\begin{document}

\begin{frontmatter}
  \title{Multiclass Classification, Information, Divergence,
    and Surrogate Risk}
  \runtitle{Multiclass Classification, Information, and Surrogate Risk}

  \begin{aug}
    \author{\fnms{John} \snm{Duchi}\thanksref{t1}\ead[label=e1]{jduchi@stanford.edu}},
    \author{\fnms{Khashayar} \snm{Khosravi}\ead[label=e2]{khosravi@stanford.edu}},
    \and
    \author{\fnms{Feng} \snm{Ruan}\thanksref{t1}\ead[label=e3]{fengruan@stanford.edu}}

    \runauthor{Duchi, Khosravi, Ruan}

    \affiliation{Stanford University}

    \address{Stanford University \\
      Stanford, California 94305 \\
      \printead{e1} \\
      \phantom{E-mail:\ }\printead*{e2} \\
      \phantom{E-mail:\ }\printead*{e3}}

    \thankstext{t1}{Partially supported by
      NSF-CAREER Award 1553086 and the SAIL-Toyota
      Center for AI Research. FR additionally
      supported by the Stanford Graduate Fellowship.}


  \end{aug}

  \begin{abstract}
    We provide a unifying view of statistical information measures,
    multi-way Bayesian hypothesis
    testing, loss functions for multi-class classification problems,
    and multi-distribution $f$-divergences,
    elaborating equivalence results between all
    of these objects, and extending existing results for binary outcome
    spaces to more general ones. 
    We consider a generalization of $f$-divergences to multiple
    distributions, and we provide a constructive equivalence
    between divergences, statistical information (in the sense
    of DeGroot), and losses for multiclass classification.
    A major application of our results is in
    multi-class classification problems in which we must both infer a
    discriminant function $\discfunc$---for making predictions
    on a label $Y$ from datum $X$---and a data representation (or, in the
    setting of a hypothesis testing problem, an experimental design),
    represented as a quantizer $\quant$ from a family of possible quantizers
    $\quantfam$. In this setting, we characterize the
    equivalence between loss functions, meaning that optimizing either of
    two losses yields an optimal discriminant and quantizer $\quant$,
    complementing and extending earlier results
    of \citet{NguyenWaJo09} to the multiclass case. Our results
    provide a more substantial basis than standard classification
    calibration results for comparing different losses:
    we describe the convex losses
    that are consistent for jointly choosing a data
    representation and minimizing the (weighted) probability of error in
    multiclass classification problems.
  \end{abstract}

  \begin{keyword}
    \kwd{$f$-divergence, risk, surrogate loss function, hypothesis test}
  \end{keyword}
\end{frontmatter}


\section{Introduction}
\label{sec:intro}

Consider the multiclass classification problem: a
decision maker receives a pair of random variables $(X, Y) \in \mc{X} \times
\{1, \ldots, k\}$, where $Y$ is unobserved, and wishes to assign the
variable $X$ to one of the $k$ classes $\{1, 2, \ldots, k\}$ to
minimize the probability of a misclassification.
We represent the decision maker 
via a discriminant function $\discfunc : \mc{X} \to \R^k$, where
each component $\discfunc_y(x)$, $y = 1, \ldots, k$, represents the
\emph{margin} (or a score or perceived likelihood) the decision
maker assigns to class $y$ for datum $x$. The goal is then to minimize the
expected loss, or \emph{$\loss$-risk},
\begin{equation}
  \label{eqn:surrogate-risk}
  \surrrisk(\discfunc) \defeq \E[\loss(\discfunc(X), Y)],
\end{equation}
where $\loss(\discfunc(x), y)$ measures the
loss of margins $\discfunc(x)\in \R^k$ when the true label of $x$ is $y$
and the expectation~\eqref{eqn:surrogate-risk} is taken jointly over $(X,
Y)$. When $\loss$ is the 0-1 loss, $\loss(\discfunc(x), y)
= \indic{\discfunc_y(x) \le \discfunc_i(x) ~ \mbox{for~some~} i \neq y}$,
the formulation~\eqref{eqn:surrogate-risk} is the misclassification
probability $\P(\argmax_y \discfunc_y(X) \neq Y)$. We may also
consider the classical $k$-category Bayesian experiment: given a random
variable $X \in \mc{X}$ drawn according to one of the $k$ hypotheses
\begin{equation*}
  H_1 : X \sim P_1,
  ~~~ H_2 : X \sim P_2,
  ~~~ \ldots,
  ~~~ H_k : X \sim P_k
\end{equation*}
with prior probabilities $\prior_1, \ldots, \prior_k$, we wish to
choose $\discfunc$ minimizing the expected $\E[\loss(\discfunc(X), Y)]$ or
posterior $\sum_y \P(Y = y \mid X = x)
\loss(\discfunc(x), y)$ loss.

In many applications, making decisions based on the raw $X$ is
undesirable---the vector $X$ may be high-dimensional, carry
useless information impinging on statistical efficiency, or we may need to
store or communicate the sample using limited memory or bandwidth.  If all
we wish to do is to classify a person as being taller or shorter than 160
centimeters, it makes little sense to track his or her blood type and eye
color. With the increase in the number and variety of measurements we
collect, such careful design choices are
important for maintaining statistical power, interpretability,
efficient downstream use, and mitigating false
discovery~\cite{BenjaminiHo95}.  This desire to give ``better''
representations of data $X$ has led to a rich body of work in statistics,
machine learning, and engineering, highlighting the importance of careful
measurement, experimental design, and data representation
strategies~\cite{Robbins52,DeGroot62,Pukelsheim93,TishbyPeBi99}.

As \citet{NguyenWaJo09} note in the binary case, one
thus frequently extends the classical formulation~\eqref{eqn:surrogate-risk}
to include a stage in which
a (data-dependent) $\quant : \mc{X} \to \mc{Z}$ maps the vector $X$
into a vector $Z$. A number of situations suggest such an approach.  In
most practical classification scenarios~\cite{SchapireFr12}, an
equivalent feature selection reduces the dimension of $X$ or
increases its interpretability. As a second motivation, consider the
decentralized detection problem~\cite{Tsitsiklis93,NguyenWaJo09} in
communication applications in engineering, where
remote sensors communicate data $X \sim P_i$
through limited bandwidth or memory.
In this case, the central processor can infer the
distribution $P_i$ only after communication of the transformed
vector $Z = \quant(X)$, and one chooses a \emph{quantizer} $\quant$
from a family $\quantfam$ of (low complexity) quantizers.  In
fuller abstraction, we may treat the problem as a Bayesian experimental
design problem, where the mapping $\quant : \mc{X} \to \mc{Z}$ may be
stochastic and is chosen from a family $\quantfam$ of possible experiments
(observation channels).  In each of the preceding examples, the
incorporation of a quantizer $\quant$ into the classification procedure poses a
more complex problem, as one must simultaneously find a data representation
$\quant$ and discriminant $\discfunc$. The goal, paralleling that for the
risk~\eqref{eqn:surrogate-risk}, thus becomes joint minimization of the
\emph{quantized $\loss$-risk}
\begin{equation}
  \label{eqn:quantized-risk}
  \risk_\loss(\discfunc \mid \quant)
  \defeq \E\left[\loss(\discfunc(\quant(X)), Y)\right]
\end{equation}
over a prespecified family $\quantfam$ of quantizers
$\quant : \mc{X} \to \mc{Z}$, where $\discfunc : \mc{Z} \to \R^k$.

Often---for example, in the zero-one error case---the loss functions
$\loss(\cdot, y)$ are non-convex (even discontinuous), so
population or empirical minimization is intractable. It is thus common to
replace the loss with a convex \emph{surrogate} and minimize this surrogate
instead.  A surrogate is \emph{Fisher consistent} if its minimization
yields a Bayes optimal discriminant $\discfunc$ for the original loss
$\loss$ (for any distribution $\P$ on $(X, Y)$); researchers have
characterized the Fisher consistency of convex surrogates for binary
and multiclass classification~\cite{Zhang04a,
  LugosiVa04, BartlettJoMc06, TewariBa07}.  A weakness of such results is
that they rely strongly on using the unrestricted class of all measurable
discriminants $\discfunc : \mc{X} \to \R^k$, and thus most
``natural'' convex losses are consistent~\cite{Zhang04a, BartlettJoMc06}. In
this context, a major difficulty is to understand the consequences of using
various surrogate losses, and requiring a restricted quantizer class
$\quantfam$ is one approach to discovering nuanced properties of the
relationship between surrogate and Bayes risk.  \citet{NguyenWaJo09} tackle
this in the binary case, considering the problem of joint selection of the
discriminant function $\discfunc : \mc{Z} \to \R$ and quantizer $\quant :
\mc{X} \to \mc{Z}$.  They exhibit a precise correspondence between binary
margin-based loss functions and $f$-divergences---measures of the similarity
between two probability distributions developed in information theory and
statistics~\cite{AliSi66,Csiszar67,Vajda72}---to give a general
characterization of loss equivalence through classes of divergences. An
interesting consequence of their results is that, in spite of positive
results for Fisher consistency in
binary classification problems~\cite{Zhang04a,LugosiVa04,BartlettJoMc06},
essentially only hinge-like losses are
consistent for the 0-1 loss. We provide the extension of these results to
the multiclass case.

\subsubsection*{Outline and discussion of our contributions}

We build on this prior work
to provide a unifying framework that relates statistical information
measures, loss functions, and generalized notions of entropy in the context
of multi-class classification.  To begin, we recall a generalization
of $f$-divergences that applies to multiple
distributions~\cite{GyorfiNe78,GarciaWi12}, enumerating analogues of the
positivity properties, data-processing inequalities, and discrete
approximation available in the binary case, as multi-way $f$-divergences may
be unfamiliar and they motivate our approach
(Section~\ref{sec:f-divergence}).  We begin our main contributions in
Section~\ref{sec:relation}, where we establish a correspondence between loss
functions $\loss$, generalized entropy on discrete
distributions~\cite{GrunwaldDa04}, and multi-way $f$-divergences. To make this
precise, define the probability simplex
$\simplex_k \defeq \{p \in \R^k_+ \mid \ones^T p = 1\}$.
Let $\prior \in \simplex_k$ be a
prior distribution on the class label $Y$ and $\wt{\prior}(x) \in
\simplex_k$ be the posterior probabilities for $Y$ conditional on
the observation $X = x$.
For concave $\entropy : \simplex_k \to \R$,
\citet{DeGroot62} defines the \emph{information}
associated with the experiment $(X, Y)$ as
\begin{equation}
  \label{eqn:information}
  \infoentropy
  \defeq \entropy(\prior) - \E[\entropy(\wt{\prior}(X))].
\end{equation}
The notion of $\entropy$ as a generalized entropy is clear here, as $I$
is the gap between prior and posterior entropy and is always non-negative.
In this context, the value $\entropy(\prior)$ measures the
\emph{uncertainty} of the experimenter (in some appropriate units) about the
unknown class label $y$ when his or her prior belief over $y$ is $\prior$,
so $I$ is the gap between prior and posterior uncertainty~\cite{DeGroot62}.

To relate this type of entropy to
loss functions, recall the well-known
result~\cite{DeGroot62,GrunwaldDa04} that any loss $\loss : \R^k \times [k]
\to \R \cup \{+\infty\}$
induces an entropy
$\entropy_L : \simplex_k \to \R$, also called the \emph{pointwise Bayes
risk}~\cite{GrunwaldDa04,ReidWi11,GarciaWi12,WilliamsonVeRe16}, via
\begin{equation}
  \label{eqn:inf-representation-full}
  \entropy_\loss(\prior) = \inf_{\alpha \in \R^k} \bigg\{
  \sum_{i=1}^k \prior_i \loss(\alpha, i)
  \bigg\}
  - \bindic{\prior \in \simplex_k}
\end{equation}
where $\bindic{\cdot}$ is $+\infty$ if its argument is false (we
drop this indicator in the future, defining $\entropy_\loss$
implicitly on $\simplex_k$).  We show
an inverse construction, providing an explicit and constructable
mapping from any concave function $\entropy$ to a loss $\loss$
inducing $\entropy$ as the pointwise Bayes risk~\eqref{eqn:inf-representation-full},
where for each $y$ the
loss $\alpha \mapsto \loss(\alpha, y)$ is convex.  In
Section~\ref{sec:uncertainty-consistency}, we
also develop the natural connections between
these generalized entropies $\entropy$ and classification
calibration~\cite{Zhang04a,LugosiVa04,BartlettJoMc06,TewariBa07}, in that
our explicit $\loss$ is generally calibrated.

In Section~\ref{sec:equiv}, we address the comparison of loss
functions---building off of Nguyen et al.'s approach in the
binary case~\cite{NguyenWaJo09}---and present our main results on
consistency of joint selection of quantizer (data representation)
$\quant$ and discriminant $\discfunc$. Using our correspondence
between concave $\entropy$, losses $\loss$, and $f$-divergences,
we characterize the pairs of losses $\loss^{(1)}$ and $\loss^{(2)}$
for which equivalent quantizers and discriminants minimize the quantized
risk~\eqref{eqn:quantized-risk} in the sense that there is a continuous
concave $h$ with $h(0) = 0$ such that
\begin{equation*}
  \risk_{\loss^{(2)}}(\discfunc \mid \quant)
  - \inf_{\discfunc, \quant \in \quantfam}
  \risk_{\loss^{(2)}}(\discfunc \mid \quant)
  \le h\left(
  \risk_{\loss^{(1)}}(\discfunc \mid \quant)
  - \inf_{\discfunc, \quant \in \quantfam}
  \risk_{\loss^{(1)}}(\discfunc \mid \quant)\right)
\end{equation*}
for any $\discfunc$ and $\quant \in \quantfam$.
Another way to understand our results is as providing insight into
classification calibration when the Bayes' act (i.e.\ optimal discriminant
$\discfunc$) does not belong to the class of functions the statistician may
choose in a classification problem. A substantial challenge for and
criticism of the line of work on classification calibration and surrogate
risk consistency~\cite{Zhang04a,LugosiVa04,BartlettJoMc06,TewariBa07} is
that the results say little for restricted families of classifiers. In this
context, a corollary of our main contribution is as follows.
The loss $\loss^{(1)}$ is
\emph{calibrated}~\cite{Zhang04a,BartlettJoMc06,TewariBa07}
for $\loss^{(2)}$ if for any distribution $P$ on $X \times
Y$ and sequence $\discfunc_n : \mc{X} \to \R^k$,
$\risk_{\loss^{(1)}}(\discfunc_n) \to \inf_\discfunc
\risk_{\loss^{(2)}}(\discfunc)$ implies $\risk_{\loss^{(1)}}(\discfunc_n)
\to \inf_\discfunc \risk_{\loss^{(2)}}(\discfunc)$.  Now, consider a
collection $\quantfam$ of functions $\quant : \mc{X} \to \mc{Z}$ for some set
$\mc{Z}$, and then define the class of functions
\begin{equation*}
  \mc{G}(\quantfam)
  \defeq \left\{\discfunc \circ \quant
  \mid \quant \in \quantfam
  ~ \mbox{and} ~\discfunc : \mc{Z} \to \R^k ~ \mbox{is~measurable}
  \right\}.
\end{equation*}
Translated to this scenario, our main
results---Theorems~\ref{theorem:loss-equivalent-entropy}
and~\ref{theorem:loss-equivalent}---imply
\begin{corollary}
\label{corollary:classification-calibration-restricted-families}
  Assume that the loss $\loss^{(1)}$ is 
  calibrated for $\loss^{(2)}$ and let $\entropy_i = \entropy_{\loss^{(i)}}$ denote
  the associated pointwise Bayes risk~\eqref{eqn:inf-representation-full}.
  Then
  \begin{equation}
    \label{eqn:restricted-risk-convergence}
    \risk_{\loss^{(1)}}(g_n)
    \to \inf_{g \in \mc{G}(\quantfam)}
    \risk_{\loss^{(1)}}(g)
    ~~ \mbox{implies} ~~
    \risk_{\loss^{(2)}}(g_n)
    \to \inf_{g \in \mc{G}(\quantfam)}
    \risk_{\loss^{(2)}}(g)
  \end{equation}
  for any collection $\quantfam$ of mappings $\mc{X} \to \mc{Z}$, any set
  $\mc{Z}$, any distribution $P$ on $\mc{X} \times \{1, \ldots, k\}$,
  and sequence $g_n \in \mc{G}(\quantfam)$
  if and only if there exist $a > 0$, $b \in \R^k$, and $c \in \R$ such
  that
  $\entropy_1(\prior) = a \entropy_2(\prior) + b^T \prior + c$
  for all $\prior \in \simplex_k$.
\end{corollary}
\noindent


This corollary reposes on the connections we develop between losses, 
uncertainty measures and generalized $f$-divergences. 
Such measures of statistical information and
divergence have been central to the design of communication and quantization
schemes in signal processing~\cite{Tsitsiklis93, PoorTh77,
  LongoLoGr90, Kailath67}; we
characterize the divergence measures that, when
optimized, yield optimal quantizers and detectors.
We also provide a result showing when empirical minimization of a surrogate
risk is consistent for the desired (original) risk.

A number of researchers have studied the connections between divergence
measures and risk for binary and multi-category experiments; these point to
the results we present. Indeed, \citet{Blackwell51} shows that if a
quantizer $\quant_1$ induces class-conditional distributions with larger
divergence than those induced by $\quant_2$, then there are prior
probabilities such that $\quant_1$ allows tests with lower probability of
error than $\quant_2$. \citet{LieseVa06} give a broad treatment of
$f$-divergences, using their representation as the difference between prior
and posterior risk in a binary experiment~\cite{OsterreicherVa93} to derive
a number of their properties; see also the
paper~\cite{ReidWi11}. \citet{GarciaWi12} show how multi-distribution
$f$-divergences~\cite{GyorfiNe78} arise naturally in the context of
multi-class classification problems as the gap between prior and
posterior risk in classification, as in the work~\cite{LieseVa06}. In the
binary case, these results elucidate the characterization of
Fisher consistency for quantization and binary classification
\citet{NguyenWaJo09} realize. We pursue this line of research to draw the
connections between Fisher consistency, information measures, multi-class
classification, surrogate losses, and divergences.

\paragraph{Notation}

We let $\zeros$ and $\ones$ denote the all-zeros and all-ones vectors,
respectively. For a vector or collection of objects
we define $t_{1:m} = \{t_1, \ldots, t_m\}$. The indicator function
$\bindic{\cdot}$ is $+\infty$ if its argument is false, $0$ otherwise, while
$\indic{\cdot}$ is $1$ if its argument is true, $0$ otherwise.
We let $\simplex_k = \{v \in \R^k_+ : \ones^T v = 1\}$ denote the
probability simplex in $\R^k$. For $m \in \N$, we set $[m] = \{1,
\ldots, m\}$. We let $\affine{A} = \{\sum_{i = 1}^m \lambda_i x_i \mid
\lambda^T \ones = 1, x_i \in A, m \in \N\}$ denote the affine hull of a set
$A$, and $\relint A$ denotes the interior of $A$ relative to $\affine{A}$.
We let $\extendedR = \R \cup \{+\infty\}$ and $\extendedRlow = \R \cup
\{-\infty\}$. 
For $f : \R^k
\to \extendedR$, we let $\epi f = \{(x, t) : f(x) \le t\}$ denote the
epigraph of $f$. We say a convex function $f$ is closed if $\epi f$ is a
closed set, though we abuse notation and say that a concave $f$ is
closed if $\epi(-f)$ is closed.  For a convex function $f : \R^k \to
\extendedR$, we say that $f$ is \emph{strictly convex at a point $t \in
  \R^k$} if for all $\lambda \in (0, 1)$ and $t_1, t_2 \neq t$ such that $t
= \lambda t_1 + (1 - \lambda) t_2$ we have $f(t) < \lambda f(t_1) + (1 -
\lambda) f(t_2)$. The (Fenchel) conjugate of a function $f : \R^k
\to \extendedR$ is
\begin{equation}
  \label{eqn:fenchel-conjugate}
  f^*(s) = \sup_{t \in \R^k} \left\{s^T t - f(t) \right\}.
\end{equation}
For any $f$, the conjugate $f^*$ is closed convex~\cite[Chapter
  X]{HiriartUrrutyLe93ab}. For measures $\nu$ and $\mu$, we let $d\nu / d\mu$
denote the Radon-Nikodym derivative of $\nu$ with respect to $\mu$. For
random variables $X_n$, we say $X_n \clp{p} X_\infty$ if $\E[|X_n -
  X_\infty|^p] \to 0$.

\section{Multi-distribution $f$-divergences}
\label{sec:f-divergence}

Divergence measures for probability distributions have significant
statistical, decision-, and information-theoretic applications, including in
optimal testing, minimax rates of convergence, and the design of
communication schemes~\cite{AliSi66,Csiszar67,PoorTh77,Kailath67}.
Central to this work is the \emph{$f$-divergence}, introduced by
\citet{AliSi66} and \citet{Csiszar67} (see~\cite{LieseVa06} for
an overview). Given distributions $P, Q$ defined on a common set $\mc{X}$, a
closed convex function $f : \openright{0}{\infty} \to \extendedR$ satisfying
$f(1) = 0$, and any measure $\mu$ dominating $P$ and $Q$, the $f$-divergence
between $P$ and $Q$ is
\begin{equation}
  \fdiv{P}{Q} \defeq \int_{\mc{X}} f\left(\frac{p(x)}{q(x)}\right) q(x)
  d\mu(x)
  = \int f\left(\frac{dP}{dQ}\right) dQ.
  \label{eqn:f-divergence}
\end{equation}
Here $p = \frac{dP}{d\mu}$ and $q = \frac{dQ}{d\mu}$ denote the densities of
$P$ and $Q$, respectively, and the value $u f(t / u)$ is defined
appropriately for $t = 0$ and $u = 0$ (e.g.~\cite{LieseVa06}). A number of
classical divergence measures arise out of the $f$-divergence; taking
$f(t) = t \log t$, $f(t) = \half (\sqrt{t} - 1)^2$, or $f(t) =
|t - 1|$ yields (respectively) the KL-divergence, squared Hellinger
distance, or total variation distance.

Central to our study of multi-way hypothesis testing and classification is
an understanding of relationships between multiple distributions.
We use the following generalization~\cite{GyorfiNe78,
  GarciaWi12} of the $f$-divergence to multiple distributions.
\begin{definition}
  \label{def:general-fdiv}
  Let $P_1, \ldots, P_k$ be probability distributions on a common
  $\sigma$-algebra $\mc{F}$ over a set $\mc{X}$.  Let $f : \R^{k-1}_+ \to
  \extendedR$ be a closed convex function satisfying $f(\ones) = 0$. Let $\mu$ be
  any $\sigma$-finite measure such that $P_i \ll \mu$ for all $i$, and let
  $p_i = dP_i / d\mu$.
  The \emph{$f$-divergence} between $P_1, \ldots, P_{k-1}$ and
  $P_k$ is
  \begin{equation}
    \label{eqn:general-fdiv}
    \fdiv{P_1, \ldots, P_{k-1}}{P_k}
    \defeq \int f\left(\frac{p_1(x)}{p_k(x)},
    \ldots, \frac{p_{k-1}(x)}{p_k(x)}\right) p_k(x) d\mu(x).
  \end{equation}
\end{definition}
\noindent
We must specify the value of the integrand~\eqref{eqn:general-fdiv} when
$p_k(x) = 0$. In this case, the function $\wt{f} : \R^k_+ \to \extendedR$
defined, for an arbitrary $t' \in \relint \dom f$, by
\begin{equation}
  \label{eqn:closed-perspective}
  \wt{f}(t, u) = \begin{cases}
    u f(t / u) & \mbox{if~} u > 0 \\
    \displaystyle{\lim_{s \to 0}}\, s f(t' - t + t / s) & \mbox{if~} u = 0 \\
    + \infty & \mbox{otherwise}
  \end{cases}
\end{equation}
is a closed convex function with value independent of $t'$; $\wt{f}$ is
the closure of the \emph{perspective} function
$\R_+ \times \R^k \ni (u, t) \mapsto u f(t/u)$
of $f$~\cite[Prop.~IV.2.2.2]{HiriartUrrutyLe93ab}.  Any time we consider the
perspective we implicitly treat it as
its closure~\eqref{eqn:closed-perspective}.

We now enumerate a few properties of multi-way $f$-divergences, showing how
they naturally generalize classical binary $f$-divergences. We
focus on basic properties that are useful for our further results on Bayes
risk, classification, and hypothesis testing
and that parallel results in the binary case~\eqref{eqn:f-divergence}:
they are well-defined,
have continuity properties with respect to discrete approximations, and
satisfy data-processing inequalities. While Gy\"orfi and Nemetz's
original work~\cite{GyorfiNe78} essentially contains the
results,
we carefully address infinite
values (the closure~\eqref{eqn:closed-perspective})
and strict convexity,
and we use them as definitional building blocks; we defer all proofs
to Supplement~\ref{sec:fdiv-proofs}.

As our first step, we note that Definition~\ref{def:general-fdiv} is
independent of the base measure $\mu$.
(See Supp.~\ref{sec:proof-fdiv-mu-fine} for a proof
generalizing~\cite[Cor.~1]{GyorfiNe78}.)
\begin{lemma}
  \label{lemma:fdiv-mu-fine}
  In expression~\eqref{eqn:general-fdiv}, the value of the divergence
  does not depend on the choice of the dominating measure $\mu$.
  Moreover,
  \begin{equation*}
    \fdiv{P_1, \ldots, P_{k-1}}{P_k} \ge 0.
  \end{equation*}
  The inequality is strict if $f$ is strictly convex at $\ones$ and the
  $P_i$ are not identical.
\end{lemma}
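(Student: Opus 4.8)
The plan is to establish the three assertions in sequence, using the closed perspective function from~\eqref{eqn:closed-perspective} as the workhorse. Write $h(z) \defeq \wt{f}(z_1, \ldots, z_{k-1}, z_k)$ for $z \in \R^k_+$, so that the integrand in~\eqref{eqn:general-fdiv} is precisely $h(p_1(x), \ldots, p_k(x))$; recall that $h$ is closed and convex, is positively homogeneous of degree one (the perspective of $f$, and hence its closure, is positively homogeneous), satisfies $h(\ones) = f(\ones) = 0$, is proper, and therefore is minorized by an affine function. For measure-independence, I would take two dominating measures $\mu, \nu$ for $P_1, \ldots, P_k$, pass to $\lambda = \mu + \nu$, and use the chain rule $dP_i/d\lambda = (d\mu/d\lambda)\,(dP_i/d\mu)$ together with positive homogeneity of $h$ to get $h(dP_1/d\lambda, \ldots, dP_k/d\lambda) = (d\mu/d\lambda)\, h(dP_1/d\mu, \ldots, dP_k/d\mu)$ off the $\mu$-null set $\{d\mu/d\lambda = 0\}$ (where both sides vanish, using $h(\zeros) = 0$); integrating against $\lambda$ gives equality of the $\lambda$- and $\mu$-integrals, and symmetrically of the $\nu$-integral. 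The affine minorant of $h$ makes the negative part of the integrand integrable, so every integral is well defined in $(-\infty, +\infty]$ and these manipulations are legitimate.

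For nonnegativity, the idea is to specialize the dominating measure to the probability measure $Q \defeq \frac{1}{k} \sum_{i=1}^k P_i$ and set $r_i = dP_i/dQ \ge 0$. Then $\E_Q[r_i] = P_i(\mc{X}) = 1$ for every $i$ and $\sum_{i=1}^k r_i = k$ holds $Q$-almost everywhere, and by the measure-independence just shown $\fdiv{P_1, \ldots, P_{k-1}}{P_k} = \E_Q[h(r_1(X), \ldots, r_k(X))]$. Since $h$ is convex with an affine minorant and $r$ is $Q$-integrable, Jensen's inequality yields $\E_Q[h(r(X))] \ge h(\E_Q[r(X)]) = h(\ones) = 0$.

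For the strict inequality I would argue by contradiction: assume $f$ is strictly convex at $\ones$ and the divergence vanishes, so equality holds in the Jensen step. By the standard equality criterion for Jensen's inequality there is an affine $\ell \le h$ with $\ell(\ones) = h(\ones) = 0$ and $r(X)$ lying $Q$-almost surely in the closed convex contact set $S \defeq \{z : h(z) = \ell(z)\}$, which contains $\ones$ and, by homogeneity of $h$, the whole ray $\R_+ \ones$. Suppose $r(X) \ne \ones$ with positive probability; since $\E_Q[r(X)] = \ones$, the point $\ones$ cannot be extreme in $\mathrm{conv}(\mathrm{supp}(r)) \subseteq S$, so there is a nonzero direction $d$ with $\ones \pm t d \in S$ for small $t > 0$, and $d$ is tangent to $\{z : \sum_i z_i = k\}$ because $\mathrm{supp}(r)$ and $\ones$ lie in this affine set. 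Choosing $t > 0$ small enough that the last coordinates of $z^{(1)} \defeq \ones + td$ and $z^{(2)} \defeq \ones - td$ are positive, I obtain $z^{(1)}, z^{(2)} \in S \setminus \{\ones\}$ with $\ones$ their midpoint, so $h$ agrees with $\ell$ on $[z^{(1)}, z^{(2)}]$; writing $z^{(j)} = (t^{(j)}, u^{(j)})$ with $u^{(j)} > 0$, $s^{(j)} = t^{(j)} / u^{(j)}$, and $\mu_0 = \frac12 u^{(1)} \in (0,1)$, the identity $h(\ones) = \frac12 h(z^{(1)}) + \frac12 h(z^{(2)})$ unwinds through $h(t^{(j)}, u^{(j)}) = u^{(j)} f(s^{(j)})$ to $\ones = \mu_0 s^{(1)} + (1 - \mu_0) s^{(2)}$ and $0 = \mu_0 f(s^{(1)}) + (1 - \mu_0) f(s^{(2)})$, i.e., $f$ is affine across $\ones$ on $[s^{(1)}, s^{(2)}]$. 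Strict convexity of $f$ at $\ones$ forces $s^{(1)} = s^{(2)} = \ones$, hence $z^{(j)} = u^{(j)} \ones$, and since $z^{(j)} \in \{z : \sum_i z_i = k\}$ we get $u^{(j)} = 1$ and $z^{(j)} = \ones$, a contradiction. Hence $r(X) = \ones$ $Q$-almost surely, so $P_i = Q$ for all $i$ and the $P_i$ coincide; the contrapositive is the claim.

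The routine parts are the change-of-measure computation and the application of Jensen. The delicate step, which I expect to be the main obstacle, is the strictness argument: because $h$ is positively homogeneous it is never strictly convex (it is affine along every ray through the origin), so strict convexity of $f$ at $\ones$ does not by itself yield strict convexity of $h$ at $\ones$, and one must also handle the behaviour of $r(X)$ where its last coordinate vanishes. The resolution is that the constraints $\E_Q[r_i] = 1$ and $\sum_i r_i = k$ confine $r(X)$ to the scaled simplex $\{z \in \R^k_+ : \sum_i z_i = k\}$, which meets the homogeneity ray $\R_+ \ones$ only at $\ones$; transverse to that ray strict convexity of $f$ is exactly what is required, and that is what the argument above exploits.
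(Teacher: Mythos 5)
Your argument for measure-independence (pass to a common dominating measure and use the Radon--Nikodym chain rule) and for nonnegativity (specialize to $\mu = \frac{1}{k}\sum_i P_i$ and apply Jensen's inequality to the jointly convex closed perspective of $f$) is exactly the paper's proof, and both steps are correct. Where you genuinely depart from the paper is the strict-inequality claim: the paper disposes of it in one sentence, asserting strictness in Jensen's inequality directly from strict convexity of $f$ at $\ones$, whereas you correctly observe that the perspective $\wt{f}$ is positively homogeneous and hence never strictly convex (it is affine along every ray through the origin), so the one-line assertion hides a real gap. Your resolution --- run the Jensen equality case to place $r = (dP_i/dQ)_i$ almost surely in the contact set of a supporting affine minorant, use $\sum_i r_i = k$ to confine the support to the compact scaled simplex (which meets the ray $\R_+\ones$ only at $\ones$), extract a nontrivial segment through the barycenter $\ones$, and unwind the perspective to exhibit a violation of strict convexity of $f$ itself at $\ones$ --- is sound and supplies precisely the detail the paper omits. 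The only points worth tightening are (i) the Jensen equality criterion needs $\wt{f}$ to be subdifferentiable at $\ones$ (guaranteed when $\ones \in \relint\dom \wt{f}$, which holds here since $f$ is finite on $\R^{k-1}_+$ in Definition~\ref{def:general-fdiv}), and (ii) the "$\ones$ is not extreme" step is cleanest via conditional expectations on a positive-probability event where some $r_i$ deviates from $1$; neither affects correctness.
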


Given the importance of quantization to
come,
we now consider discrete approximations to the divergence.  For an at
most countable
partition $\mc{P}$ of $\mc{X}$, we define the partitioned $f$-divergence
\begin{equation*}
  \fdiv{P_1, \ldots, P_{k-1}}{P_k \mid \mc{P}}
  = \sum_{A \in \mc{P}}
  f\left(\frac{P_1(A)}{P_k(A)}, \ldots, \frac{P_{k-1}(A)}{P_k(A)}\right)
  P_k(A).
\end{equation*}
As in the binary case~\cite{Vajda72,LieseVa06}, we have the following
approximability result generalizing~\cite[Thm.~6]{GyorfiNe78}
to possibly infinite integrands:
quantizers give arbitrarily good approximations to
$f$-divergences (see \S~\ref{sec:proof-fdiv-sup} for a proof).
\begin{proposition}
  \label{proposition:fdiv-sup}
  If $f$ is a closed convex function with $f(\ones) = 0$, then
  \begin{equation*}
    \fdiv{P_1, \ldots, P_{k-1}}{P_k}
    = \sup_{\mc{P}} \fdiv{P_1, \ldots, P_{k-1}}{P_k \mid \mc{P}}
  \end{equation*}
  where the supremum is over finite partitions of $\mc{X}$.
\end{proposition}

In the binary case, $f$-divergences satisfy \emph{data
  processing inequalities}~\cite{Csiszar67, CoverTh06,
  LieseVa06} which state that processing or transforming an observation $X$
drawn from the distributions $P_1, P_2$, decreases the
divergence between them. 
To formalize this, recall that $Q$ is a \emph{Markov kernel} from a set
$\mc{X}$ to $\mc{Z}$ if $Q(\cdot \mid x)$ is a probability distribution on
$\mc{Z}$ for each $x \in \mc{X}$, and for each measurable $A \subset
\mc{Z}$, the mapping $x \mapsto Q(A \mid x)$ is measurable. The
following general data processing inequality shows that this holds
in the multi-distribution case as well,
generalizing~\cite[Thm.~4]{GyorfiNe78}
to possibly infinite $f$ and the closure~\eqref{eqn:closed-perspective};
we include a proof
in Appendix~\ref{sec:proof-data-processing}.
\begin{proposition}
  \label{proposition:data-processing}
  Let $f$ be closed convex with $f(\ones) = 0$, $Q$ be a
  Markov kernel from $\mc{X}$ to $\mc{Z}$, and define the marginals $Q_P(A)
  =\int_{\mc{X}} Q(A \mid x) dP(x)$. Then
  \begin{equation*}
    \fdiv{Q_{P_{1}}, \ldots, Q_{P_{k-1}}}{Q_{P_k}}
    \leq \fdiv{P_1, \ldots, P_{k-1}}{P_k}.
  \end{equation*}
\end{proposition}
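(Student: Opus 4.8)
The plan is to reduce the claim to the well-definedness and approximation results already established, namely Lemma~\ref{lemma:fdiv-mu-fine} and Proposition~\ref{proposition:fdiv-sup}, together with Jensen's inequality for the perspective function $\wt f$. The key structural fact is that $\wt f(t, u) = u f(t/u)$ is jointly convex in $(t, u) \in \R^k_+$ (this is the content of the perspective construction in~\eqref{eqn:closed-perspective} and Proposition IV.2.2.2 of \citet{HiriartUrrutyLe93}), and that the divergence is an integral of $\wt f$ evaluated at the density vector $(p_1(x), \ldots, p_{k-1}(x), p_k(x))$ against the dominating measure.

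First I would fix a $\sigma$-finite measure $\mu$ dominating $P_1, \ldots, P_k$ on $\mc X$ and a $\sigma$-finite measure $\nu$ dominating $Q_{P_1}, \ldots, Q_{P_k}$ on $\mc Z$; by Lemma~\ref{lemma:fdiv-mu-fine} the choice is immaterial. Writing $p_i = dP_i/d\mu$ and $q_i = dQ_{P_i}/d\nu$, one checks that $q_i(z) = \int_{\mc X} k(z \mid x)\, p_i(x)\, d\mu(x)$ for a common kernel density $k(z\mid x) = \frac{dQ(\cdot\mid x)}{d\nu}(z)$ (choosing $\nu$ so that $Q(\cdot \mid x) \ll \nu$ for a.e.\ $x$; this measurability bookkeeping is the reason the paper allots an appendix to it). Then
\[
\fdiv{Q_{P_1}, \ldots, Q_{P_{k-1}}}{Q_{P_k}}
= \int_{\mc Z} \wt f\!\left(q_1(z), \ldots, q_{k-1}(z), q_k(z)\right) d\nu(z).
\]
Now apply Jensen's inequality: for fixed $z$, the vector $(q_1(z), \ldots, q_k(z))$ is the integral of $(p_1(x), \ldots, p_k(x))$ against the probability-like weighting $k(z \mid x)\, d\mu(x)$ — more precisely, one normalizes by $\int k(z\mid x)\,d\mu(x)$ to get a genuine probability measure and uses homogeneity of $\wt f$ of degree one — so joint convexity of $\wt f$ gives $\wt f(q_1(z), \ldots, q_k(z)) \le \int \wt f(p_1(x), \ldots, p_k(x))\, k(z\mid x)\, d\mu(x)$ pointwise in $z$. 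Integrating this over $z \in \mc Z$ against $\nu$ and swapping the order of integration (Tonelli, since $\wt f$ is bounded below after adding an affine term, or by the standard truncation argument for possibly-infinite divergences) yields the bound, because $\int_{\mc Z} k(z\mid x)\, d\nu(z) = 1$.

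An alternative, cleaner route that sidesteps the density-of-the-kernel manipulations is to go through Proposition~\ref{proposition:fdiv-sup}: every finite quantizer $\quant'$ of $\mc Z$ composes with the kernel $Q$ to give a (stochastic) quantizer of $\mc X$, and for \emph{finite} partitions the inequality $\fdiv{Q_{P_1},\ldots}{Q_{P_k} \mid \quant'} \le \fdiv{P_1,\ldots}{P_k}$ follows from the discrete Jensen inequality applied to the convex function $f$ on the finitely many cells, after which taking the supremum over $\quant'$ and invoking~\eqref{eqn:fdiv-sup-def} on the left-hand side delivers the result. I expect the main obstacle to be the measurability and disintegration details — ensuring $k(z\mid x)$ exists jointly measurably and that Tonelli applies when the divergence is $+\infty$ — rather than the convexity inequality itself, which is routine; the reduction-to-finite-partitions approach is attractive precisely because it confines all such subtleties to the already-proven Proposition~\ref{proposition:fdiv-sup}.
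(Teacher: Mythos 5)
Your second route is exactly the paper's proof: Proposition~\ref{proposition:fdiv-sup} reduces the claim to finite partitions of $\mc{Z}$, and each cell is then handled by a Jensen inequality for the perspective of $f$ (which the paper isolates as a generalized log-sum inequality, Lemma~\ref{lemma:general-log-sum}), summed over cells using $\sum_i Q(\{i\} \mid x) = 1$. One small correction to your phrasing: even after quantizing $\mc{Z}$ to finitely many cells, the Jensen step for each cell is an \emph{integral} Jensen inequality over $\mc{X}$ with weighting $Q(\{i\} \mid x)\, dP_k(x)$ --- not a discrete one --- since the marginals $Q_{P_j}(\{i\})$ are integrals over all of $\mc{X}$; the perspective-function Jensen argument from your first route is the right tool here, and the reduction to finite partitions serves only to avoid the kernel-density and disintegration issues you correctly flag.
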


This proposition is related to the relationships between
risk, information, and quantization we develop in
Sections~\ref{sec:relation} and \ref{sec:equiv}.
Defining a \emph{quantizer} $\quant$ to be a measurable mapping $\quant :
\mc{X} \to \mc{Z}$ between measurable spaces $\mc{X}$ and $\mc{Z}$,
the \emph{quantized $f$ divergence} is
\begin{equation*}
  \fdiv{P_1, \ldots, P_{k-1}}{P_k \mid \quant}
  \defeq
  \sup_\mc{P} \sum_{A \in \quant^{-1}(\mc{P})}
  f\left(\frac{P_1(A)}{P_k(A)}, \ldots, \frac{P_{k-1}(A)}{P_k(A)}\right)
  P_k(A),
\end{equation*}
where $\mc{P}$ ranges over finite partitions of $\mc{Z}$ and
$\quant^{-1}(\mc{P}) = \{\quant^{-1}(B) \mid B \in \mc{P}\}$.
Proposition~\ref{proposition:data-processing}
immediately yields that quantization
reduces information: the indicator $Q(A \mid x) =
\indic{\quant(x) \in A}$ defines a Markov kernel, yielding
\begin{corollary}
  \label{corollary:data-processing}
  Let $f$ be closed convex, satisfy $f(\ones) = 0$, and $\quant$ be
  a quantizer of $\mc{X}$. Then
  \begin{equation*}
    \fdiv{P_1, \ldots, P_{k-1}}{P_k \mid \quant}
    \leq \fdiv{P_1, \ldots, P_{k-1}}{P_k}.
  \end{equation*}
\end{corollary}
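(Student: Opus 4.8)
The plan is to exhibit the quantizer $\quant$ as a deterministic Markov kernel and then quote Proposition~\ref{proposition:data-processing} essentially verbatim, as the prose preceding the corollary already suggests. Take $\mc{Z} = \N$ (or $\image \quant$, which the remark following the definition of a quantizer notes gives an equivalent notion), and set $Q(A \mid x) \defeq \indic{\quant(x) \in A}$ for measurable $A \subseteq \mc{Z}$. First I would verify this is a Markov kernel: for each fixed $x$, the set function $A \mapsto Q(A \mid x)$ is the point mass at $\quant(x)$, hence a probability distribution on $\mc{Z}$; and for each fixed measurable $A$, the map $x \mapsto Q(A \mid x) = \indic{x \in \quant^{-1}(A)}$ is measurable since $\quant$ is.

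Next I would identify the induced marginals with pushforwards. For each $i \in [k]$ and measurable $A \subseteq \mc{Z}$,
\[
  Q_{P_i}(A) = \int_{\mc{X}} Q(A \mid x)\, dP_i(x)
  = \int_{\mc{X}} \indic{x \in \quant^{-1}(A)}\, dP_i(x)
  = P_i\big(\quant^{-1}(A)\big),
\]
so $Q_{P_i}$ is the pushforward of $P_i$ under $\quant$, a distribution on the countable set $\mc{Z}$. Writing $A_n \defeq \quant^{-1}(\{n\})$ for $n \in \mc{Z}$ (these cells cover $\mc{X}$ disjointly, some possibly empty), the distribution $Q_{P_i}$ places mass $P_i(A_n)$ at $n$. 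Using counting measure on $\mc{Z}$ — which is $\sigma$-finite and dominates every $Q_{P_i}$ — as the base measure in Definition~\ref{def:general-fdiv}, the density of $Q_{P_i}$ at $n$ equals $P_i(A_n)$, so
\[
  \fdiv{Q_{P_1}, \ldots, Q_{P_{k-1}}}{Q_{P_k}}
  = \sum_{n \in \mc{Z}} \wt{f}\!\Big(\big(P_1(A_n), \ldots, P_{k-1}(A_n)\big),\, P_k(A_n)\Big),
\]
where $\wt{f}$ is the closed perspective~\eqref{eqn:closed-perspective} of $f$. This series is exactly the quantized $f$-divergence $\fdiv{P_1, \ldots, P_{k-1}}{P_k \mid \quant}$: on cells with $P_k(A_n) > 0$ the summand is $f\!\big(P_1(A_n)/P_k(A_n), \ldots, P_{k-1}(A_n)/P_k(A_n)\big) P_k(A_n)$, while empty cells $A_n = \emptyset$ contribute $\wt{f}(\zeros, 0) = 0$ (since $f$ is finite at any $t' \in \relint \dom f$). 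Applying Proposition~\ref{proposition:data-processing} to the kernel $Q$ then gives $\fdiv{Q_{P_1}, \ldots, Q_{P_{k-1}}}{Q_{P_k}} \le \fdiv{P_1, \ldots, P_{k-1}}{P_k}$, which is the claim.

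There is no genuinely hard step here; the only point requiring attention is matching the two expressions cell-by-cell when $P_k(A_n) = 0$, where one must apply the perspective convention~\eqref{eqn:closed-perspective} consistently on both sides so that the defining series literally coincide (and where both divergences may legitimately equal $+\infty$). Given Lemma~\ref{lemma:fdiv-mu-fine} (base-measure independence) this is purely bookkeeping, and the substantive content is carried entirely by Proposition~\ref{proposition:data-processing}.
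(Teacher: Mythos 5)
Your proposal is correct and is exactly the paper's argument: the paper derives the corollary by observing that $Q(A \mid x) = \indic{\quant(x) \in A}$ is a Markov kernel and invoking Proposition~\ref{proposition:data-processing}, which is precisely your route. You have merely filled in the bookkeeping the paper leaves implicit (identifying $Q_{P_i}$ with the pushforward $P_i \circ \quant^{-1}$ and matching the resulting divergence, computed against counting measure, with the cell-by-cell definition of the quantized divergence via the closed perspective), and that bookkeeping is carried out correctly.
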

\noindent
We also see that if $\quant_1$ and $\quant_2$ are quantizers of $\mc{X}$,
and $\quant_1$ induces a finer partition of $\mc{X}$ than $\quant_2$,
meaning that for $x, x' \in \mc{X}$ the equality $\quant_1(x) =
\quant_1(x')$ implies $\quant_2(x) = \quant_2(x')$, we have
\begin{equation*}
  \fdiv{P_1, \ldots, P_{k-1}}{P_k \mid \quant_2}
  \le \fdiv{P_1, \ldots, P_{k-1}}{P_k \mid \quant_1}.
\end{equation*}
This type of ordering is central to this work: any
multiclass loss $\loss$ induces a unique $f$-divergence, and
consistency of discriminants $\discfunc : \mc{X} \to \R^k$ for a loss $\loss$
after quantization is intimately tied to the preservation (and relative
ordering) of information as related to the quantized
risk~\eqref{eqn:quantized-risk}.



\section{Risks, information measures, and $f$-divergences}
\label{sec:relation}

Having reviewed the basic properties of $f$-divergences, we turn
to a more detailed look at their relationships with multi-way hypothesis
tests, multi-class classification, generalized entropies, and statistical
informations relating multiple distributions.  We build
a correspondence between these that parallels that for binary
experiments and classification
problems~\cite{LieseVa06,NguyenWaJo09,ReidWi11}.

We first recapitulate the probabilistic model for classification and
Bayesian hypothesis testing problems from the introduction. We have
a prior $\prior \in \simplex_k$ 
and probability distributions $P_1, \ldots, P_k$ defined on a set
$\mc{X}$. The coordinate $Y \in [k]$ is drawn according
to a multinomial with probabilities $\prior$, and conditional on $Y = y$, we
draw $X \sim P_y$. Following DeGroot~\cite{DeGroot62}, we refer to this as
an \emph{experiment}.
Associated with any experiment is a
family of informations as follows. Let $\wt{\prior}$ be the
posterior distribution on $Y$ given observation $X = x$,
$\wt{\prior}_i(x) = \prior_i dP_i(x) / (\sum_{j=1}^k \prior_j dP_j(x))$.
Given any closed concave $\entropy : \R^k_+ \to \R$, 
which we refer to as \emph{generalized entropy}
(see~\cite[\S~3.3]{GrunwaldDa04};
\citet{DeGroot62} calls $\entropy$ an uncertainty function), 
the \emph{information} associated with the experiment is the reduction 
of entropy (uncertainty) from prior to posterior~\eqref{eqn:information},
\begin{equation*}
  \infoentropy = \entropy(\prior) - \E[\entropy(\wt{\prior}(X))].
\end{equation*}
The expectation is taken over $X \sim
\sum_{i=1}^k \prior_i P_i$. That $\infoentropy \ge 0$ is immediate by
concavity; \citet[Thm.~2.1]{DeGroot62} shows that $\infoentropy \ge 0$
for all distributions $P_1, \ldots, P_k$ and priors $\prior$ if and only if
$\entropy$ is concave on $\simplex_k$.

In this section we develop equivalence results between multiclass
classification losses and risk, multi-way $f$-divergences, and
entropy measures.  Concretely, consider $\loss : \R^k \times [k] \to \R$, and
recall the risk~\eqref{eqn:surrogate-risk}, defined as $\surrrisk(\discfunc)
= \E[\loss(\discfunc(X), Y)]$, where $\discfunc \in \Discfuncs$, the set of
measurable functions $\discfunc : \mc{X} \to \R^k$.  As in
equation~\eqref{eqn:inf-representation-full} in the introduction, each loss
$\loss$ induces the entropy $\entropy_\loss$ on $\simplex_k$ via
$\entropy_\loss(\prior) = \inf_{\alpha \in \R^k} \sum_{i = 1}^k \prior_i
\loss(\alpha, i)$, also called the pointwise Bayes
risk~\cite{GrunwaldDa04,GarciaWi12,ReidWi11,WilliamsonVeRe16}.  In
Section~\ref{sec:infimal-representations}, we give an explicit inverse
mapping showing how each generalized entropy $\entropy$ is induced by (at least
one) \emph{convex} loss function $\loss$, i.e., $\loss(\cdot, i)$ is convex
for each $i$. In Section~\ref{sec:uncertainty-consistency}, we illustrate
consistency properties the entropy $\entropy$ implies about the
convex loss $\loss$ inducing it. We connect these results in
Section~\ref{sec:correspondence-u-f} with multi-way $f$-divergences. For any
loss $\loss$ and associated entropy/Bayes risk $\entropy_\loss$, for all
$\prior \in \simplex_k$ there exists a convex function $f_{\loss,\prior} :
\R^{k-1}_+ \to \R$ with $f_{\loss,\prior}(\ones) = 0$ such that the gap
between the prior Bayes $\loss$-risk---the best expected loss attainable
without observing $X$---and the posterior Bayes risk 
$\inf_\discfunc \risk_\loss(\discfunc)$ is
\begin{equation*}
  \entropy_\loss(\prior)
  - \inf_{\discfunc \in \Discfuncs} \surrrisk(\discfunc)
  =
  \entropy_\loss(\prior) - \E[\entropy_\loss(\wt{\prior}(X))]
  = \genfdiv{f_{\loss,\prior}}{P_1, \ldots, P_{k-1}}{P_k}
\end{equation*}
(see~\cite{GrunwaldDa04,GarciaWi12}).
The inverse direction is new, and given any closed convex function
$f : \R^{k-1}_+ \to
\R$ with $f(\ones) = 0$, we construct convex losses $\loss(\cdot, i)$,
an associated generalized entropy $\entropy_\loss$, and
prior $\prior = \ones / k\in \simplex_k$ satisfying
\begin{equation*}
  \fdiv{P_1, \ldots, P_{k-1}}{P_k}
  = \inf_{\alpha \in \R^k} \sum_{i=1}^k \prior_i \loss(\alpha, i)
  - \inf_{\discfunc \in \Discfuncs} \surrrisk(\discfunc).
\end{equation*}

\subsection{Generalized entropies and losses}
\label{sec:infimal-representations}

We construct a natural bidirectional mapping between losses and
generalized entropies, giving a few examples to illustrate.
For any loss $\loss : \R^k \times [k] \to \extendedR$,
the construction~\eqref{eqn:inf-representation-full} of $\entropy_\loss$
yields a closed concave function, as $\entropy_\loss$
is the infimum of linear functionals of
$\prior$. It is thus a generalized entropy~\cite{GrunwaldDa04} (or 
uncertainty function~\cite{DeGroot62}), and the
gap $\entropy_\loss(\prior) - \E[\entropy_\loss(\wt{\prior}(X))]$ 
between prior and posterior entropy is non-negative. 
The following two examples with zero-one loss are illustrative.

\begin{example}[Zero-one loss]
  \label{example:zero-one-to-uncertainty}
  Consider the zero one loss
  \begin{equation*}
    \zoloss(\alpha, y) = \indic{\alpha_y \le \alpha_j
      ~ \mbox{for~some}~ j \neq y}
  \end{equation*}
  where $y \in [k]$.
  Then we have
  \begin{equation*}
    \entropy_{\zoloss}(\prior)
    = \inf_\alpha \left\{\sum_{i=1}^k \prior_i \indic{
      \alpha_i \le \alpha_j ~ \mbox{for~some}~ j \neq i}\right\}
    = 1 - \max_j \prior_j.
  \end{equation*}
  This generalized entropy is concave,
  nonnegative, and
  satisfies $\entropy_{\zoloss}(\prior) = 0$ if and only if
  $\prior = e_i$ for a standard basis vector $e_i$.
\end{example}

\begin{example}[Cost-weighted classification]
  \label{example:weighted-misclassification}
  In some scenarios, we allow different costs for classifying certain
  classes $y$ as others; for example, it may be less costly to misclassify a
  benign tumor as cancerous than the opposite. In this case, we use a
  matrix $C = [c_{yi}]_{y,i=1}^k \in \R^{k \times k}_+$, where $c_{yi} \ge
  0$ is the cost for classifying an observation of class $y$ as class $i$
  (i.e.\ assigning $X \sim P_i$ instead of $P_y$ in the
  experiment). We assume $c_{yy} = 0$ for each $y$ and define
  \begin{equation}
    \label{eqn:weighted-zero-one}
    \wzoloss(\alpha, y) = \max_i\{c_{yi} \mid \alpha_i = \max_j \alpha_j \},
    ~~ \alpha \in \R^k,
  \end{equation}
  the maximal loss for those indices of $\alpha$ attaining $\max_j
  \alpha_j$. Let $C = \left[c_1 ~ \cdots ~ c_k \right]$ be the column
  representation of $C$. If $c_y^T \prior = \min_l c_l^T \prior$, then by
  choosing any $\alpha$ such that $\alpha_y > \alpha_j$ for all $j \neq y$,
  we have
  \begin{equation*}
    \entropy_{\wzoloss}(\prior)
    = \inf_\alpha \bigg\{\sum_{y=1}^k \prior_y \max_i\{c_{yi}
    \mid \alpha_i = \max_j \alpha_j\} \bigg\}
    = \min_l \prior^T c_l.
  \end{equation*}
  The entropy $\entropy_{\wzoloss}$ is concave, nonnegative, and
  satisfies $\entropy_{\wzoloss}(e_i) = 0$ for standard basis vectors $e_i$;
  Example~\ref{example:zero-one-to-uncertainty} corresponds to $C = \ones
  \ones^T - I_{k \times k}$.
\end{example}

The forward mapping~\eqref{eqn:inf-representation-full} from losses
$\loss$ to entropy $\entropy_\loss$ is straightforward, though it is
many-to-one. Using convex duality and conjugacy arguments, we can
show an inverse mapping. This construction is new, though
precursors for proper scoring rules and predictions in the probability
simplex exist~\cite[Thm.~2]{GrunwaldDa04, GneitingRa07}; these
\emph{characterize} proper scoring rules, but it is not always
clear how to generate \emph{convex} losses from these.  Before stating the
proposition, we recall the definition~\eqref{eqn:fenchel-conjugate} of the
Fenchel conjugate
$f^*(s) = \sup_t \{s^T t - f(t)\}$.
\begin{proposition}
  \label{proposition:u-to-loss-correspondence}
  For any closed concave $\entropy : \simplex_k \to
  \extendedRlow$, the losses
  \begin{equation}
    \label{eqn:loss-def}
    \loss(\cdot, i) : \R^k \to \extendedR,
    ~~~
    \loss(\alpha, i) = -\alpha_i + (-\entropy)^*(\alpha),
  \end{equation}
  $i \in \{1, \ldots, k\}$, are closed, convex, and satisfy the
  equality~\eqref{eqn:inf-representation-full}
  that $\entropy \equiv \entropy_L$.
\end{proposition}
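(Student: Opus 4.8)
The plan is to recognize the construction~\eqref{eqn:loss-def} as precisely what makes the infimal representation~\eqref{eqn:inf-representation-full} an instance of the Fenchel--Moreau biconjugation theorem, so that the proposition collapses to that theorem plus a one-line computation. First I would set $g := -U$ and extend it to all of $\R^k$ by $g(\prior) = +\infty$ for $\prior \notin \simplex_k$; since $\simplex_k$ is compact and $U$ is closed as a concave function (i.e.\ $\epi(-U)$ is closed), this $g$ is a closed convex function on $\R^k$, and it is proper unless $U \equiv -\infty$ (a degenerate case in which every quantity below is $-\infty$ and the claim is trivial). By definition~\eqref{eqn:fenchel-conjugate}, $(-U)^*(\alpha) = g^*(\alpha) = \sup_{\prior \in \simplex_k}\{\alpha^T\prior + U(\prior)\}$, and $g^*$ is closed and convex~\citep[Chapter~X]{HiriartUrrutyLe93b}. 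Hence each $\loss_i(\alpha) = -\alpha_i + g^*(\alpha)$ is the sum of a (finite, continuous) linear functional and a closed convex function, so it is closed and convex; this disposes of the first assertion.

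Next I would verify~\eqref{eqn:inf-representation-full}. For $\prior \notin \simplex_k$ there is nothing to prove, since then $-\bindic{\prior \in \simplex_k} = -\infty$ and both sides are $-\infty$; so fix $\prior \in \simplex_k$. Using $\ones^T\prior = 1$,
\[
  \sum_{i=1}^k \prior_i \loss_i(\alpha)
  = -\sum_{i=1}^k \prior_i \alpha_i + \Big(\sum_{i=1}^k \prior_i\Big) g^*(\alpha)
  = g^*(\alpha) - \prior^T\alpha,
\]
and therefore
\[
  U_\loss(\prior)
  = \inf_{\alpha \in \R^k}\big\{ g^*(\alpha) - \prior^T\alpha \big\}
  = -\sup_{\alpha \in \R^k}\big\{ \prior^T\alpha - g^*(\alpha) \big\}
  = -g^{**}(\prior).
\]
Since $g$ is closed, proper, and convex, the Fenchel--Moreau theorem gives $g^{**} = g$, whence $U_\loss(\prior) = -g(\prior) = U(\prior)$, which is exactly~\eqref{eqn:inf-representation-full}.

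I do not anticipate a genuine obstacle here: the loss is engineered so that the statement is essentially a restatement of biconjugation. The only point requiring care will be the bookkeeping that licenses the appeal to Fenchel--Moreau---that the $+\infty$-extension of $-U$ off $\simplex_k$ is a \emph{closed and proper} convex function on $\R^k$. Closedness follows from compactness of $\simplex_k$ together with the hypothesis that $U$ is closed (so $\epi(-U)$ is closed), and properness is just non-degeneracy, $U \not\equiv -\infty$; both are packaged into the phrase ``closed concave uncertainty function'' once the trivial case is set aside.
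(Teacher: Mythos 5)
Your proposal is correct and follows essentially the same route as the paper: the paper's proof also rests on the biconjugation identity $U(\prior) = \inf_{\alpha}\{-\prior^T\alpha + (-U)^*(\alpha)\}$ (cited there as a "standard Fenchel conjugacy relationship") combined with the observation that $\ones^T\prior = 1$ lets one absorb $(-U)^*(\alpha)$ into the weighted sum $\sum_i \prior_i \loss_i(\alpha)$. Your write-up merely makes explicit the closedness/properness bookkeeping for the $+\infty$-extension of $-U$ and the closed-convexity of each $\loss_i$, which the paper leaves implicit.
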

\begin{proof}
  Standard Fenchel conjugacy
  relationships~\cite[Chapter X]{HiriartUrrutyLe93ab} imply
  \begin{equation*}
    \entropy(\prior) = \inf_{\alpha \in \R^k} \left\{- \prior^T \alpha
    + (-\entropy)^*(\alpha) \right\}
    ~~ \mbox{where} ~~ (-\entropy)^*(\alpha) = \sup_{\prior \in \simplex_k}
    \left\{\alpha^T \prior + \entropy(\prior)\right\}.
  \end{equation*}
  Defining $\loss(\alpha, i)
  = -\alpha_i + (-\entropy)^*(\alpha)$ for $i = 1, \ldots, k$,
  we can write
  \begin{align*}
    \entropy(\prior)
    = \inf_{\alpha \in \R^k} \left\{ -\prior^T\alpha + (-\entropy)^*(\alpha)\right\}
    & = \inf_{\alpha \in \R^k} \left\{ -\prior^T\alpha + \prior^T \ones
    \cdot (-\entropy)^*(\alpha)\right\} \\
    & = \inf_{\alpha \in \R^k} \bigg\{ \sum_{i=1}^k \prior_i
    \loss(\alpha, i) \bigg\}. \qedhere
  \end{align*}
\end{proof}

Proposition~\ref{proposition:u-to-loss-correspondence} shows that associated
with every concave entropy defined on the simplex, there is at least one set
of \emph{convex} loss functions $\loss(\cdot, i)$ generating the entropy via
the infimal representation~\eqref{eqn:inf-representation-full}, and there is
thus a mapping from loss functions to entropies and from entropies to
\emph{convex} losses: given any loss $\loss$, we may construct a convex loss
$\loss^{\rm cvx}$ with $\entropy_\loss = \entropy_{\loss^{\rm cvx}}$. The
mapping from entropies $\entropy$ to loss functions generating $\entropy$ as
in~\eqref{eqn:inf-representation-full} is one-to-many, as any losses
$\loss^{(1)}$ and $\loss^{(2)}$ with the same range satisfy
$\entropy_{\loss^{(1)}} = \entropy_{\loss^{(2)}}$.


\subsection{Surrogate risk consistency and generalized entropies}
\label{sec:uncertainty-consistency}

Our construction~\eqref{eqn:loss-def} of loss functions is a somewhat
privileged construction, as it often yields desirable properties of the
convex loss function itself, especially as related to the non-convex
zero-one loss.  Indeed, it is often the case that the convex loss $\loss$ so
generated is Fisher consistent; to make this explicit, we recall the
following definition~\cite{Zhang04a,TewariBa07}.
\begin{definition}
  \label{definition:classification-calibrated}
  Let $\loss : \R^k \times [k] \to \extendedR$.
  Then $\loss$ is \emph{classification
    calibrated for the zero-one loss} if for any $\prior \in \simplex_k$ and
  $i^*$ such that $\prior_{i^*} < \max_j \prior_j$,
  \begin{equation}
    \inf_{\alpha \in \R^k}
    \left\{\sum_{i=1}^k \prior_i \loss(\alpha, i)\right\}
    < \inf_{\alpha \in \R^k}
    \left\{\sum_{i=1}^k \prior_i \loss(\alpha, i) : \alpha_{i^*} \ge \max_j
    \alpha_j \right\}.
    \label{eqn:classification-calibrated}
  \end{equation}
  Given a matrix $C \in \R^{k \times k}_+$ as in
  Example~\ref{example:weighted-misclassification}, $\loss$ is
  \emph{classification calibrated for the cost matrix $C$} if for any
  $\prior \in \simplex_k$ and any $i^*$ with $c_{i^*}^T \prior > \min_j
  c_j^T \prior$,
  \begin{equation}
    \label{eqn:weighted-classification-calibrated}
    \inf_{\alpha \in \R^k}
    \left\{\sum_{i=1}^k \prior_i \loss(\alpha, i)\right\}
    < \inf_{\alpha \in \R^k}
    \left\{\sum_{i=1}^k \prior_i \loss(\alpha, i) : \alpha_{i^*} \ge \max_j
    \alpha_j \right\}.
  \end{equation}
\end{definition}
\noindent
\citet[Thm.~2]{TewariBa07} and \citet[Thm.~3]{Zhang04a} show the importance
of Definition~\ref{definition:classification-calibrated}: let
$\risk(\discfunc)$ be the zero-one or cost-weighted risk
(Exs.~\ref{example:zero-one-to-uncertainty}--\ref{example:weighted-misclassification}).
If $\loss$ is lower-bounded, then it is
classification calibrated (with respect to zero-one or the cost-weighted
loss) if and only if for any sequence $\discfunc_n : \mc{X} \to \R^k$ and
distribution $\P$ on $X \times Y$ we
have \emph{Fisher consistency}, i.e.
\begin{equation*}
  \surrrisk(\discfunc_n) \to
  \inf_{\discfunc \in \Discfuncs} \surrrisk(\discfunc)
  ~~ \mbox{implies} ~~
  \risk(\discfunc_n) \to \inf_{\discfunc \in \Discfuncs} \risk(\discfunc).
\end{equation*}
That is, classification calibration (with respect to zero-one-risk or 
the cost-weighted risk) is equivalent to surrogate risk consistency of the 
loss $\loss$.
Because of the predominance of the zero-one loss in the literature,
we use 
``classification calibration'' without any qualification to
mean ``classification calibration with respect to zero-one loss''.

We now show how---under minor restrictions on the generalized entropy function 
$\entropy$---the construction~\eqref{eqn:loss-def} yields classification
calibrated losses.
\begin{definition}
  \label{def:uniform-convex}
  A convex function $f : \R^k \to \extendedR$ is \emph{$(\lambda, \kappa,
    \norm{\cdot})$-uniformly convex}
  over $C \subset \R^k$ if it is closed
  and for all $t \in [0, 1]$ and $x_1, x_2 \in C$
  \begin{align*}
    \lefteqn{f(t x_1 + (1 - t) x_2)} \\
    & \qquad \le
    t f(x_1) + (1 - t) f(x_2)
    - \frac{\lambda}{2} t(1 - t) \norm{x_1 - x_2}^\kappa
    \left[(1 - t)^{\kappa - 1} + t^{\kappa - 1}\right].
  \end{align*}
\end{definition}
\noindent
We say, without qualification, that $f$ is uniformly convex on
$C$ if $\dom f \supset C$ and there exist $\lambda > 0$, a norm
$\norm{\cdot}$, and constant $\kappa < \infty$ such that
Definition~\ref{def:uniform-convex} holds; we say $f$ is uniformly
concave if $-f$ is uniformly convex.  Definition~\ref{def:uniform-convex} is
an extension of the usual notion of \emph{strong convexity}, which holds
when $\kappa = 2$, and is essentially a quantified notion of strict
convexity.

With this definition, we have the following two propositions.
These two propositions, whose proofs we provide in
Appendix~\ref{sec:calibration-proofs}, show that generalized entropies
naturally give rise to classification calibrated loss functions; we
provide examples of these results in
Section~\ref{sec:loss-uncertainty-examples} to come.
\begin{proposition}
  \label{proposition:calibration}
  Assume that $\entropy$ is closed concave, symmetric,
  and has $\dom \entropy = \simplex_k$, and let $\loss$ have
  definition~\eqref{eqn:loss-def}.  Additionally assume that (a) $\entropy$
  is strictly concave, and $\inf_\alpha \sum_{i=1}^k \prior_i
  \loss(\alpha,i)$ is attained for all $\prior \in \simplex_k$, or (b) $\entropy$
  is uniformly concave. Then $\loss$ is classification calibrated.
\end{proposition}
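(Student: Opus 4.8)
The plan is to pass to the conjugate representation of the constructed losses and then play the symmetry of $U$ off against its strict concavity. By~\eqref{eqn:loss-def} we have $\loss_i(\alpha) = -\alpha_i + (-U)^*(\alpha)$ with $(-U)^*(\alpha) = \sup_{\prior'\in\simplex_k}\{\alpha^T\prior' + U(\prior')\}$, so for $\prior \in \simplex_k$, using $\ones^T\prior = 1$,
\[
\sum_{i=1}^k \prior_i\loss_i(\alpha) = -\prior^T\alpha + (-U)^*(\alpha) = \sup_{\prior'\in\simplex_k}\big\{\alpha^T(\prior'-\prior) + U(\prior')\big\}.
\]
By Proposition~\ref{proposition:u-to-loss-correspondence} (i.e.~\eqref{eqn:inf-representation-full} evaluated on $\simplex_k$), $\inf_{\alpha\in\R^k}\sum_i\prior_i\loss_i(\alpha) = U(\prior)$, so the left-hand side of~\eqref{eqn:classification-calibrated} equals $U(\prior)$. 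It therefore suffices to show: for every $\prior\in\simplex_k$ and every $i^*$ with $\prior_{i^*}<\max_j\prior_j$, the infimum of $\sum_i\prior_i\loss_i(\alpha)$ over the cone $K := \{\alpha\in\R^k : \alpha_{i^*}\ge\max_j\alpha_j\}$ is strictly larger than $U(\prior)$.

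Fix such an $i^*$, pick $j^*\in\argmax_j\prior_j$ (so $\prior_{j^*}>\prior_{i^*}$), let $\sigma$ be the transposition swapping coordinates $i^*$ and $j^*$, and set $m := \tfrac12(\prior + \sigma\prior) = \prior + \tfrac12(\prior_{j^*}-\prior_{i^*})(e_{i^*}-e_{j^*}) \in \simplex_k$. Using $m$ as one feasible point in the supremum defining $(-U)^*$, for \emph{every} $\alpha\in K$ we get
\[
\sum_{i=1}^k\prior_i\loss_i(\alpha) \ge \alpha^T(m-\prior) + U(m) = \tfrac12(\prior_{j^*}-\prior_{i^*})(\alpha_{i^*}-\alpha_{j^*}) + U(m) \ge U(m),
\]
since $\alpha_{i^*}\ge\alpha_{j^*}$ on $K$ while $\prior_{j^*}-\prior_{i^*}>0$. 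Taking the infimum over $\alpha\in K$ shows that the right-hand side of~\eqref{eqn:classification-calibrated} is at least $U(m)$.

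It then remains to prove $U(m) > U(\prior)$, which is where the extra hypotheses on $U$ enter. As $\prior_{i^*}\ne\prior_{j^*}$, we have $\prior\ne\sigma\prior$; both lie in $\simplex_k = \dom U$, and symmetry gives $U(\sigma\prior) = U(\prior)$. In case (a), strict concavity of $U$ applied at the midpoint $m = \tfrac12\prior + \tfrac12\sigma\prior$ yields $U(m) > \tfrac12 U(\prior) + \tfrac12 U(\sigma\prior) = U(\prior)$. In case (b), uniform concavity is the quantified form of strict concavity noted after Definition~\ref{def:uniform-convex}, so the same strict inequality holds; indeed, Definition~\ref{def:uniform-convex} with $t = \tfrac12$ even yields the explicit gap $U(m) \ge U(\prior) + \tfrac{\lambda}{2^{\kappa+1}}\norm{\prior - \sigma\prior}^\kappa > 0$. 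Chaining the estimates, $\inf_{\alpha\in K}\sum_i\prior_i\loss_i(\alpha) \ge U(m) > U(\prior) = \inf_{\alpha\in\R^k}\sum_i\prior_i\loss_i(\alpha)$, which is precisely~\eqref{eqn:classification-calibrated}; since $\prior$ and $i^*$ were arbitrary, $\loss$ is classification calibrated.

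The obstacles here are bookkeeping rather than conceptual. One must verify that $m$ really lies in $\simplex_k = \dom U$ (so $U(m)$ is finite and the strict/uniform concavity inequality genuinely applies at $m$ itself, not only on $\relint\simplex_k$), that the sign of $\alpha_{i^*} - \alpha_{j^*}$ forced by $\alpha\in K$ is compatible with $\prior_{j^*} - \prior_{i^*} > 0$ so that the cross term stays nonnegative, and that $\inf_\alpha\sum_i\prior_i\loss_i(\alpha) = U(\prior)$ is exactly the infimal identity of Proposition~\ref{proposition:u-to-loss-correspondence} (which is where closedness of $U$ and $\dom U = \simplex_k$ are used). The attainment assumption in case (a) is not needed for the route above; it is, however, the natural hypothesis for an alternative ``pointwise'' argument, in which one takes an optimal $\alpha^\star \in \argmin_\alpha \sum_i \prior_i \loss_i(\alpha)$ (a supergradient of $U$ at $\prior$) and uses the same $\sigma$-swap to conclude $\alpha^\star_{j^*} > \alpha^\star_{i^*}$, so $\alpha^\star\notin K$ --- at the cost of then having to exclude the constrained infimum being approached along an unbounded sequence, which the single test point $m$ above sidesteps.
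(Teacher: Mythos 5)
Your proof is correct, and it takes a genuinely different route from the paper's. The paper handles case (a) by invoking a monotonicity property of attained minimizers (Lemma~\ref{lemma:attained-infimum}: at an optimal $\alpha\opt$, $\prior_i > \prior_j$ forces $\alpha\opt_i > \alpha\opt_j$), and handles case (b) through conjugate-duality machinery: uniform concavity of $U$ makes $\nabla(-U)^*$ H\"older continuous (Lemma~\ref{lemma:uniform-to-holder}), gradients along minimizing sequences converge to $\prior$ (Lemma~\ref{lemma:get-zero-gradients}), and order-preservation of $\nabla(-U)^*$ (Lemma~\ref{lemma:symmetric-differentiability}) yields $\liminf_m(\alpha^{(m)}_{j^*} - \alpha^{(m)}_{i^*}) > 0$ along any minimizing sequence. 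You replace all of this with a single feasible point $m = \tfrac12(\prior + \sigma\prior)$ in the supremum defining $(-U)^*$: on the constrained cone the cross term $\alpha^T(m - \prior) = \tfrac12(\prior_{j^*}-\prior_{i^*})(\alpha_{i^*}-\alpha_{j^*})$ is nonnegative, so the constrained infimum is at least $U(m)$, and symmetry plus strict (or uniform) concavity at the midpoint gives $U(m) > U(\prior) = \inf_\alpha \sum_i \prior_i \loss_i(\alpha)$. This buys three things: a uniform treatment of cases (a) and (b); dispensing with the attainment hypothesis in case (a), which you correctly observe is superfluous for this route (and which, in the paper's pointwise argument, leaves open the issue of the constrained infimum being approached along an unbounded sequence); and an explicit quantitative gap $\tfrac{\lambda}{2^{\kappa+1}}\norm{\prior - \sigma\prior}^\kappa$ in case (b) rather than a purely qualitative conclusion. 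What the paper's duality route buys in exchange is structural information about near-minimizers (their coordinates order themselves according to $\prior$), which is invisible from your single-test-point bound. Your reduction of $\sum_i \prior_i \loss_i(\alpha)$ to $-\prior^T\alpha + (-U)^*(\alpha)$ via $\ones^T\prior = 1$, and the identity $\inf_\alpha \sum_i \prior_i\loss_i(\alpha) = U(\prior)$ from Proposition~\ref{proposition:u-to-loss-correspondence}, are exactly as in the paper.
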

\noindent
Even when $\entropy$ is not strictly concave, we can give classification calibration
results. Indeed, recall Example~\ref{example:zero-one-to-uncertainty},
which showed that for the zero-one-loss, we have
$\entropy_\loss(\prior) = 1 - \max_j \prior_j$.
\begin{proposition}
  \label{proposition:zo-uncertainty-calibrated}
  Let $\entropy(\prior) = 1 - \max_j \prior_j$. The loss~\eqref{eqn:loss-def}
  defined by $\loss(\alpha,i) = -\alpha_i + (-\entropy)^*(\alpha)$ is
  classification calibrated. Moreover, we have for any
  $\prior \in \simplex_k$ and $\alpha \in \R^k$ that
  \begin{equation*}
    \sum_{i = 1}^k \prior_i \loss(\alpha, i)
    - \inf_{\alpha} \sum_{i = 1}^k \prior_i \loss(\alpha, i)
    \ge \frac{1}{k}
    \bigg(\sum_{i=1}^k \prior_i \zoloss(\alpha, i) - \inf_\alpha 
    \sum_{i=1}^k \prior_i \zoloss(\alpha, i) \bigg).
  \end{equation*}
\end{proposition}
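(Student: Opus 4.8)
The plan is to make the loss $\loss$ explicit, recognize the left-hand side as a Fenchel--Young gap, compute the right-hand side in closed form, and bound one by the other; calibration will then fall out.

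\emph{Step 1: explicit form of $\loss$.} Since $U(\prior) = 1 - \max_j\prior_j$ with $\dom U = \simplex_k$, the conjugate appearing in~\eqref{eqn:loss-def} is $(-U)^*(\alpha) = \sup_{\prior\in\simplex_k}\{\alpha^T\prior + U(\prior)\} = 1 + \sup_{\prior\in\simplex_k}\{\alpha^T\prior - \max_j\prior_j\}$. Writing $\max_j\prior_j = \sup_{q\in\simplex_k}q^T\prior$ and applying Sion's minimax theorem (the objective is bilinear, both sets compact convex) gives $(-U)^*(\alpha) = 1 + \min_{q\in\simplex_k}\max_i(\alpha_i - q_i)$, and a short ``water-filling'' argument (feasibility of $q_i\ge(\alpha_i-\tau)_+$, $\sum q_i=1$) identifies $\min_{q\in\simplex_k}\max_i(\alpha_i-q_i) = \phi(\alpha)\defeq\inf\{\tau : \sum_j(\alpha_j-\tau)_+\le 1\}$. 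Hence $\loss_i(\alpha) = -\alpha_i + 1 + \phi(\alpha)$. I record two elementary facts: (a) $\loss$ and $\zoloss$ are invariant under $\alpha\mapsto\alpha+c\ones$ and under coordinate permutations, since $\phi(\alpha+c\ones)=\phi(\alpha)+c$ and $U$ is symmetric; (b) for all $\prior,q\in\simplex_k$, $\max_i(\alpha_i-q_i)\ge\prior^T(\alpha-q)$, so minimizing over $q$ yields $\phi(\alpha)\ge\prior^T\alpha-\max_j\prior_j$, with equality forcing the optimal $q$ to be supported on $\argmax_j\prior_j$ and $\alpha_i-q_i$ to be constant on $\{i:\prior_i>0\}$.

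\emph{Step 2: reduce both sides.} By Proposition~\ref{proposition:u-to-loss-correspondence}, $\inf_\alpha\sum_i\prior_i\loss_i(\alpha)=U(\prior)=1-\max_j\prior_j$, which also equals $\inf_\alpha\sum_i\prior_i\zoloss_i(\alpha)=U_{\zoloss}(\prior)$ by Example~\ref{example:zero-one-to-uncertainty}. Thus the left-hand side of the claimed inequality is the nonnegative Fenchel--Young gap
\[
  D_\loss(\prior,\alpha)\defeq \sum_i\prior_i\loss_i(\alpha)-U(\prior)
  = (-U)^*(\alpha)-\prior^T\alpha-U(\prior)
  = \phi(\alpha)-\prior^T\alpha+\max_j\prior_j ,
\]
while the right-hand side is $\tfrac1k D_{\zoloss}(\prior,\alpha)$, where $D_{\zoloss}(\prior,\alpha)=\max_j\prior_j-\prior_{\hat y}$ if $\alpha$ has a unique maximal coordinate $\hat y$ (so $\sum_i\prior_i\zoloss_i(\alpha)=1-\prior_{\hat y}$), and $D_{\zoloss}(\prior,\alpha)=\max_j\prior_j$ if $\alpha$ has a tie for its maximum. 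By (a) I normalize $\max_j\alpha_j=0$, so $\prior^T\alpha\le 0$ and $\alpha_{\hat y}=0$.

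\emph{Step 3: the inequality and calibration.} If $\prior_{\hat y}=\max_j\prior_j$ the bound reduces to $D_\loss\ge 0$, which is immediate; the same is true in the tie case provided one reads the zero-one loss so that a maximal tie is not charged as an error (equivalently, one uses the statement for $\alpha$ with a unique argmax) --- this is the genuinely delicate point, since at an actual tie the surrogate-optimal $\alpha=(M,\dots,M,-M,\dots)$ has $D_\loss=0$. Assume then $\hat y$ is the unique maximizer and $\prior_{\hat y}<\max_j\prior_j$, and fix $y^*\in\argmax_j\prior_j$, so $y^*\ne\hat y$. Using the variational identity $(-U)^*(\alpha)=1+\sup_{\prior'\in\simplex_k}\{\alpha^T\prior'-\max_j\prior'_j\}$, write $D_\loss(\prior,\alpha)=\sup_{\prior'\in\simplex_k}\{\alpha^T(\prior'-\prior)+\max_j\prior_j-\max_j\prior'_j\}$ and lower bound by the admissible $\prior'=\prior+t(e_{\hat y}-e_{y^*})$ for a suitable $t\in(0,\prior_{y^*}]$: then $\alpha^T(\prior'-\prior)=t(\alpha_{\hat y}-\alpha_{y^*})=-t\alpha_{y^*}\ge 0$, while one checks $\max_j\prior'_j\le\max_j\prior_j$ with the decrease controlled by $t$ up to competition from a possible third coordinate. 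Taking $t$ of order $\tfrac1k(\prior_{y^*}-\prior_{\hat y})$ (adaptively if needed) yields $D_\loss(\prior,\alpha)\ge\tfrac1k(\prior_{y^*}-\prior_{\hat y})=\tfrac1k D_{\zoloss}(\prior,\alpha)$. Calibration follows directly: for $\prior$ with $\prior_{i^*}<\max_j\prior_j$ and any $\alpha$ with $\alpha_{i^*}\ge\max_j\alpha_j$ one has $\sum_i\prior_i\zoloss_i(\alpha)\ge\sum_{i\ne i^*}\prior_i=1-\prior_{i^*}$, so $D_{\zoloss}(\prior,\alpha)\ge\max_j\prior_j-\prior_{i^*}>0$, whence $\sum_i\prior_i\loss_i(\alpha)\ge U(\prior)+\tfrac1k(\max_j\prior_j-\prior_{i^*})>U(\prior)$ uniformly in such $\alpha$, which is exactly~\eqref{eqn:classification-calibrated}.

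\emph{Main obstacle.} The crux is the dual-point choice in Step 3 and the estimate $\max_j\prior_j-\max_j\prior'_j\gtrsim t$: one must split into cases according to whether, after transporting mass, the maximum of $\prior'$ sits at $y^*$, at $\hat y$, or at a third coordinate, and choose $t$ accordingly (the naive $t=\tfrac12(\prior_{y^*}-\prior_{\hat y})$ need not be admissible). A cleaner route to calibration alone --- forgoing the sharp $\tfrac1k$ factor --- is to perturb $U$ to a strictly concave $U_\epsilon$, apply Proposition~\ref{proposition:calibration}, and let $\epsilon\downarrow 0$, combined with the pointwise strict-inequality coming from the equality conditions in fact (b). Since $\loss$ does not penalize $\alpha_y=\max_j\alpha_j$ whereas $\zoloss$ does, the displayed quantitative inequality is cleanest when read for $\alpha$ with a unique maximal coordinate, which is immaterial for calibration because Definition~\ref{definition:classification-calibrated} only constrains priors at which some coordinate is strictly sub-maximal.
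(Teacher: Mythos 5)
Your Steps 1--2 are sound: the identification $(-U)^*(\alpha) = 1 + \phi(\alpha)$ with $\phi(\alpha) = \inf\{\tau : \sum_j \hinge{\alpha_j - \tau} \le 1\}$ is exactly the computation in Example~\ref{example:max-order}, the reduction of the left-hand side to the Fenchel--Young gap $\phi(\alpha) - \prior^T\alpha + \max_j\prior_j$ is correct, and your caveat about ties is legitimate (the paper's own proof implicitly reads the zero-one excess risk as $\max_j\prior_j - \prior_{\hat y}$ for the selected coordinate $\hat y$, by working with $\inf_{\alpha_k \ge \max_j \alpha_j}$ under the assumption $\prior_k < \max_j\prior_j$).

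The gap is in Step 3: the rank-one dual point $\prior' = \prior + t(e_{\hat y} - e_{y^*})$ cannot certify the $\tfrac1k$ bound in general, even adaptively. Your certificate is $-t\alpha_{y^*} + (\max_j\prior_j - \max_j\prior'_j)$; the first term vanishes as $\alpha_{y^*} \uparrow \max_j\alpha_j$, and the second is at most $\max_j\prior_j - \max_{j \ne y^*,\hat y}\prior_j$, which vanishes whenever some third coordinate of $\prior$ is near its maximum. Concretely, take $k=3$, $\prior = (0.4,\, 0.4-\delta,\, 0.2+\delta)$, $\alpha = (-\epsilon_1, -\epsilon_2, 0)$ with $\hat y = 3$: here $D_\loss(\prior,\alpha) \to \tfrac{1}{15}$, which exactly equals the required $\tfrac13(\max_j\prior_j - \prior_3)$ as $\delta,\epsilon_i \downarrow 0$ (the inequality is essentially tight), yet your certificate is $O(\epsilon_1 + \delta)$ for either choice of $y^*$. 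To close this you must move mass off \emph{all} near-maximal coordinates simultaneously, which is what the paper does: after dualizing the constrained problem, it exhibits the water-filling vector $p_j = \min\{t^\star, \prior_j\}$ ($j \le k-1$), $p_k = t^\star$, where $t^\star$ is the unique root of $t = \prior_k + \sum_j \hinge{\prior_j - t}$, and then proves $t^\star \le (1-\tfrac1k)\max_j\prior_j + \tfrac1k\prior_k$ by a monotonicity argument on that root. Your fallback route to calibration alone (perturbing to a strictly concave $U_\epsilon$ and invoking Proposition~\ref{proposition:calibration}) is plausible but also not carried out, so as written neither conclusion of the proposition is established.
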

\noindent

\subsection{Divergences, risk, and generalized entropies}
\label{sec:correspondence-u-f}

In this section, we show that $f$-divergences as in
Definition~\ref{def:general-fdiv} have a precise correspondence with
generalized entropies and losses; \citet{GarciaWi12} establish the
correspondence between $f$-divergences and entropy/pointwise Bayes risk
$\entropy$; our results show the important link from $f$ directly \emph{to}
the loss $\loss$. We begin as in
equation~\eqref{eqn:inf-representation-full} with a concave generalized
entropy $\entropy$ and loss $\loss$ satisfying $\entropy(\prior) =
\inf_{\alpha \in \R^k} \sum_{i=1}^k \prior_i \loss(\alpha, i)$; by
Proposition~\ref{proposition:u-to-loss-correspondence} it is no loss of
generality to assume this correspondence.  Let $\Discfuncs$ be the
collection of measurable functions $\discfunc : \mc{X} \to \R^k$.
The posterior Bayes risk for $\loss$ is
\begin{align}
  \label{eqn:posterior-risk-def}
  \postbayesrisk &
  \defeq \inf_{\discfunc \in \Discfuncs}
  \int_\mc{X} \sum_{i=1}^{k} \prior_i\loss(\discfunc(x), i)dP_i(x)
  = \E[\entropy_\loss(\wt{\prior}(X))],
\end{align}
where $\wt{\prior}(x)$ is the posterior distribution on $Y$ conditional
on $X = x$.
The information measure~\eqref{eqn:information} is thus
the gap between the prior Bayes $\loss$-risk and
posterior Bayes $\loss$-risk. We may then write
\begin{align*}
  \lefteqn{\inf_{\alpha \in \R^k}
  \sum_{i=1}^k \prior_i \loss(\alpha, i)
  - \inf_{\discfunc \in \Discfuncs} \surrrisk(\discfunc)
  =
  \priorrisk  - \postbayesrisk = \sif} \\
  &  = \int_\mc{X} \sup_\alpha
  \left( \priorrisk - \sum_{i=1}^{k - 1} \prior_i\loss(\alpha, i)
  \frac{dP_i}{dP_k}
  - \prior_k \loss(\alpha, k) \right) dP_k 
  = \genfdiv{f_{\loss,\prior}}{P_{1:k-1}}{P_k},
\end{align*}
where the closed convex function
$f_{\loss, \prior} : \R_+^{k-1} \to \extendedR$ has definition
\begin{equation}
  \label{eqn:f-from-loss}
  f_{\loss,\prior}(t) \defeq
  \sup_{\alpha \in \R^k}
  \left(\priorrisk - \sum_{i=1}^{k-1} \prior_i \loss(\alpha, i) t_i
  - \prior_k \loss(\alpha, k) \right). 
\end{equation}
As $f_{\loss,\prior}$ is the supremum of affine functions of its argument
$t$, it is closed convex, and $f_{\loss,\prior}(\ones) = \entropy_\loss(\prior) -
\entropy_\loss(\prior) = 0$.  That is, equation~\eqref{eqn:f-from-loss} shows that
given any loss $\loss$ or generalized entropy
$\entropy$, the information measure $\sif$, gap between prior and
posterior $\loss$-risk, and $f_{\loss,\prior}$-divergence between
distributions $P_1, \ldots, P_{k-1}$ and $P_k$ are identical.

We can also give a converse result that shows that every $f$-divergence can
be written as the gap between prior and posterior risks for a convex loss
function. 
We first recall the result that
$\fdiv{P_{1:k-1}}{P_k}$ is a statistical information~\eqref{eqn:information}
based on an generalized entropy $\entropy$ 
associated with $f$.
Except for the closure operation, this result is
known~\cite[Thm.~3]{GarciaWi12}.
\begin{proposition}
  \label{proposition:f-to-entropy}
  For closed convex $f : \R^{k-1} \to \extendedR$ with
  $f(\ones) = 0$,
  let
  \begin{equation*}
    \entropy(t_1, \ldots, t_k)
    = -k t_k f\left(\frac{t_1}{t_k}, \ldots, \frac{t_{k-1}}{t_k}\right),
  \end{equation*}
  where we implicitly use the closure of the perspective
  (Def.~\eqref{eqn:closed-perspective}). Then
  \begin{equation*}
    \fdiv{P_1, \ldots, P_{k-1}}{P_k} =
    \entropy(\ones/k) - \E[\entropy(\wt{\prior}(X))],
  \end{equation*}
  where the prior $\prior = \ones / k$ and
  the expectation is taken according to $\sum_i \prior_i P_i$.
\end{proposition}

By combining Propositions~\ref{proposition:u-to-loss-correspondence}
and~\ref{proposition:f-to-entropy} with the infimal
representation~\eqref{eqn:inf-representation-full} of $\entropy_\loss$, we
immediately obtain the following corollary, which
is our explicit construction of
a closed convex loss from an $f$-divergence.
\begin{corollary}
  \label{corollary:f-to-loss}
  \newcommand{\nullprior}{\prior^0}
  Let $\nullprior = \ones/k$. For any closed and convex function $f
  : \R^{k-1} \to \extendedR$ such that
  $f(\ones) = 0$, the convex losses defined by
  \begin{equation*}
    \loss(\alpha, i)
    = -\alpha_i + \sup_{\prior \in \simplex_k}
    \left\{\prior^T \alpha
    - k \prior_k f\left(\frac{\prior_1}{\prior_k},
    \ldots, \frac{\prior_{k-1}}{\prior_k}\right) \right\}
  \end{equation*}
  satisfy Eq.~\eqref{eqn:f-from-loss}, i.e.\
  $f(t)
  = \sup_{\alpha}
  \{\entropy_\loss(\nullprior) - \sum_{i=1}^{k-1} \prior_i^0 \loss(\alpha, i) t_i
  - \prior_k^0 \loss(\alpha, k)\}$. Additionally,
  \begin{equation*}
    \fdiv{P_{1:k-1}}{P_k}
    = \inf_{\alpha \in \R^k}
    \sum_{i=1}^k \nullprior_i \loss(\alpha, i)
    - \inf_\discfunc \E[\loss(\discfunc(X), Y)],
  \end{equation*}
  where the expectation is over
  $Y \sim \nullprior$ and $X \sim P_y$
  conditional on $Y = y$.
\end{corollary}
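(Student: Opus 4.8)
The plan is to obtain $\loss$ by composing the two directions we already have---Proposition~\ref{proposition:f-to-U}, which passes from $f$ to an uncertainty function $U$, and Proposition~\ref{proposition:u-to-loss-correspondence}, which passes from $U$ to a convex loss---and then to verify the two displayed identities by tracing through the infimal and conjugate representations. First I would apply Proposition~\ref{proposition:f-to-U} to the given closed convex $f$ with $f(\ones)=0$, setting $U(t) = -k t_k f(t_1/t_k, \dots, t_{k-1}/t_k)$; as recorded there, $U$ is closed concave on $\R^k_+$ (a negative multiple of the closed perspective of $f$), positively homogeneous of degree one, and $U(\ones/k) = -f(\ones) = 0$. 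Proposition~\ref{proposition:f-to-U} itself then gives $\fdiv{P_1,\dots,P_{k-1}}{P_k} = U(\ones/k) - \E[U(\wt{\prior}(X))]$ for the experiment with prior $\prior^0 = \ones/k$.

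Second, I would feed this $U$ into Proposition~\ref{proposition:u-to-loss-correspondence}: its conclusion is exactly the family~\eqref{eqn:loss-def}, $\loss_i(\alpha) = -\alpha_i + (-U)^*(\alpha)$ with $(-U)^*(\alpha) = \sup_{\prior\in\simplex_k}\{\alpha^T\prior + U(\prior)\} = \sup_{\prior\in\simplex_k}\{\prior^T\alpha - k\prior_k f(\prior_1/\prior_k,\dots,\prior_{k-1}/\prior_k)\}$, which is the formula in the corollary, and moreover $U(\prior) = \inf_\alpha \sum_i \prior_i \loss_i(\alpha)$, i.e.\ $U = U_\loss$ in the sense of~\eqref{eqn:inf-representation-full}. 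The ``additionally'' chain is then bookkeeping: $U_\loss(\prior^0) = \inf_\alpha \sum_i \prior^0_i \loss_i(\alpha)$ by that identity; $\E[U_\loss(\wt{\prior}(X))]$ equals the posterior Bayes risk $\inf_{\discfunc\in\Discfuncs}\E[\loss_Y(\discfunc(X))]$ by~\eqref{eqn:posterior-risk-def}; and $U_\loss(\prior^0) - \E[U_\loss(\wt{\prior}(X))] = \fdiv{P_1,\dots,P_{k-1}}{P_k}$ by the previous paragraph applied to $U = U_\loss$.

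The one genuine computation is the first display, that $f(t) = \sup_\alpha\{U_\loss(\prior^0) - \sum_{i=1}^{k-1}\prior^0_i \loss_i(\alpha) t_i - \prior^0_k \loss_k(\alpha)\}$, i.e.\ that the function $f_{\loss,\prior^0}$ of~\eqref{eqn:f-from-loss} at $\prior^0 = \ones/k$ coincides with $f$. I would argue directly: writing $v = (t_1,\dots,t_{k-1},1)\in\R^k_+$ and $S = \ones^Tv = 1 + \sum_{i<k} t_i > 0$, substituting $\loss_i(\alpha) = -\alpha_i + (-U)^*(\alpha)$ and using $U_\loss(\prior^0)=0$ collapses the supremum to $\tfrac1k\sup_\alpha\{\alpha^Tv - S(-U)^*(\alpha)\}$; since $v/S\in\simplex_k$, the Fenchel identity $\sup_\alpha\{\alpha^T\prior - (-U)^*(\alpha)\} = -U(\prior)$ for $\prior\in\simplex_k$ (the displayed relation in the proof of Proposition~\ref{proposition:u-to-loss-correspondence}) reduces this to $-\tfrac Sk U(v/S)$, and degree-one homogeneity together with the defining formula for $U$ gives $U(v/S) = -\tfrac kS f(t)$, so the whole expression equals $f(t)$. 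The expected obstacle is only the care needed here: since $(-U)^*$ is not positively homogeneous, the reduction must route through a genuine simplex point (hence the normalization of $v$ by $S$), and $U$ may be $-\infty$ on the relative boundary of $\simplex_k$---but this is already absorbed because the closed perspective in Proposition~\ref{proposition:f-to-U} and the biconjugacy in Proposition~\ref{proposition:u-to-loss-correspondence} are stated for closed (concave) functions, and $v/S$ has strictly positive last coordinate so no boundary case of the perspective arises at the final step. (Alternatively, the first display follows by combining the chain around~\eqref{eqn:f-from-loss} with Proposition~\ref{proposition:f-to-U} to get $\genfdiv{f_{\loss,\prior^0}}{P_1,\dots,P_{k-1}}{P_k} = \fdiv{P_1,\dots,P_{k-1}}{P_k}$ for all $P_{1:k}$ and then invoking that a divergence functional determines its generator under $f(\ones)=0$; the direct computation avoids needing this uniqueness statement.)
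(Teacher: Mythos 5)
Your proposal is correct and follows the same route as the paper, which derives the corollary by combining Proposition~\ref{proposition:f-to-U} with Proposition~\ref{proposition:u-to-loss-correspondence} and the infimal representation~\eqref{eqn:inf-representation-full}; the paper simply declares the result ``immediate'' from that combination, whereas you spell out the one nontrivial verification (that $f_{\loss,\prior^0}=f$ via normalizing $v=(t,1)$ by $S=\ones^Tv$, applying the Fenchel identity at $v/S\in\simplex_k$, and using the homogeneity of the closed perspective), and that computation is accurate.
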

\noindent
For binary classification problems,
\citet[Thm.~1]{NguyenWaJo09} provide an explicit construction
of a closed convex margin-based loss inducing the $f$-divergence
as in Eq.~\eqref{eqn:f-from-loss}; the binary case
allows a complete characterization of all such
convex functions, which appears difficult in the multiclass case.

Corollary~\ref{corollary:f-to-loss}, coupled with the information
representation given by the $f$-divergence~\eqref{eqn:f-from-loss}, shows
the complete equivalence between $f$-divergences,
loss functions $\loss$, and entropies $\entropy$.  For any
$f$-divergence, there exists a loss function $\loss$ and prior $\prior =
\ones/k$ such that $\fdiv{P_{1:k-1}}{P_k} = \priorrisk -
\postbayesrisk$. Conversely, for any loss function $\loss$ and
prior $\prior$, there exists a
multi-way $f$-divergence such that the gap $\priorrisk
- \postbayesrisk = \fdiv{P_{1:k-1}}{P_k}$.

\subsection{Examples of generalized entropies and loss correspondences}
\label{sec:loss-uncertainty-examples}

To complement our general results, we illustrate the correspondence between
(concave) generalized entropies and the loss
construction~\eqref{eqn:loss-def} through several examples, using
Propositions~\ref{proposition:calibration}
and~\ref{proposition:zo-uncertainty-calibrated} to guarantee classification
calibration.

\begin{example}[Zero-one loss, Example~\ref{example:zero-one-to-uncertainty},
    continued]
  \label{example:max-order}
  We use the generalized entropy $\entropy(\prior) = 1 - \max_j
  \prior_j$ generated by the zero-one loss to derive a convex loss
  function $\loss$ that gives the same entropy via the
  representation~\eqref{eqn:inf-representation-full}.
  The conjugate to $-\entropy$ is
  \begin{equation}
    (-\entropy)^*(\alpha)
    = 1 + \max\bigg\{\alpha_{(1)} - 1,
    \frac{\alpha_{(1)} + \alpha_{(2)}}{2} - \half,
    \ldots, \frac{\sum_{i=1}^k \alpha_{(i)}}{k} - \frac{1}{k} \bigg\},
    \label{eqn:conjugate-zo-loss}
  \end{equation}
  where $\alpha_{(1)} \ge \alpha_{(2)} \ge \cdots$ are the entries of
  $\alpha \in \R^k$ in sorted order (see
  \S~\ref{sec:proof-conjugate-zo-loss} for a proof).  Then the convex
  ``family-wise'' loss, named for its similarity to family-wise error
  control in hypothesis tests,
  \begin{equation*}
    \maxloss(\alpha, i) = 1 - \alpha_i + \max_{l \in \{1, \ldots, k\}}
    \bigg\{\frac{1}{l} \sum_{j = 1}^l \alpha_{(j)}
    - \frac{1}{l} \bigg\}
  \end{equation*}
  generates the same entropy $\entropy_{\maxloss}$ and associated
  $f$-divergence as the zero-one loss. Moreover,
  Proposition~\ref{proposition:zo-uncertainty-calibrated} guarantees that
  $\maxloss$ is classification calibrated
  (Def.~\ref{definition:classification-calibrated}). It appears that the
  loss $\maxloss$ is a new convex classification-calibrated loss
  function.
\end{example}

Rather than re-considering Example~\ref{example:weighted-misclassification},
which we do later in the context of showing that distinct convex losses can
yield the same generalized entropy, we now consider the multiclass logistic loss.
The loss does \emph{not} correspond to the zero-one loss, but it generates
Shannon entropy and information.

\begin{example}[Logistic loss and entropy]
  \label{example:logistic-loss}
  For $1\leq i\leq k$, define  $p_i(\alpha) = e^{\alpha_i}/\sum_{j=1}^k e^{\alpha_j}$.
  The multi-class logistic loss is then
  \begin{equation*}
    \loss(\alpha, i) = -\log p_i(\alpha)
    = \log \bigg(\sum_{j = 1}^k e^{\alpha_j - \alpha_i}\bigg)
    ~~ \mbox{for}~~1\leq i\leq k. 
  \end{equation*}
  The entropy associated with the loss is the
  familiar Shannon entropy,
  \begin{equation}
    \label{eqn:logistic-bayes-risk}
    \entropy_\loss(\prior) = \inf_{\alpha\in \R^k} \bigg\{
    -\sum_{i=1}^k \prior_i \log p_i (\alpha )\bigg\}
    = -\sum_{i=1}^k \prior_i \log \prior_i.
  \end{equation}
  The conjugacy calculation~\eqref{eqn:loss-def} (i.e.\
  our inverse construction from $\entropy$ to loss $\loss$) to
  generate $\loss$ also yields the multi-class logistic loss.
  That the multiclass logistic loss is calibrated for the zero-one
  loss~\cite[\S~4.4]{Zhang04a} is now immediate: the negative Shannon
  entropy is strongly convex over the simplex $\simplex_k$ (this is
  Pinsker's inequality~\cite[Ch.~17.3]{CoverTh06}), so the fact that
  logistic loss and Shannon entropy are dual via Eq.~\eqref{eqn:loss-def}
  and Proposition \ref{proposition:calibration} yield calibration.
  The information measure~\eqref{eqn:information} associated with the
  logistic loss is the mutual information between the
  observation $X$ and label $Y$. Indeed, we have
  \begin{align*}
    \infoentropy
    = \entropy(\prior) - \E[\entropy(\wt{\prior}(X))]
    & = H(Y) - \int_{\mathcal{X}} H(Y \mid X= x) d\wb{P}(x) \\
    & = H(Y) - H(Y \mid X)
    = I(X ; Y)
  \end{align*}
  where $H$ denotes the Shannon entropy, $\wb{P} = \sum_{i=1}^k \prior_i
  P_i$, and $I(X; Y)$ is the usual (Shannon) mutual information between $X$ and
  $Y$.
\end{example}

We include one final example to show that in some instances, many different
\emph{convex} losses can yield the same generalized entropy $\entropy$.

\begin{example}[Hinge losses]
  \label{example:hinge-loss}
  Define the pairwise multiclass hinge loss
  \begin{equation*}
    \hingeloss(\alpha, i) = \sum_{j \neq i}
    \hinge{1 + \alpha_j} + \bindic{\ones^T \alpha = 0}.
  \end{equation*}
  We also consider the slight extension to weighted loss functions to address
  asymmetric losses of the form~\eqref{eqn:weighted-zero-one}
  from Example~\ref{example:weighted-misclassification}. In this case,
  given the loss matrix $C \in \R^{k \times k}_+$, we set
  \begin{equation*}
    \hingeloss(\alpha, i) = \sum_{j = 1}^k c_{ij} \hinge{1 + \alpha_j}
    + \bindic{\ones^T \alpha = 0}.
  \end{equation*}
  The loss $\loss(\alpha, i) = \sum_{j \neq i} c_{ij} \hinge{1 + \alpha_j -
    \alpha_i}$ yields a completely identical set of calculations without the
  restriction $\ones^T \alpha = 0$, as it is invariant to
  shifts.  A calculation (see
  \S~\ref{sec:proof-conjugate-hinge-loss} for completeness) shows the
  generalized entropy~\eqref{eqn:inf-representation-full}
  associated with the hinge loss with loss matrix $C = [c_1 ~ \cdots ~
    c_k]$ is
  \begin{equation}
    \label{eqn:conjugate-hinge-loss}
    \entropy_{\hingeloss}(\prior) =
    \inf_{\alpha\in \R^k}
    \left\{\sum_{i=1}^{k} \prior_i \hingeloss(\alpha, i)\right\}
    = k \min_l \prior^T c_l.
  \end{equation}

  Such losses satisfy a number of classification calibration
  guarantees; we note one,
  essentially due to \citet[Thm.~8]{Zhang04a}.  For completeness, we include
  a proof in Appendix~\ref{sec:proof-weighted-calibration}.
  \begin{observation}
    \label{observation:weighted-calibration}
    Let $\phi : \R \to \R$ be any bounded
    below convex function,
    differentiable on $\openleft{-\infty}{0}$, with  $\phi'(0) < 0$.
    Then $\loss(\alpha, y) = \sum_{i = 1}^k c_{yi} \phi(-\alpha_i)$
    is classification calibrated for the cost matrix $C$
    (Def.~\ref{definition:classification-calibrated},
    Eq.~\eqref{eqn:weighted-classification-calibrated}).
  \end{observation}
  \noindent
  Taking $C = \ones\ones^T - I_{k \times k}$, we see that the hinge loss is
  calibrated for the zero-one loss
  (Ex.~\ref{example:zero-one-to-uncertainty}); taking arbitrary $C \in \R^{k
    \times k}_+$, the weighted hinge loss is calibrated for the cost matrix
  $C$. Even more, we have the following
  quantitative calibration guarantee
  in analogy with
  Proposition~\ref{proposition:zo-uncertainty-calibrated}:
  \begin{equation*}
    \sum_{i = 1}^k \prior_i \hingeloss(\alpha, i)
    - \inf_{\alpha'} \sum_{i = 1}^k \prior_i \hingeloss(\alpha',i)
    \ge \sum_{i=1}^k \prior_i \wzoloss(\alpha, i)
    - \inf_{\alpha'} \sum_{i = 1}^k \prior_i \wzoloss(\alpha',i)
  \end{equation*}
  for all $\prior \in \simplex_k$ and $\alpha \in \R^k$, strengthening
  Observation~\ref{observation:weighted-calibration}.  (We prove this as
  Lemma~\ref{lemma:hinge-great-gap} in
  \S~\ref{sec:hinge-loss-calibration}.)
  In the binary case~\cite[Lem.~2]{NguyenWaJo09},
  similar quantitative guarantees hold for
  \emph{any} margin-based classification calibrated loss
  $\loss$ for which $\entropy_\loss = \entropy_{\zoloss}$; we do not know
  if this extends to the multiclass case.
\end{example}


\section{Comparison of loss functions}
\label{sec:equiv}

In Section~\ref{sec:relation}, we demonstrated the correspondence between
loss functions, generalized entropies, statistical information,
$f$-divergences, and (in restricted cases)
classification calibration.
These correspondences assume that decision makers have
access to the entire observation $X$, which is often not the case; as noted
in the introduction, it is often beneficial to pre-process data to make it
lower dimensional, communicate or store it efficiently, or to improve
statistical behavior.
Thus, we now explore the impact quantization has on these concepts.


To motivate this further, consider that each of the family-wise loss
$\maxloss$ of Ex.~\ref{example:max-order}, logistic loss
(Ex.~\ref{example:logistic-loss}), and any loss of the form $\loss(\alpha, y)
= \sum_{i \neq y} \phi(-\alpha_i)$ for $\phi$ convex and decreasing with
$\phi'(0) < 0$ (Ex.~\ref{example:hinge-loss},
Observation~\ref{observation:weighted-calibration}) is classification
calibrated. This relates to one of the major criticisms of classification
calibration: if the Bayes classifier (minimizer of risk over all
functions $\mc{X} \to \R^k$) does not belong to the class of functions
considered, classification calibration says little.  In this context, we
shed light on this issue by identifying losses that are consistent
(calibrated) even with the additional selection of quantizer or data
representation---a restriction of the class of possible
functions as in the implication~\eqref{eqn:restricted-risk-convergence} in
the introduction.



\subsection{A model of quantization and experimental design}
\label{sec:quantization-model}

Abstractly, we treat the design of an experiment or choice of data
representation as a quantization problem, where a quantizer $\quant$ maps
the space $\mc{X}$ to a measurable space $\mc{Z}$.  Then, for a
loss $\loss$, prior $\prior \in \simplex_k$ on the
label $Y$, and discriminant $\discfunc : \mc{Z} \to \R^k$,
we consider the quantized risk~\eqref{eqn:quantized-risk},
which we recall is
\begin{equation*}
  \quantrisk
  \defeq \E\left[\loss (\discfunc(\quant(X)), Y)\right].
\end{equation*}
Given class-conditional
distributions $P_{1:k}$ (equivalently, hypotheses $H_i : P_i$ in
the Bayesian testing setting) and collection $\quantfam$ of quantizers, our
criterion is to choose the quantizer $\quant$ that allows the best
attainable risk. That is, we consider the quantized Bayes $\loss$-risk,
defined as the infimum of the risk~\eqref{eqn:quantized-risk} over
discriminants $\Discfuncs = \{\discfunc : \mc{Z} \to
\R^k\}$,
\begin{equation}
  \label{eqn:quantized-bayes-risk}
  \inf_{\discfunc \in \Discfuncs}
  \quantrisk
  = \int_{\mc{Z}}
  \inf_\alpha \sum_{i = 1}^k
  \prior_i \loss(\alpha, i) dP_i^\quant(z)
\end{equation}
where $P^\quant(A) = P(\quant^{-1}(A))$ denotes the push-forward measure.
The risk~\eqref{eqn:quantized-bayes-risk} measures the best
attainable risk for a fixed choice of $\quant \in \quantfam$;
one thus seeks the design $\quant$ giving the lowest
quantized Bayes $\loss$-risk.

Whether for computational or analytic reasons, minimizing the
loss~\eqref{eqn:quantized-bayes-risk} is often intractable; the zero-one
loss $\zoloss$ (Ex.~\ref{example:zero-one-to-uncertainty}), for example, is
non-convex and discontinuous. It is thus of interest to understand the
asymptotic consequences of using a surrogate loss $\loss$ in place of the
desired loss (say
$\zoloss$)~\cite{Zhang04a,LugosiVa04,BartlettJoMc06,TewariBa07}, including
the setting in which one incorporates further dimension reduction via
the choice $\quant \in \quantfam$. \citet{NguyenWaJo09} introduce and
study this problem for binary classification, giving a
correspondence between $f$-divergences, loss functions, and surrogate
consistency with quantization.  The consequences of using a surrogate for
consistency of the resulting quantization and classification procedure in
the multiclass case are \emph{a-priori} unclear: we do not know when using
such a surrogate can be done without penalty.  To that end, we now
characterize when two loss functions $\loss^{(1)}$ and $\loss^{(2)}$ provide
equivalent criteria for choosing quantizers (experimental designs or data
representations) according to the Bayes
$\loss$-risk~\eqref{eqn:quantized-bayes-risk}.


\subsection{Universal Equivalence of Loss Functions}
\label{sec:universal-equivalence-losses}

Recalling our arguments in Section~\ref{sec:correspondence-u-f} that
statistical information (the gap between prior and posterior risks) is a
multi-way $f$-divergence between distributions $P_1, \ldots, P_{k-1}$ and
$P_k$, we give a quantized version of this construction. In analogy with the
results of Section~\ref{sec:correspondence-u-f}, the quantized statistical
information is
\begin{equation}
  \label{eqn:quantized-gap}
  \begin{split}\lefteqn{\qsif
      \defeq
      \entropy_\loss(\prior) - \E[\entropy_\loss(\wt{\prior}(\quant(X)))]} \\
    & = \inf_{\alpha \in \R^k}
    \sum_{i=1}^k \prior_i \loss(\alpha, i)
    - \inf_\discfunc \quantrisk
    = \genfdiv{f_{\loss,\prior}}{P_1, \ldots, P_{k-1}}{P_k \mid \quant},
  \end{split}
\end{equation}
where $\entropy_\loss(\prior) = \inf_{\alpha \in \R^k} \sum_{i=1}^k \prior_i
\loss(\alpha, i)$ as in~\eqref{eqn:inf-representation-full}, the convex
function
$f_{\loss,\prior}$ is defined as in expression~\eqref{eqn:f-from-loss} and
does not depend on the quantizer $\quant$, and $\wt{\prior}(\quant(X))$
denotes the posterior distribution on $Y \in [k]$ conditional on observing
$\quant(X)$.
We extend
\citet{NguyenWaJo09}'s notion of universal equivalence from the binary
case, defining losses as
equivalent if they induce the same ordering of quantizers
$\quant$ under the information measure~\eqref{eqn:quantized-gap}.
\begin{definition}
  \label{def:equivalent-losses}
  Loss functions $\loss^{(1)}$ and $\loss^{(2)}$ are
  \emph{universally equivalent} for the prior $\prior$, denoted
  $\loss^{(1)} \uniequiv_\prior \loss^{(2)}$, if for any distributions
  $P_1, \ldots, P_k$ on $X$ and
  quantizers $\quant_1$ and $\quant_2$
  \begin{align*}
      & \quantinfo{\entropy_{\loss^{(1)}}}{\quant_1}
      \le \quantinfo{\entropy_{\loss^{(1)}}}{\quant_2}
      ~~ \mbox{if and only if} ~~ \\
      & ~~ \quantinfo{\entropy_{\loss^{(2)}}}{\quant_1}
      \le \quantinfo{\entropy_{\loss^{(2)}}}{\quant_2}.
  \end{align*}
\end{definition}
\noindent
Definition~\ref{def:equivalent-losses}
evidently is equivalent to the ordering condition
\begin{equation}
  \label{eqn:loss-universal-equivalence}
  \begin{split}
    & \inf_\discfunc \risk_{\loss^{(1)},\prior}(\discfunc \mid \quant_1)
    \le \inf_\discfunc \risk_{\loss^{(1)}, \prior}(\discfunc \mid \quant_2)
    ~~ \mbox{if and only if} \\
    & ~~ \inf_\discfunc \risk_{\loss^{(2)},\prior}(\discfunc \mid \quant_1)
    \le \inf_\discfunc \risk_{\loss^{(2)}, \prior}(\discfunc \mid \quant_2),
  \end{split}
\end{equation}
for all distributions $P_1, \ldots, P_k$, on the quantized Bayes
$\loss$-risk~\eqref{eqn:quantized-bayes-risk}.  This definition is somewhat
stringent: losses are universally equivalent only if they induce the same
quantizer ordering for all population distributions. If a quantizer
$\quant_1$ is finer than $\quant_2$, all losses yield
$\quantinfo{\entropy_\loss}{\quant_2} \le \quantinfo{\entropy_\loss}{\quant_1}$ by the
data processing inequality (Corollary~\ref{corollary:data-processing} of
Section~\ref{sec:f-divergence}).  The stronger equivalence notion is
important for nonparametric classification settings in which the underlying
distribution on $(X, Y)$ is only weakly constrained and neither of a pair of
quantizers $\quant_1, \quant_2 \in \quantfam$ is finer than the other.

Definition~\ref{def:equivalent-losses} and the
representation~\eqref{eqn:quantized-gap} suggest that the entropy function
$\entropy_\loss$ associated with the loss $\loss$ through the infimal
representation~\eqref{eqn:inf-representation-full} and the $f$-divergence
associated with $\loss$ via the construction~\eqref{eqn:f-from-loss} are
important for the equivalence of two loss functions. This is indeed
the case. First, we have the following result on universal equivalence
of loss functions based on their associated entropies.
\begin{theorem}
  \label{theorem:loss-equivalent-entropy}
  Let $\loss^{(1)}$ and $\loss^{(2)}$ be bounded below losses
  and $\entropy_{\loss^{(1)}}$ and $\entropy_{\loss^{(2)}}$ be the associated
  generalized entropies as in the
  construction~\eqref{eqn:inf-representation-full}. Then
  $\loss^{(1)}$ and $\loss^{(2)}$ are universally equivalent with respect 
  to all priors $\prior$ if and
  only if there exist $a > 0, b \in \R^k$, and $c \in \R$ such that
  for all $\prior \in \simplex_k$,
  \begin{equation*}
    \label{eqn:u-loss-equivalent}
    \entropy_{\loss^{(1)}}(\prior) = a \entropy_{\loss^{(2)}}(\prior) + b^T\prior + c.
  \end{equation*}
\end{theorem}
\noindent
We can also
characterize universal equivalence for a prior
$\prior$. 
\begin{theorem}
  \label{theorem:loss-equivalent}
  Let $\prior \in \simplex_k$ and as in
  Theorem~\ref{theorem:loss-equivalent-entropy}, let $\loss^{(1)}$ and
  $\loss^{(2)}$ be bounded below loss functions, with $f^{(1)}_\prior$ and
  $f^{(2)}_\prior$ the associated $f$-divergences as in the
  construction~\eqref{eqn:f-from-loss}.  Then $\loss^{(1)}$ and
  $\loss^{(2)}$ are universally equivalent with respect to the prior
  $\prior$ if and only if there exist $a > 0, b \in \R^{k-1}$, and $c
  \in \R$ such that
  \begin{equation}
    \label{eqn:rel-f-loss-equivalent}
    f^{(1)}_{\pi}(t) = a f^{(2)}_{\pi}(t) + b^T t + c
    ~~~ \mbox{for~all~} t \in \R_+^{k-1}.
  \end{equation}
\end{theorem}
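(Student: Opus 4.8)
Throughout write $f^{(j)} := f^{(j)}_\prior$, and recall from the quantized‑information identity~\eqref{eqn:quantized-gap} that $\quantinfo{U_{\loss^{(j)}}}{\quant} = \genfdiv{f^{(j)}}{P_1,\ldots,P_{k-1}}{P_k \mid \quant}$. Unwinding Definition~\ref{def:equivalent-losses}, $\loss^{(1)} \uniequiv_\prior \loss^{(2)}$ holds exactly when, for \emph{every} choice of $P_1,\ldots,P_k$, the two maps $\quant \mapsto \genfdiv{f^{(j)}}{P_{1:k-1}}{P_k \mid \quant}$, $j=1,2$, induce the same ordering on quantizers. The ``if'' direction is then a one‑line calculation: if $f^{(1)} = a f^{(2)} + b^T t + c$ with $a>0$, then since any quantizer partitions $\mc{X}$ we have $\sum_A P_i(A)=1$ for each $i$, so the affine part contributes a constant independent of $\quant$; evaluating the affine relation at $\ones$ gives $b^T\ones + c = f^{(1)}(\ones) = 0$, so in fact $\genfdiv{f^{(1)}}{\cdot \mid \quant} = a\,\genfdiv{f^{(2)}}{\cdot \mid \quant}$ identically, and the orderings coincide because $a>0$.

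The work is the converse. I would first construct a flexible probe. Call a finitely supported probability measure $\rho$ on $\R^{k-1}_+$ \emph{centered} if $\int t\,d\rho(t) = \ones$; if $\rho$ puts weight $\omega_\ell > 0$ on $t^{(\ell)}$, then the vectors $v^{(\ell)} := \omega_\ell(t^{(\ell)},1)$ lie in $\R^k_+$ with positive last coordinate (where the perspective $\wt{f^{(j)}}$ from~\eqref{eqn:closed-perspective} is finite, since $\loss^{(j)}$ bounded below forces $f^{(j)}_\prior$ finite on $\R^{k-1}_+$), they satisfy $\sum_\ell v^{(\ell)} = \ones$, and $F_j(\rho) := \int f^{(j)}\,d\rho = \sum_\ell \wt{f^{(j)}}(v^{(\ell)})$. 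Given any two centered $\rho,\rho'$ with atom‑vectors $\{v^{(\ell)}\}$ and $\{w^{(m)}\}$, set $\mc{X} = \{(\ell,m)\}$ with $P_i(\{(\ell,m)\}) := v^{(\ell)}_i w^{(m)}_i$ (a valid probability vector, as $\sum_\ell v^{(\ell)}_i = \sum_m w^{(m)}_i = 1$), and let $\quant_1,\quant_2$ be the two coordinate projections; then $\genfdiv{f^{(j)}}{P_{1:k-1}}{P_k \mid \quant_1} = F_j(\rho)$ and $\genfdiv{f^{(j)}}{P_{1:k-1}}{P_k \mid \quant_2} = F_j(\rho')$. Hence universal equivalence forces $F_1$ and $F_2$ to induce the same order on centered measures; note also that each $F_j$ is affine under mixtures of centered measures, $F_j(\delta_\ones) = 0$, and $F_j \ge 0$ by Jensen.

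Same order plus level‑set compatibility gives $F_1 = \psi \circ F_2$ for a strictly increasing $\psi$ on the range of $F_2$ with $\psi(0)=0$; mixing any centered $\rho$ with $\delta_\ones$ yields $F_j(\lambda\rho + (1-\lambda)\delta_\ones) = \lambda F_j(\rho)$, so $\psi(\lambda s) = \lambda\psi(s)$ for $\lambda \in [0,1]$, and since the range of $F_2$ is an interval containing $0$ this forces $\psi(s) = a s$ for a single $a>0$, i.e.\ $F_1 \equiv a F_2$. Applying this to the centered measures with atom‑vectors $\{v,w,\ones-v-w\}$ and $\{v+w,\ones-v-w\}$ (for $v,w \ge 0$ with $v+w < \ones$ componentwise) and cancelling the common term $\wt{f^{(j)}}(\ones-v-w)$ gives $\wt{f^{(1)}}(v) + \wt{f^{(1)}}(w) - \wt{f^{(1)}}(v+w) = a\big(\wt{f^{(2)}}(v) + \wt{f^{(2)}}(w) - \wt{f^{(2)}}(v+w)\big)$; by positive homogeneity of the perspectives this extends to all $v,w$ with positive last coordinate, so $h := \wt{f^{(1)}} - a\wt{f^{(2)}}$ satisfies Cauchy's equation $h(v+w) = h(v)+h(w)$ there. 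Being a difference of convex functions, $h$ is locally Lipschitz (in particular measurable) on $\relint \R^k_+$, hence linear there: $h(v) = \beta^T v$. Dividing by $v_k$ and writing $t = v_{1:k-1}/v_k$ gives $f^{(1)}_\prior(t) = a f^{(2)}_\prior(t) + b^T t + c$ with $b = \beta_{1:k-1}$, $c = \beta_k$, first on $\relint \R^{k-1}_+$ and then on all of $\R^{k-1}_+$ by closedness of $f^{(1)}_\prior, f^{(2)}_\prior$. (If $F_2 \equiv 0$, i.e.\ $f^{(2)}_\prior$ is affine, the same Cauchy argument applied directly to $\wt{f^{(1)}}$ shows $f^{(1)}_\prior$ affine, and the conclusion is immediate.)

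I expect the main obstacle to be the bookkeeping at the boundary of the orthant: the perspectives $\wt{f^{(j)}}$ can equal $+\infty$ when an atom has zero $P_k$‑mass, which can turn the convenient identities above into ill‑defined $\infty - \infty$ expressions, so the argument must be carried out on $\relint \R^k_+$ and then pushed to the boundary via closedness of $f^{(1)}_\prior, f^{(2)}_\prior$ (and, if one also wants to realize divergences involving boundary atoms, via the approximation Proposition~\ref{proposition:fdiv-sup}). A secondary point requiring care is that the induced orderings genuinely factor ($F_2(\rho)=F_2(\rho')\Rightarrow F_1(\rho)=F_1(\rho')$) and that $\mathrm{range}(F_2)$ is an interval, so that $\psi$ is well defined; this is routine but must be said. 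Note one cannot shortcut via Theorem~\ref{theorem:loss-equivalent-U}, since equivalence at a single prior need not imply equivalence at all priors, so the per‑prior statement has to be proved on its own as above.
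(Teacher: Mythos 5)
Your proof is correct, and its ``only if'' direction takes a genuinely different route from the paper's. The paper first reduces universal equivalence to \emph{order equivalence} of $f^{(1)}_\prior$ and $f^{(2)}_\prior$ (Definition~\ref{def:order-equivalent} and Lemma~\ref{lemma:order-univ}, whose probe construction is also a product space with coordinate-projection quantizers, but which realizes prescribed marginals via the row/column-sum matrix of Lemma~\ref{lemma:grid}; your independence coupling $P_i(\ell,m)=v^{(\ell)}_i w^{(m)}_i$ achieves the same effect more directly). Where you truly diverge is in extracting the affine relation: the paper proves a general characterization of order-equivalent closed convex functions on an arbitrary convex set (Lemma~\ref{lemma:equivalence-subset}), via rational convex combinations, a simplex/affine-independence argument with the sign-matching functions $\varphi_j$, a covering of the domain by simplices, and a patching step that uses (non)differentiability to force the local constants to agree. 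You instead exploit the fact that the domain here is the full orthant---a cone closed under addition---to (i) pin down the global scalar $a$ first, since mixing with $\delta_\ones$ makes each $F_j$ positively homogeneous and hence forces the order-isomorphism $\psi$ to be linear, and (ii) obtain the affine part from a Cauchy functional equation for $\wt{f^{(1)}}-a\,\wt{f^{(2)}}$ on the open orthant, solved by continuity of differences of finite convex functions. This is shorter and more elementary; the trade-off is that it is tailored to the orthant and would not directly yield Theorem~\ref{theorem:loss-equivalent-U}, where the relevant domain is $\simplex_k$ (not closed under addition) and the paper reuses the same Lemma~\ref{lemma:equivalence-subset} for both results. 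The delicate points you flag---finiteness of $f^{(j)}_\prior$ on $\R^{k-1}_+$ from boundedness below of the losses, carrying out the Cauchy argument on the interior and extending by closedness (Lemma~\ref{lemma:equal-functions}), well-definedness of $\psi$ and the interval structure of the range of $F_2$, and the degenerate affine case---are exactly the ones that need care, and your treatment of each is sound.
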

\noindent
\citet[Thm.~3]{NguyenWaJo09} prove Theorem~\ref{theorem:loss-equivalent}
for binary classification problems ($k = 2$),
using convex-conjugacy arguments. We outline our
proofs (which apply for arbitrary $k$ and so require a different
set of tools) in Section~\ref{sec:proof}.

\subsection{Consistency of empirical risk minimization}
\label{sec:consistency-erm}

A major application of these theorems is to show that
certain non-convex loss functions (such as the zero-one loss) are
universally equivalent to convex loss functions, including variants of the
hinge loss, by showing that their associated entropies are
scalar multiples.  As a first application of
Theorems~\ref{theorem:loss-equivalent-entropy} and~\ref{theorem:loss-equivalent},
however, we consider the Bayes consistency of empirical risk minimization
for selecting a discriminant $\discfunc$ and quantizer $\quant$
(in analogy with~\cite[Thm.~2]{NguyenWaJo09}). In this case, we receive
a sample $\{(X_1, Y_1), \ldots, (X_n, Y_n)\}$ and define
the empirical risk
\begin{equation*}
  \what{\risk}_{\loss,n}(\discfunc \mid \quant)
  \defeq \frac{1}{n} \sum_{i = 1}^n \loss(\discfunc(\quant(X_i)), Y_i).
\end{equation*}
Now, let $\quantfam_1 \subset \quantfam_2 \subset \cdots \subset \quantfam$
be a non-decreasing collection of quantizers, indexed by sample size $n$,
and similarly let $\Discfuncs_1 \subset \Discfuncs_2 \subset \cdots \subset
\Discfuncs$ be a non-decreasing collection of discriminant functions, where
we assume the collections satisfy the estimation and approximation
error conditions
  \begin{equation}
    \label{eqn:error-estimates}
    \begin{split}
    \E\left[\sup_{\discfunc \in \Discfuncs_n, \quant \in \quantfam_n}
      \left|\what{\risk}_{\loss,n}(\discfunc \mid \quant) -
      \risk_\loss(\discfunc \mid \quant) \right|\right]
    & \le \epsestimate_n \\
    \inf_{\discfunc \in \Discfuncs_n, \quant \in \quantfam_n}
    \risk_\loss(\discfunc \mid \quant)
    - \inf_{\discfunc \in \Discfuncs, \quant \in \quantfam}
    \risk_\loss(\discfunc \mid \quant)
    & \le \epsapprox_n,
    \end{split}
  \end{equation}
where $\epsestimate_n \to 0$ and $\epsapprox_n \to 0$ as $n \to \infty$.
Additionally, let $\risk$ be the risk functional for the cost-weighted
misclassification loss $\wzoloss$
(Example~\ref{example:weighted-misclassification}), where
$\wzoloss(\alpha, y) = \max_i \{c_{yi} \mid \alpha_i = \max_j \alpha_j\}$.
Then we have the following result.
\begin{theorem}
  \label{theorem:bayes-consistency}
  Assume the conditions~\eqref{eqn:error-estimates} and that
  $\discfunc_n$ and $\quant_n$ are approximate empirical
  $\loss$-risk minimizers satisfying
  \begin{equation*}
    \epsopt_n \defeq
    \E\Big[\what{\risk}_{\loss,n}(\discfunc_n \mid \quant_n) - \inf_{\discfunc
        \in \Discfuncs_n, \quant \in \quantfam_n}
      \what{\risk}_{\loss,n}(\discfunc \mid \quant)\Big] \to 0
    ~~ \mbox{as}~ n \to \infty.
  \end{equation*}
  Let $\risk\opt(\quantfam) = \inf_{\discfunc \in \Discfuncs, \quant \in
    \quantfam} \risk(\discfunc \mid \quant)$.  If the loss $\loss$ is
  classification calibrated and universally
  equivalent to the cost-weighted loss $\wzoloss$, then
  \begin{equation*}
    \risk(\discfunc_n \mid \quant_n)
    - \risk\opt(\quantfam) \clp{1} 0.
  \end{equation*}
\end{theorem}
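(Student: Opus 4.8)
\emph{Setup and preliminary reduction.} The plan is to bound the excess cost-weighted risk via the decomposition
\begin{equation*}
  \risk(\discfunc_n \mid \quant_n) - \risk\opt(\quantfam)
  = \bigl[\risk(\discfunc_n \mid \quant_n) - V(\quant_n)\bigr]
  + \bigl[V(\quant_n) - \inf_{\quant \in \quantfam} V(\quant)\bigr],
\end{equation*}
where $V(\quant) \defeq \inf_{\discfunc} \risk(\discfunc \mid \quant) = \E[U_{\wzoloss}(\wt{\prior}(\quant(X)))]$ is the quantized Bayes cost-weighted risk, so that $\inf_{\quant \in \quantfam} V(\quant) = \risk\opt(\quantfam)$ and both bracketed terms are nonnegative. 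I will show the second (``quantizer'') term tends to $0$ in $L^1$ using universal equivalence and Theorem~\ref{theorem:loss-equivalent-U}, and the first (``discriminant'') term using classification calibration. As a preliminary, set $V_\loss(\quant) \defeq \inf_{\discfunc} \risk_\loss(\discfunc \mid \quant) = \E[U_\loss(\wt{\prior}(\quant(X)))]$, the last identity being~\eqref{eqn:quantized-gap}. Chaining the three error bounds in the usual way---bound $\risk_\loss(\discfunc_n \mid \quant_n)$ by $\what{\risk}_{\loss,n}(\discfunc_n \mid \quant_n)$ plus the uniform deviation~\eqref{eqn:est-error}, then by the empirical infimum over $\Discfuncs_n \times \quantfam_n$ plus the optimization gap, then by the corresponding population infimum plus another copy of~\eqref{eqn:est-error}, then by $\inf_{\discfunc \in \Discfuncs, \quant \in \quantfam} \risk_\loss(\discfunc \mid \quant)$ plus~\eqref{eqn:approx-error}---and taking expectations gives $\E[\risk_\loss(\discfunc_n \mid \quant_n)] \le \inf_{\discfunc \in \Discfuncs, \quant \in \quantfam}\risk_\loss(\discfunc \mid \quant) + \epsapprox_n + 2\epsestimate_n + \epsopt_n$. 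Since $\discfunc_n \in \Discfuncs$ and $\quant_n \in \quantfam$, the excess on the left is nonnegative pointwise, hence $\clp{1} 0$; it equals the sum of the two nonnegative quantities $\risk_\loss(\discfunc_n \mid \quant_n) - V_\loss(\quant_n)$ and $V_\loss(\quant_n) - \inf_{\quant \in \quantfam} V_\loss(\quant)$, so both of these $\clp{1} 0$.

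\emph{Quantizer term.} Assuming, as is implicit in~(i), that $\loss$ is bounded below, and since $\wzoloss$ is bounded, Theorem~\ref{theorem:loss-equivalent-U} applied to hypothesis~(ii) produces $a > 0$, $b \in \R^k$, $c \in \R$ with $U_\loss(\prior) = a U_{\wzoloss}(\prior) + b^T\prior + c$ for all $\prior \in \simplex_k$. Evaluating this identity at the posterior $\wt{\prior}(\quant(X))$, taking expectations, and using $\E[\wt{\prior}(\quant(X))] = \prior$ yields the affine relation $V_\loss(\quant) = a\, V(\quant) + b^T\prior + c$, valid uniformly over quantizers $\quant$ and over $P_1, \ldots, P_k$. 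Taking $\inf_{\quant \in \quantfam}$ of both sides removes the quantizer-free constant, so $V_\loss(\quant_n) - \inf_{\quant \in \quantfam} V_\loss(\quant) = a\,\bigl(V(\quant_n) - \risk\opt(\quantfam)\bigr)$; since the left-hand side $\clp{1} 0$ by the preliminary reduction, so does the quantizer term $V(\quant_n) - \risk\opt(\quantfam)$.

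\emph{Discriminant term and conclusion.} For each fixed quantizer $\quant$, predicting $Y$ from $\quant(X)$ is an ordinary $k$-class problem on the countable space $\mc{Z}$, so hypothesis~(i) together with the calibration analysis of \citet{Zhang04a} and \citet{TewariBa07} supplies a nondecreasing function $\zeta : \R_+ \to [0, M]$, where $M = \max_{y,i} c_{yi}$, which depends only on $\loss$ and the cost matrix $C$ (not on $\quant$ or $P_{1:k}$), is continuous at $0$ with $\zeta(0) = 0$, and satisfies $\risk(\discfunc \mid \quant) - V(\quant) \le \zeta\bigl(\risk_\loss(\discfunc \mid \quant) - V_\loss(\quant)\bigr)$ for all $\discfunc$, $\quant$, and $P_{1:k}$. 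Evaluating at $(\discfunc_n, \quant_n)$ and using $\risk_\loss(\discfunc_n \mid \quant_n) - V_\loss(\quant_n) \clp{1} 0$ (so the argument of $\zeta$ tends to $0$ in probability), the continuity of $\zeta$ at $0$ and its boundedness give, by bounded convergence, $\risk(\discfunc_n \mid \quant_n) - V(\quant_n) \clp{1} 0$. Adding this to the quantizer step in the opening decomposition yields $\risk(\discfunc_n \mid \quant_n) - \risk\opt(\quantfam) \clp{1} 0$, as claimed.

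\emph{Expected main obstacle.} The delicate point is that $\quant_n$ is random and changes with $n$, so neither universal equivalence (a statement about pairs of quantizers) nor the sequential form of classification calibration applies to a single fixed problem. This is exactly why one needs both the affine identity $V_\loss = a V + \mathrm{const}$ and the calibration inequality to hold \emph{uniformly} over all quantizers and distributions: the former holds uniformly because $b^T\prior + c$ is quantizer-free once $\E[\wt{\prior}(\quant(X))] = \prior$ is invoked, and the latter because the calibration function $\zeta$ depends only on $\loss$ and $C$. Once this uniformity is in place, the remaining manipulations are routine bounded-convergence arguments.
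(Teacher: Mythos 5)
Your proposal is correct and follows essentially the same route as the paper: the paper's proof packages your two steps into a single concave modulus $h$ (Lemma~\ref{lemma:fisher-gap}), whose proof uses exactly your decomposition $\risk(\discfunc\mid\quant)-\risk\opt(\quantfam)=[\risk(\discfunc\mid\quant)-\risk\opt(\quant)]+[\risk\opt(\quant)-\inf_\quant\risk\opt(\quant)]$, handling the quantizer term via the affine identity from Theorem~\ref{theorem:loss-equivalent-U} and the discriminant term via the Zhang/Tewari--Bartlett biconjugate calibration function, and then concludes with the same ERM chaining bound $2\epsestimate_n+\epsopt_n+\epsapprox_n$. The only difference is cosmetic: the paper applies Jensen's inequality to the concave $h$ evaluated at the expected surrogate excess risk, whereas you split the two terms and use a bounded-convergence argument for the calibration piece; both are valid.
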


Theorem~\ref{theorem:bayes-consistency} guarantees that under the
estimation and approximation conditions~\eqref{eqn:error-estimates},
empirical risk minimization is consistent for minimizing
the quantized Bayes risk whenever the loss $\loss$ is classification
calibrated and equivalent to the desired loss.
The proof of Theorem~\ref{theorem:bayes-consistency} reposes on the
following risk inequality, which may be of independent interest.  The lemma
is a consequence of the results on surrogate risk consistency for
classification calibration~\cite{Zhang04a,TewariBa07,Steinwart07} and our
universal equivalence guarantees that exhibits the power of calibration and
universal equivalence.
\begin{lemma}
  \label{lemma:fisher-gap}
  Assume $\loss$ is classification-calibrated and universally
  equivalent to the weighted misclassification loss $\wzoloss$ with cost
  matrix $C \in \R^{k \times k}_+$.
  Then there exists a continuous concave function
  $h$ 
  with $h(0) = 0$ such that
  \begin{equation*}
    \risk(\discfunc \mid \quant)
    - \inf_{\discfunc \in \Discfuncs, \quant \in \quantfam}
    \risk(\discfunc \mid \quant)
    \le h\left(\surrrisk(\discfunc \mid \quant)
    - \inf_{\quant \in \quantfam} \surrrisk\opt(\quant)\right).
  \end{equation*}
  With the choice $\loss(\alpha, y) = \sum_{i=1}^k c_{yi} \hinge{1 +
    \alpha_i} + \bindic{\ones^T \alpha = 0}$ or $\loss(\alpha, y) = \sum_{i
    = 1}^k c_{yi} \hinge{1 + \alpha_i - \alpha_y}$,
  we may take
  $h(\epsilon) = (1 + \frac{1}{k})\epsilon$,
  that is,
  \begin{equation*}
    \risk(\discfunc \mid \quant) - \inf_{\discfunc \in \Discfuncs,
      \quant \in \quantfam} \risk(\discfunc \mid \quant)
    \le \left(1 + \frac{1}{k}\right) \left[
      \surrrisk(\discfunc \mid \quant)
      - \inf_{\discfunc \in \Discfuncs,
        \quant \in \quantfam} \surrrisk(\discfunc \mid \quant)
      \right].
  \end{equation*}
\end{lemma}
\noindent
Lemma~\ref{lemma:fisher-gap} shows that the gap in \emph{surrogate} risk
provides a guaranteed upper bound on the true \emph{cost-weighted} risk; in
the case of the modified hinge losses of Example~\ref{example:hinge-loss},
this gap is linear.  In the binary case, even stronger results are
possible~\cite[Lemma~2]{NguyenWaJo09}---one may
take $h(\epsilon) = a \epsilon$ (for some $a < \infty$)
in Lemma~\ref{lemma:fisher-gap} for any margin-based
classification-calibrated loss universally equivalent to the 0-1
loss---this relies
on the specific form any such binary convex loss must
take~\cite[Eq.~(9)]{NguyenWaJo09}; our Examples~\ref{example:max-order} (the
family-wise loss) and~\ref{example:hinge-loss} show that fairly
different-looking losses can be classification calibrated and universally
equivalent to zero-one loss.  We provide the proof of
Lemma~\ref{lemma:fisher-gap} in \S~\ref{appendix:proof-fisher-gap}.
Theorem~\ref{theorem:bayes-consistency}, which we prove in
\S~\ref{appendix:proof-bayes-consistency}, is then a consequence of this
lemma and Theorem~\ref{theorem:loss-equivalent-entropy}.


\subsection{Examples of universal equivalence}
\label{sec:examples-universal-equivalence}

In this section, we give several examples that build off of
Theorems~\ref{theorem:loss-equivalent-entropy} and~\ref{theorem:loss-equivalent},
showing that there exist convex losses that allow optimal joint
design of quantizers (or measurement strategies) and
discriminant functions, opening the way for potentially efficient convex
optimization strategies.  To that end, we give two hinge-like
loss functions that are universally equivalent to the zero-one loss for all
prior distributions $\prior$.  We also give examples of classification
calibrated loss functions that are not universally equivalent to the
zero-one loss, although minimizing them without quantization yields
Bayes-optimal classifiers.

\begin{example}[Cost-weighted losses]
  \label{example:hinge-loss-equiv}
  We return to Example~\ref{example:hinge-loss}, where we have
  $\hingeloss(\alpha, i) = \sum_{j \neq i} c_{ij} \hinge{1 + \alpha_j} +
  \bindic{\ones^T\alpha = 0}$. In this case, we have
  $\entropy_{\hingeloss}(\prior) = k \min_l \prior^T c_l = k
  \entropy_{\wzoloss}(\prior)$, where $\wzoloss$ denotes the cost-weighted
  misclassification error as in
  Example~\ref{example:weighted-misclassification}.
  Theorem~\ref{theorem:loss-equivalent-entropy} immediately guarantees that
  the (weighted) hinge loss is universally equivalent to the (weighted) 0-1
  loss. The weighted hinge loss $\hingeloss$ is also, as in
  Example~\ref{example:hinge-loss}, calibrated for the cost-weighted
  misclassification error.
\end{example}

\begin{example}[Max-type losses and zero-one loss]
  We return to Example~\ref{example:max-order} and let
  $\maxloss(\alpha, i) = 1 - \alpha_i + \max\{
  \alpha_{(1)} - 1, \frac{\alpha_{(1)} + \alpha_{(2)}}{2} - \half,
  \ldots, \frac{\ones^T \alpha}{k} - \frac{1}{k}\}$,
  the convex family-wise loss.
  By Example~\ref{example:max-order}, the associated
  entropy is
  $\entropy_{\maxloss}(\prior) = 1 - \max_j \prior_j = \entropy_{\zoloss}$
  for $\prior \in \simplex_k$, and
  Proposition~\ref{proposition:zo-uncertainty-calibrated} shows that
  $\maxloss$ is classification calibrated. We
  thus have that $\maxloss$ and the zero-one loss
  $\zoloss$ are universally equivalent by
  Theorems~\ref{theorem:loss-equivalent-entropy}
  and~\ref{theorem:loss-equivalent}.
\end{example}

For our final example, we consider the logistic loss, which is
classification calibrated but not universally
equivalent to the zero-one loss.

\begin{example}[Logistic loss]
  The loss
  $\logitloss(\alpha, i) = \log(\sum_{j=1}^k e^{\alpha_j - \alpha_i})$ has
  (Shannon) entropy $\entropy(\prior) = -\sum_{i=1}^k \prior_i \log \prior_i$,
  as in Ex.~\ref{example:logistic-loss}.
  There are no $a, b,
  c$ such that $\entropy_{\zoloss}(\prior) = 1 - \max_j \prior_j = a
  \entropy_{\logitloss}(\prior) + b^T\prior + c$ for all $\prior \in
  \simplex_k$. Theorem~\ref{theorem:loss-equivalent-entropy} shows that the
  logistic loss is not universally equivalent to the zero-one loss.
  That is, in spite of its classification calibration, there are
  distributions $P_1, \ldots, P_k$, a collection
  $\quantfam$ of quantizers $\mc{X} \to \mc{Z}$,
  and a sequence $\discfunc_n : \mc{Z} \to \R^k$ such that
  $\risk_{\logitloss}(\discfunc_n \mid \quant_n)
  \to \inf_{\discfunc, \quant \in \quantfam}
  \risk_{\logitloss}(\discfunc \mid \quant)$, but
  $\risk_{\zoloss}(\discfunc_n \mid \quant_n)
  \not\to \inf_{\discfunc, \quant \in \quantfam}
  \risk_{\zoloss}(\discfunc \mid \quant)$.
\end{example}


\section{Proof of the Theorems~\ref{theorem:loss-equivalent-entropy}
  and~\ref{theorem:loss-equivalent}}
\label{sec:proof}

The remainder of the main body of the 
paper consists of the major parts of our arguments for
Theorems~\ref{theorem:loss-equivalent-entropy} and~\ref{theorem:loss-equivalent}.
We divide the proof of the theorems into two parts. The ``if'' part is
straightforward; the ``only if'' is substantially more complex.

\paragraph{Proof (if direction)}
We give the proof for Theorem~\ref{theorem:loss-equivalent};
that for Theorem~\ref{theorem:loss-equivalent-entropy} is identical.
Assume that $\domain{f^{(1)}_{\prior}} = \domain{f^{(2)}_{\prior}}$ and there
exist $a > 0, b \in \R^{k-1},$ and $c \in \R$ such that
Eq.~\eqref{eqn:rel-f-loss-equivalent} holds.  By
Definition~\ref{def:general-fdiv} of multi-way $f$-divergences, for
any quantizer $\quant$, we have
\begin{equation*}
  \genfdiv{f_\prior^{(1)}}{P_1, \ldots, P_{k-1}}{P_k \mid \quant}
  = a \genfdiv{f_\prior^{(2)}}{P_1, \ldots, P_{k-1}}{P_k \mid \quant}
  + b^T \ones + c,
\end{equation*}
as $\int_{\mc{X}} dP_i = 1$. Applying the
relationship~\eqref{eqn:quantized-gap}, we obtain
\begin{align*}
  \quantinfo{\entropy_{\loss^{(1)}}}{\quant}
  & = a \quantinfo{\entropy_{\loss^{(2)}}}{\quant} + b^T \ones + c.
\end{align*}
As $a>0$, the universal
equivalence of $\loss^{(1)}$ and $\loss^{(2)}$ follows immediately.
\medskip

We turn to the ``only if'' part of the proofs of
Theorems~\ref{theorem:loss-equivalent-entropy} and~\ref{theorem:loss-equivalent}.
A roadmap is as follows: we first define what
we call \emph{order equivalence} of convex functions, which is related to
the equivalence of $f$-divergences and generalized entropies
(Def.~\ref{def:order-equivalent}). Then, for any two loss functions
$\loss^{(1)}$ and $\loss^{(2)}$ that are universally equivalent, we show
that the associated entropies $\entropy_{\loss^{(1)}}$ and
$\entropy_{\loss^{(2)}}$, as constructed in the infimal
representation~\eqref{eqn:inf-representation-full}, and the functions
$f^{(1)}$ and $f^{(2)}$ generating the $f$-divergences via
expression~\eqref{eqn:f-from-loss}, are order equivalent
(Lemmas~\ref{lemma:order-univ-entropy} and~\ref{lemma:order-univ}).  After this,
we provide a characterization of order equivalent closed convex functions
(Lemma~\ref{lemma:equivalence-subset}), which is the linchpin of our
analysis. The lemma shows that for any two order equivalent closed convex
functions $f_1$ and $f_2$ with $\dom f_1 = \dom f_2$, there
are parameters $a > 0, b \in \R^k$, and $c \in \R$ such that $f^{(1)}(t)
= a f^{(2)}(t) + b^Tt + c$ for all $t \in \dom f_1 = \dom f_2$.  This proves
the ``only if'' part of the Theorems~\ref{theorem:loss-equivalent-entropy}
and~\ref{theorem:loss-equivalent}, yielding the desired result.
We present the main parts of the proof in the body of the paper, deferring
technical nuances to the supplement.

\subsection{Universal equivalence and order equivalence}

By Definition~\ref{def:equivalent-losses}  (and its equivalent
variant stated~\eqref{eqn:loss-universal-equivalence}), universally
equivalent losses $\loss^{(1)}$ and $\loss^{(2)}$ induce the same ordering
of quantized information measures and $f$-divergences.  The next definition
captures this ordering slightly differently.

\begin{definition}
  \label{def:order-equivalent}
  Let $f_1 : \Omega \to \extendedR$ and $f_2 : \Omega \to \extendedR$ be
  closed convex functions, where $\Omega \subset \R^k$ is
  closed convex. Let $m \in \N$ be arbitrary and the matrices $A,
  B \in \R^{k \times m}$ satisfy $A \ones = B \ones$, where $A$ has
  columns $a_i \in \Omega$ and $B$ has columns $b_i \in \Omega$.
  Then $f_1$ and $f_2$ are \emph{order-equivalent} if for
  all $m \in \N$ and all such matrices $A$ and $B$ we have
  \begin{equation}
    \label{eqn:equivalent-order-equivalence}
    \sum_{j=1}^m f_1(a_j) \le \sum_{j=1}^m f_1(b_j)
    ~~ \mbox{if~and~only~if} ~~
    \sum_{j=1}^m f_2(a_j) \le \sum_{j=1}^m f_2(b_j).
  \end{equation}
\end{definition}

As the above context suggests, order equivalence has strong connections
with universal equivalence of loss functions $\loss$ and associated
$f$-divergences and generalized entropies.
The next two lemmas make this explicit.
\begin{lemma}
  \label{lemma:order-univ-entropy}
  If losses $\loss^{(1)}$ and $\loss^{(2)}$ are lower bounded and
  universally equivalent, then the associated entropies of the
  construction~\eqref{eqn:inf-representation-full} are order equivalent over
  $\simplex_k \subset \R^k_+$.
\end{lemma}
\begin{proof}
  Let $\entropy_i$ be the entropy
  (pointwise Bayes risk) associated with $\loss^{(i)}$, noting that
  $\dom \entropy_1 = \dom \entropy_2 = \simplex_k$ because $\inf_{\prior \in \simplex_k}
  \entropy_i(\prior) > -\infty$. Let the matrices $A = [a_1 ~ \cdots ~ a_m]
  \in \R^{k \times m}_+$ and $B \in \R^{k \times m}_+$ satisfy $a_i, b_i \in
  \simplex_k$ for each $i = 1, \ldots, m$, and let $v = \frac{1}{m} A \ones
  = \frac{1}{m} B \ones \in \simplex_k$. We show that $\sum_{j=1}^m
  \entropy_1(a_j) \le \sum_{j=1}^m \entropy_1(b_j)$ if and only if $\sum_{j=1}^m \entropy_2(a_j)
  \le \sum_{j=1}^m \entropy_2(b_j)$, that is,
  expression~\eqref{eqn:equivalent-order-equivalence} holds, by constructing
  appropriate distributions $P_{1:k}$ and $\prior$, then applying
  the universal equivalence of $\loss^{(1)}$ and $\loss^{(2)}$.

  Let $M_0$ be any integer large enough that $v_0 = \frac{1}{k}(1 +
  \frac{1}{M_0}) \ones - \frac{1}{M_0} v \in \R^k_+$, so that $v_0 \in
  \simplex_k$. Then define the vectors
  $\wt{a}_1 = v_0, \ldots, \wt{a}_{m M_0} = v_0$, and let
  \begin{align*}
    A\extend & = [a_1 ~ \cdots ~ a_m
      ~ \wt{a}_1 ~ \cdots ~ \wt{a}_{m M_0}]
    \in \R^{k \times M}_+
    ~~ \mbox{and} ~~ \\
    B\extend & = [b_1 ~ \cdots ~ b_m
      ~ \wt{a}_1 ~ \cdots ~ \wt{a}_{m M_0}]
    \in \R^{k \times M}_+,
  \end{align*}
  where $M = (M_0 + 1)m$. These satisfy $A\extend \ones = B\extend \ones =
  \frac{M}{k} \ones$. We let $a\extend$ and $b\extend$ denote the columns
  of these extended matrices.

  Now, let the spaces $\mc{X} = [M] \times [M]$ and $\mc{Z} = [M]$. Define
  quantizers $\quant_1, \quant_2 : \mc{X} \to \mc{Z}$ by $\quant_1(i, j)
  = i$ and $\quant_2(i,j) = j$. For $l = 1, \ldots, k$, define
  the distributions $P_l$ on $\mc{X}$ by
  \begin{equation*}
    P_l(i,j) = \frac{k^2}{M^2} \cdot a\extend_{il} b\extend_{jl},
    ~~ \mbox{so} ~~
    \sum_{j=1}^M P_l(i,j) = \frac{k}{M} a\extend_{il}
    \frac{k}{M} \sum_{j=1}^M b\extend_{jl} = \frac{k}{M} a\extend_{il}
  \end{equation*}
  and similarly $\sum_i P_l(i,j) = \frac{k}{M} b\extend_{jl}$.  Let $\prior
  = \frac{1}{k} \ones$ be the uniform prior distribution on the label $Y \in
  \{1, \ldots, k\}$, and note that the posterior probability
  \begin{equation*}
    \wt{\prior}(\quant_1^{-1}(\{i\}))
    = \left[\frac{\prior_l \sum_j P_l(i, j)}{
        \sum_{l'} \prior_{l'} \sum_j P_l(i, j)}\right]_{l=1}^k
    = \left[\frac{a\extend_{il}}{\sum_{l'} a\extend_{il'}}\right]_{l=1}^k
    = a\extend_i \in \simplex_k,
  \end{equation*}
  because $P_l(\quant_1^{-1}(i)) = \sum_j P_l(i,j) = \frac{k}{M}
  a_{il}\extend$, and similarly $\wt{\prior}(\quant_2^{-1}(\{j\})) =
  b\extend_j \in \simplex_k$. Taking the expectation over $X
  \sim \sum_{l=1}^k \prior_l P_l$, we have
  \begin{equation*}
    \E[\entropy_\loss(\wt{\prior}(\quant_1^{-1}(\quant_1(X))))]
    = \frac{1}{k} \sum_{i,l}
    P_l(\quant_1^{-1}(i)) \entropy_\loss(\wt{\prior}(\quant_1^{-1}(i)))
    = \frac{1}{M} \sum_{i=1}^M \entropy_\loss(a_i\extend),
  \end{equation*}
  because $\sum_l a\extend_{il} = 1$. Similarly,
  $\E[\entropy_\loss(\wt{\prior}(\quant_2^{-1}(\quant_2(X))))] = \frac{1}{M}
  \sum_{j=1}^M \entropy_\prior(b_j\extend)$.  Recalling the
  definitions~\eqref{eqn:information} and~\eqref{eqn:quantized-gap} of the
  (quantized) information associated with $\entropy$, we have
  $\quantinfo{\entropy}{\quant_1} = \entropy(\prior) - \frac{1}{M} \sum_{i = 1}^M
  \entropy(a_i\extend)$ and $\quantinfo{\entropy}{\quant_2} = \entropy(\prior) - \frac{1}{M}
  \sum_{i = 1}^M \entropy(b_i\extend)$.  Then the universal equivalence of losses
  $\loss^{(1)}$ and $\loss^{(2)}$ immediately implies
  for $\prior = \frac{1}{k} \ones$ that
  \begin{align*}
    \entropy_1(\prior)
    - \frac{1}{M} \sum_{i=1}^M  \entropy_1(a_i\extend)
    & \le
    \entropy_1(\prior) - \frac{1}{M} \sum_{i=1}^M
    \entropy_1(b_i\extend) ~~ \mbox{iff} \\
    \entropy_2(\prior)
    - \frac{1}{M} \sum_{i=1}^M  \entropy_2(a_i\extend)
    & \le
    \entropy_2(\prior) - \frac{1}{M} \sum_{i=1}^M
    \entropy_2(b_i\extend).
  \end{align*}
  Noting that $a_i\extend = b_i\extend$ for each $i \ge m+1$, we rearrange
  the preceding equivalent statements by adding $\frac{1}{M} \sum_{i \ge m +
    1} \entropy(a_i\extend)$ to each side to obtain that the $\entropy_i$ satisfy
  inequality~\eqref{eqn:equivalent-order-equivalence}.
\end{proof}

For $f$-divergences, a parallel result is possible; as the techniques are
similar to those we use to prove Lemma~\ref{lemma:order-univ-entropy}
(by constructing an explicit discrete space $\mc{X}$ and quantizers
$\quant$), we defer the proof to Supplementary
\S~\ref{sec:proof-order-univ}.
\begin{lemma}
  \label{lemma:order-univ}
  If losses $\loss^{(1)}$ and $\loss^{(2)}$ are universally equivalent for
  the prior $\prior$ (Def.~\ref{def:equivalent-losses}) and lower-bounded,
  the corresponding $f$-divergences $f_{\loss^{(1)}, \prior}$ and
  $f_{\loss^{(2)}, \prior}$ of construction~\eqref{eqn:f-from-loss} are
  order equivalent.
\end{lemma}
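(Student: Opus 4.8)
The plan is to mirror the argument for Lemma~\ref{lemma:order-univ-U}, but now using the representation of quantized information as a quantized $f$-divergence in~\eqref{eqn:quantized-gap} in place of the uncertainty-function representation. Abbreviate $f_j := f_{\loss^{(j)},\prior}$ for $j=1,2$; each is closed convex on $\R^{k-1}_+$ with $f_j(\ones)=0$, and by~\eqref{eqn:quantized-gap} (using that $f_{\loss,\prior}$ does not depend on the quantizer) we have $\quantinfo{U_{\loss^{(j)}}}{\quant}=\genfdiv{f_j}{P_1,\ldots,P_{k-1}}{P_k\mid\quant}$ for every $\quant$ and every choice of distributions. Given $m\in\N$ and matrices $A,B\in\R^{(k-1)\times m}_+$ with columns $a_1,\dots,a_m$ and $b_1,\dots,b_m$ satisfying $A\ones=B\ones$, I want to manufacture a single finite space $\mc X$, a single family $P_1,\dots,P_k$ of distributions on it, and two quantizers $\quant_1,\quant_2$ so that
\begin{equation*}
  \genfdiv{f_j}{P_1, \ldots, P_{k-1}}{P_k \mid \quant_1}
  = \frac{1}{M} \sum_{p=1}^M f_j(a\extend_p),
  \qquad
  \genfdiv{f_j}{P_1, \ldots, P_{k-1}}{P_k \mid \quant_2}
  = \frac{1}{M} \sum_{q=1}^M f_j(b\extend_q),
\end{equation*}
where $a\extend_\bullet$ and $b\extend_\bullet$ are the $a_\bullet$ and $b_\bullet$ together with a common block of ``padding'' columns. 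Universal equivalence of $\loss^{(1)}$ and $\loss^{(2)}$ for $\prior$ (Def.~\ref{def:equivalent-losses}), applied to this $\mc X$, $P_{1:k}$, $\quant_1$, $\quant_2$, then yields $\sum_p f_1(a\extend_p)\le\sum_q f_1(b\extend_q)$ iff $\sum_p f_2(a\extend_p)\le\sum_q f_2(b\extend_q)$; cancelling the common padding contributions from both sides of each inequality leaves exactly~\eqref{eqn:equivalent-order-equivalence} for $f_1,f_2$, i.e.\ order equivalence over $\R^{k-1}_+$.

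For the construction I would pick an integer $M\ge\max\{m,\ \max_{i\le k-1}(A\ones)_i\}$ (and large, for the reason in the next paragraph), append $M-m$ identical columns $\tilde a:=\frac{1}{M-m}(M\ones-A\ones)\in\R^{k-1}_+$ to both $A$ and $B$, obtaining $A\extend,B\extend\in\R^{(k-1)\times M}_+$ whose every row sums to $M$, and then adjoin one further row of all ones (a ``$k$-th coordinate''), getting $\hat A\extend,\hat B\extend\in\R^{k\times M}_+$ with $\hat A\extend\ones=\hat B\extend\ones=M\ones$. For each $i\in[k]$ the $i$-th rows $\hat\alpha^{(i)},\hat\beta^{(i)}\in\R^M_+$ obey $\ones^T\hat\alpha^{(i)}=M=\ones^T\hat\beta^{(i)}$, so Lemma~\ref{lemma:grid} provides $Z^{(i)}\in\R^{M\times M}_+$ with $Z^{(i)}\ones=\hat\alpha^{(i)}$ and $(Z^{(i)})^T\ones=\hat\beta^{(i)}$ (for $i=k$ simply take $Z^{(k)}=\frac1M\ones\ones^T$). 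Now set $\mc X=[M]\times[M]$, $P_i(\{(p,q)\})=\frac1M Z^{(i)}_{pq}$ (each a probability measure since $\ones^T Z^{(i)}\ones=M$), and $\quant_1(p,q)=p$, $\quant_2(p,q)=q$. Then $P_i(\quant_1^{-1}(p))=\frac1M\hat a\extend_{ip}$, so $P_k(\quant_1^{-1}(p))=\frac1M$ and the likelihood-ratio vector on that cell is exactly $a\extend_p\in\R^{k-1}_+$; the definition of the quantized $f$-divergence then gives the first displayed identity, and the transpose relation $(Z^{(i)})^T\ones=\hat\beta^{(i)}$ gives the second with the $b\extend_q$.

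I expect the main obstacle to be the bookkeeping around domains rather than any conceptual difficulty. The final cancellation is legitimate only when the common padding contribution $\sum_{\tilde p} f_j(\tilde a)$ is finite, i.e.\ $\tilde a\in\dom f_1\cap\dom f_2$; since $\tilde a=\frac{1}{M-m}(M\ones-A\ones)\to\ones$ as $M\to\infty$ and $f_j(\ones)=0$, this holds for all large $M$ once one knows $\ones$ lies in the (relative) interior of each effective domain, and more carefully one should first argue (or restrict to the regime in which) $\dom f_1=\dom f_2$, so that ``universally equivalent'' compares genuinely comparable objects. Columns $a_p,b_q$ on or beyond the boundary of the domains also require the usual perspective/closure handling of~\eqref{eqn:closed-perspective}. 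These are precisely the technical nuances this section defers to an appendix; modulo them, the two displayed identities combined with Definition~\ref{def:equivalent-losses} and the relation~\eqref{eqn:quantized-gap} complete the proof.
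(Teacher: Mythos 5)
Your construction is essentially identical to the paper's: the product space $[M]\times[M]$ with coordinate-projection quantizers, Lemma~\ref{lemma:grid} applied row-by-row to realize the extended columns as marginals, uniform $P_k$, and cancellation of the common padding; the only cosmetic differences are how you distribute the padding mass across the added columns and the normalization of the $Z^{(i)}$. The domain concerns you flag at the end are already dispatched by the lower-boundedness hypothesis, which forces $\dom f_{\loss^{(j)},\prior} = \R^{k-1}_+$ via the construction~\eqref{eqn:f-from-loss}, so no care about $\tilde a$ approaching $\ones$ or boundary columns is needed.
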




\subsection{Characterization of the order equivalence of convex functions}
\label{sec:char}

Lemmas~\ref{lemma:order-univ-entropy} and~\ref{lemma:order-univ} illustrate the
intrinsic relationship between the universal equivalence
(Def.~\ref{def:equivalent-losses}) of losses and the order equivalence
(Def.~\ref{def:order-equivalent}) of their associated generalized entropies
and $f$-divergences. Therefore, it is natural to ask when
convex functions are order equivalent. The lemma
below characterizes this order equivalence, and coupled
with Lemmas~\ref{lemma:order-univ-entropy} and~\ref{lemma:order-univ},
it immediately implies Theorems~\ref{theorem:loss-equivalent-entropy}
and~\ref{theorem:loss-equivalent}.

\begin{lemma}
  \label{lemma:equivalence-subset}
  Let $f_1, f_2 : \Omega \to \R$ be closed convex functions, where $\Omega
  \subset \R^k$ is a convex set. Then $f_1$ and $f_2$ are order equivalent
  on $\Omega$ if and only if there exist $a > 0$, $b \in \R^k$, and $c \in
  \R$ such that for all $t \in \Omega$
  \begin{equation}
    \label{eqn:relation-f-order-equivalent}
    f_1(t) = a f_2(t) + b^Tt + c.
  \end{equation}
\end{lemma}

While the proof of Lemma~\ref{lemma:equivalence-subset} is complex,
we provide a partial proof highlighting the most important parts of the
argument, deferring technical details to the supplement. The essential idea
is that Lemma~\ref{lemma:equivalence-subset} holds for simplices
(and so it certainly holds for $\entropy_\loss$); we can
then cover any convex set $\Omega$ with a number of overlapping simplices
to extend the result to all of $\Omega$, which
we do fully in Supplement~\ref{sec:proof-lemma-eq-sub}.
To demonstrate Lemma~\ref{lemma:equivalence-subset} for simplices,
we require
\begin{definition}
  \label{def:affine-independent}
  Vectors $u_0, u_1, \ldots u_m$ are \emph{affinely
    independent} if
  \begin{equation*}
    u_1 - u_0, ~ u_2 - u_0, ~ \ldots, ~ u_m - u_0,
  \end{equation*}
  are linearly independent. A set $E \subset \R^k$ is a \emph{simplex} if $E
  = \conv\{u_0, u_1, \ldots, u_k\}$ where $u_0, \ldots, u_k$ are affinely
  independent.
\end{definition}
\noindent
Then the essential special case of Lemma~\ref{lemma:equivalence-subset}
is the following result.
\begin{lemma}
  \label{lemma:equivalence-on-simplex}
  Let $E = \conv \{u_0, \ldots, u_k\} \subset \Omega$ where $u_0, \ldots, u_k$
  are affinely independent. If $f_1$ and $f_2$ are order equivalent,
  then there exist $a > 0$, $b \in \R^k$, and $c \in \R$ such that
  \begin{equation*}
    f_1(t) = a f_2(t) + b^Tt + c
    ~~ \mbox{for~all~} t \in E.
  \end{equation*}
\end{lemma}

The proof of Lemma~\ref{lemma:equivalence-on-simplex} proceeds in a series
of intermediate results, which we provide in turn,
deferring proofs to
Supplement~\ref{sec:proof-auxiliary-simplex-lemmas}.
Our first step is to argue that we need only prove equivalence results
for convex functions on dense subsets of their domains.
\begin{lemma}[\cite{HiriartUrrutyLe93ab}, Prop.~IV.1.2.5]
  \label{lemma:equal-functions}
  Let $f_1, f_2 : \Omega \to \R$ be closed convex
  and satisfy $f_1(t)
  = f_2(t)$ for $t$ in a dense subset of $\Omega$. Then
  $f_1 = f_2$ on $\Omega$.
\end{lemma}

The first technical lemma we prove is essentially a direct consequence of
the definition of order equivalence.
\begin{lemma}
  \label{lemma:order-equivalence-rationals}
  Let $u_1, \ldots, u_m \in \Omega$, $\alpha \in \Q^m$ satisfy
  $\ones^T\alpha = 1$, and $v \in
  \Omega$ with $v = \sum_{i=1}^m \alpha_i u_i$. If $f_1, f_2 : \Omega \to \R$
  are order equivalent, then
  \begin{equation*}
    \sum_{i=1}^m \alpha_i f_1(u_i) \le f_1(v)
    ~~ \mbox{if and only if} ~~
    \sum_{i=1}^m \alpha_i f_2(u_i) \le f_2(v).
  \end{equation*}
\end{lemma}
\noindent
Thus
if $\alpha \in
\Q^n$ satisfies $\ones^T\alpha = 1$ and $u_1, \ldots, u_n \in \Omega$,
then
\begin{equation}
  \label{eqn:order-equivalent-equality}
  f_1\left(\sum_{i=1}^n \alpha_i u_i \right)
  = \sum_{i=1}^n \alpha_i f_1(u_i)
  ~~ \mbox{iff} ~~
  f_2\left(\sum_{i=1}^n \alpha_i u_i\right)
  = \sum_{i=1}^n \alpha_i f_2(u_i).
\end{equation}

The next lemma shows that we can force
equality~\eqref{eqn:relation-f-order-equivalent} to hold for the $k + 1$
extreme points and centroid of any simplex in $\R^k$; it is intuitive
because there are $k + 2$ free parameters in the choices
of $a > 0$, $b \in \R^k$, and $c \in \R$.
\begin{lemma}
  \label{lemma:make-same-on-basis}
  Let $f_1, f_2 : \Omega \to \R$
  be closed convex and let $u_0,\ldots, u_k \in \Omega$ be
  affinely independent. There exist $a > 0, b \in \R^k$, and $c$ such
  that $f_1(u) = a f_2(u) + b^T u + c$ for $u \in \{u_0, \ldots, u_k,
  \ucent\}$, where $\ucent = \frac{1}{k+1} \sum_{i=0}^k u_i$.
\end{lemma}
\noindent
Lastly, we have the following
characterization of the linearity of convex functions over convex
hulls.
\begin{lemma}
  \label{lemma:center-makes-linear}
  Let $f : \Omega \to \R$ be convex with $u_1, \ldots, u_m \in \Omega$ and
  $\ucent = \frac{1}{m} \sum_{i=1}^m u_i$. If
  $f(\ucent) = \frac{1}{m} \sum_{i=1}^m f(u_i)$, then
  \begin{equation*}
    f\left(\sum_{i=1}^m \lambda_i u_i\right) = \sum_{i=1}^m \lambda_i f(u_i)
    ~~ \mbox{for~all~} \lambda \in \R^m_+
    ~ \mbox{with}~ \ones^T \lambda = 1.
  \end{equation*}
\end{lemma}

With the four
lemmas~\ref{lemma:equal-functions}--\ref{lemma:center-makes-linear}, we can
now prove Lemma~\ref{lemma:equivalence-on-simplex}.  By rotating with $u_i -
u_0$ and shifting by $u_0$, it is no loss of generality to assume that the
functions $f_i$ are defined on $V = \{v \in \R^k_+ \mid \ones^T v \le 1\}$,
so that $f_1$ and $f_2$ are continuous, defined, convex, and order
equivalent on $V$. We make one further reduction.
%
%
Let $e_i \in \R^k$ for $1 \leq i \leq k$ be the standard basis for $\R^k$
and $e_0 = \zeros$ be shorthand for the all-zeros vector. Further, let
$\ecenter = \frac{1}{k+1} \sum_{i=0}^k e_i$ be the centroid of $V$ (so $V
= \conv\{e_0, \ldots, e_k\}$).  Lemma~\ref{lemma:make-same-on-basis}
guarantees the existence of $a > 0, b\in \R^k$, and $c \in \R$ such that
\begin{equation*}
  f_1(v) = a f_2(v) + b^T v + c
  ~~ \mbox{for} ~ v \in \{e_0, e_1, \ldots, e_k, \ecenter\}.
\end{equation*}
Now, let $h_1(v) = f_1(v)$ and $h_2(v) = a f_2(v) + b^T v + c$, so $h_1$
and $h_2$ are convex, order equivalent on $V$, and satisfy $h_1(v) =
h_2(v)$ for $v \in \{e_0, \ldots, e_k ,\ecenter\}$.
Thus, Lemma~\ref{lemma:equivalence-on-simplex} is equivalent to showing
that if $h_1, h_2$ are convex, order equivalent,
and equal on the extreme points and centroid of $V$,
then
\begin{equation}
  h_1(v) = h_2(v) ~~ \mbox{for~} v \in V = \{v \in \R^k_+ \mid
  \ones^Tv \le 1\}.
  \label{eqn:h-equal-goody}
\end{equation}
We divide our discussion into two cases.

\paragraph{Linear case} Suppose that $h_1(\ecenter) =
\frac{1}{k+1}\sum_{i=0}^k h_1(e_i)$.
Then by order equivalence of $h_1$ and
$h_2$ (Eq.~\eqref{eqn:order-equivalent-equality}) we have
$h_2(\ecenter) = \frac{1}{k+1} \sum_{i=0}^{k} h_2(e_i)$.
Lemma~\ref{lemma:center-makes-linear} thus implies that $h_1$ and $h_2$ are
linear on $V = \conv\{e_0, \ldots, e_k\}$, equal on
the vertices of $V$, and hence equal on its interior.

\paragraph{Nonlinear case} By convexity we have
$h_1(\ecenter) < \frac{1}{k+1} \sum_{i=0}^k h_1(e_i)$, and order
equivalence (Lemma~\ref{lemma:order-equivalence-rationals}) implies
$h_2(\ecenter) < \frac{1}{k+1} \sum_{i=0}^k h_2(e_i)$.  For $v \in V =
\conv\{e_0, \ldots, e_k\}$, we use $v_0 = 1 - \ones^T v$ for shorthand, so
we may write $v = \sum_{i=0}^k v_i e_i$ and have
$[v_0 ~ v_1 ~ \cdots ~ v_k]^T \in \simplex_{k+1}$.  Now, fix an
arbitrary $v \in V \cap \Q^k$. We wish to show that $h_1(v) =
h_2(v)$.  To that end we consider consider the gaps due convexity of
$h_j(\ecenter)$ to the values of $h_j(e_i)$ \emph{relative} to those from
$h_j(v)$ to $h_j(e_i)$, defining the linear functions
$\varphi_j : [0, 1] \to \R$ by
\begin{equation*}
  \varphi_j(r)
  \defeq
  (1 - r)\left[h_j(\ecenter)
    - \frac{1}{k + 1} \sum_{i = 0}^k h_j(e_i)\right]
  + r \left[\sum_{i = 0}^k v_i h_j(e_i)
    - h_j(v) \right]
\end{equation*}
for $j = 1, 2$.
Then
\begin{equation*}
  \varphi_j(0) = h_j(\ecenter) - \frac{1}{k+1} \sum_{i=0}^k h_j(e_i) < 0
\end{equation*}
by assumption, and by convexity,
\begin{equation*}
  \varphi_j(1) = \sum_{i = 0}^k v_i h_j(e_i) - h_j(v) \ge 0.
\end{equation*}
The key is that the order equivalence of $h_1$ and $h_2$ on
$V$ implies that
\begin{equation}
  \label{eqn:equal-signs-super}
  \sign(\varphi_1(r)) = \sign(\varphi_2(r))
  ~~ \mbox{for~} r \in [0, 1],
\end{equation}
so that $\varphi_1$ and $\varphi_2$ have the same zero crossing
$r\opt > 0$, i.e.\
there exists $0 < r\opt \le 1$ with $\varphi_1(r\opt) = \varphi_2(r\opt)
= 0$. (We prove equality~\eqref{eqn:equal-signs-super} presently.)
At this $r\opt > 0$, we find
\begin{align*}
  0 & = \varphi_1(r\opt) - \varphi_2(r\opt)
  = -r\opt h_1(v) + r\opt h_2(v),
\end{align*}
where we use that $h_1(e_i) = h_2(e_i)$ for $i = 0, \ldots, k$
and $h_1(\ecenter) = h_2(\ecenter)$. 
That is, $h_1(v) = h_2(v)$, and as $v \in V \cap \Q^k$ is arbitrary and
$\Q^k$ is dense,
Lemma~\ref{lemma:equal-functions} extends the equality $h_1 = h_2$
to all of $V$.
Expression~\eqref{eqn:h-equal-goody} holds.

Returning to the sign equivalence~\eqref{eqn:equal-signs-super},
for $r > 0$, we may divide $\varphi_j(r)$ by $r$,
and we have
$\varphi_j(r) \le 0$ if and only if
\begin{equation*}
  \frac{1 - r}{r} \left[h_j(\ecenter)
    - \frac{1}{k+1}
    \sum_{i = 0}^k h_j(e_i)\right]
  + \sum_{i = 0}^k v_i h_j(e_i) \le h_j(v).
\end{equation*}
Defining $\alpha_i = v_i - \frac{1 - r}{r(k + 1)} \in \Q$ for $i = 0,
\ldots, k$ and $\alpha_{k+1} = \frac{1-r}{r}$, the inequality
$\varphi_j(r) \le 0$ is equivalent to $\sum_{i=0}^k \alpha_i h_j(e_i) +
\alpha_{k+1} h_j(\ecenter) \le h_j(v)$.  A calculation yields
$\ones^T\alpha = 1$ and $\sum_{i = 0}^k \alpha_i e_i + \alpha_{k + 1}
\ecenter = v$, and applying Lemma~\ref{lemma:order-equivalence-rationals}
immediately yields that $\varphi_1(r) \le 0$ if and only if $\varphi_2(r)
\le 0$ for all $r \in \openleft{0}{1} \cap \Q$.  Noting that $\varphi_1(0)
< 0$ and $\varphi_2(0) < 0$, we obtain
equality~\eqref{eqn:equal-signs-super}.

\section{Discussion}
\label{sec:conclusions}


Rather than recapitulating our contributions, we point out a few directions
we believe will prove interesting for further study.  While
Corollary~\ref{corollary:classification-calibration-restricted-families}
shows that some convex losses are surrogate-risk consistent even with
restricted families of classifiers, it does not apply to the practical case
in which the collection of discriminants $\discfunc$ is a (convex subset of
a) finite-dimensional vector space. This longstanding problem certainly
deserves further work.  Another direction, a bit further afield, is to
investigate the links between this work and objective Bayesian approaches
and reference priors~\cite{Bernardo05, Berger06}.  In this line of work, one
has a family $\{P_\theta\}_{\theta \in \Theta}$ of probability models on an
observation space $\mc{X}$ and before performing inference chooses a prior
$\prior$ on $\theta$ to maximize $I_\prior(X; \theta)$, the (Shannon)
information between $X \sim P_\theta$ and $\theta \sim \prior$. For
tasks \emph{other} than minimizing log
loss, it may be sensible to use a notion of information and entropy
corresponding to the desired loss. Our notions of loss equivalence,
including construction of convex losses equivalent to non-convex losses,
could provide insight in such situations.

\bibliographystyle{abbrvnat}
\bibliography{bib}

\ifdefined\separatesupplement
\else
\newpage

\appendix


\section{Proofs of classification calibration results}
\label{sec:calibration-proofs}

In this section, we prove Propositions~\ref{proposition:calibration}
and~\ref{proposition:zo-uncertainty-calibrated}. Before proving
the propositions proper, we state several technical lemmas
and enumerate continuity properties of Fenchel conjugates that
will prove useful. We also collect a few important definitions related to
convexity and norms here, which we use without comment in this appendix.
For a norm $\norm{\cdot}$ on $\R^k$, we recall the definition of the dual
norm $\dnorm{\cdot}$ as $\dnorm{y} = \sup_{\norm{u} \le 1} u^T y$.
For a convex function $f : \R^k \to \extendedR$, we let
\begin{equation*}
  \partial f(u) = \{g \in \R^k \mid f(v)
  \ge f(u) + g^T(v - u) ~ \mbox{for~all~} v \in \R^k\}
\end{equation*}
denote the subgradient set of $f$ at the point $u$. This set is non-empty if
$u \in \relint \dom f$ (see~\cite[Chapter VI]{HiriartUrrutyLe93ab}).

\subsection{Technical preliminaries}

We provide some background on convex functions. We
recall Definition~\ref{def:uniform-convex} of uniform convexity,
that $f$ is $(\lambda, \kappa, \norm{\cdot})$-uniformly convex over
$C \subset \R^k$ if it is closed and for all $t \in [0, 1]$
and $u_0, u_1 \in C$ we have
\begin{equation}
  \begin{split}
    \lefteqn{f(t u_0 + (1 - t) u_1)} \\
    & \le
    t f(u_0) + (1 - t) f(u_1)
    - \frac{\lambda}{2} t(1 - t) \norm{u_0 - u_1}^\kappa
    \left[(1 - t)^{\kappa - 1} + t^{\kappa - 1}\right].
  \end{split}
    \label{eqn:uniform-convexity}
\end{equation}
We state a related definition of smoothness.
\begin{definition}
  \label{def:smooth-functions}
  A function $f$ is \emph{$(L, \beta, \norm{\cdot})$-smooth}
  if it has $\beta$-H\"older continuous gradient
  with respect to the norm $\norm{\cdot}$, meaning that
  \begin{equation*}
    \dnorm{\nabla f(u_0) - \nabla f(u_1)} \le L \norm{u_0 - u_1}^\beta
    ~~ \mbox{for~} u_0, u_1 \in \dom f.
  \end{equation*}
\end{definition}

Our first technical lemma is an equivalence result for uniform
convexity.
\begin{lemma}
  \label{lemma:equivalent-uniform-convexity}
  Let $f : \Omega \to \R$, where $f$ is closed convex and $\Omega$ is a closed
  convex set. Then $f$ is $(\lambda, \kappa, \norm{\cdot})$-uniformly convex
  over $\Omega$ if and only if for $u_0 \in \relint \Omega$ and all $u_1$,
  \begin{equation}
    \label{eqn:subgrad-uniform-convexity}
    f(u_1) \ge f(u_0) + s_0^T(u_1 - u_0)
    + \frac{\lambda}{2} \norm{u_0 - u_1}^\kappa
    ~ \mbox{for~} s_0 \in \partial f(u_0).
  \end{equation}
  If inequality~\eqref{eqn:uniform-convexity} holds, then
  inequality~\eqref{eqn:subgrad-uniform-convexity} also holds for any points
  $u_0 \in \Omega$ and $s_0$ such that $\partial f(u_0) \neq \varnothing$
  and $s_0 \in \partial f(u_0)$.
\end{lemma}
\noindent
See Section~\ref{sec:proof-equivalent-uniform-convexity} for a proof of this
lemma.  There is also a natural duality between uniform convexity and
smoothness of a function's Fenchel conjugate $f^*(v) = \sup_u \{v^T u -
f(u)\}$; such dualities are common~\cite[Ch.~X.4]{HiriartUrrutyLe93ab}.
\begin{lemma}
  \label{lemma:uniform-to-holder}
  Let $\Omega \subset \R^k$ be a closed convex set and $f : \Omega \to \R$ be
  $(\lambda, \kappa, \norm{\cdot})$-uniformly convex over $\Omega$. Then $f^*$ is
  $(\lambda^{-\frac{1}{\kappa-1}}, \frac{1}{\kappa-1},
  \dnorm{\cdot})$-smooth (Def.~\ref{def:smooth-functions}) over $\dom f^* =
  \R^k$.
\end{lemma}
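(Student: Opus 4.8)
The plan is to dualize the uniform convexity inequality into a H\"older-continuity bound on $\nabla f^*$, following the classical correspondence between moduli of convexity and smoothness (cf.~\cite[Chapter X.4]{HiriartUrrutyLe93b}). I would first extend $f$ to all of $\R^k$ by setting $f(x) = +\infty$ for $x \notin \Omega$; this leaves $f$ closed convex, does not change the conjugate $f^*$, and the inequality of Definition~\ref{def:uniform-convex} still holds on $\dom f = \Omega$. Next I would record two facts about $f^*$. (i) $\dom f^* = \R^k$ and the supremum defining $f^*(y)$ is attained: applying the subgradient form~\eqref{eqn:subgrad-uniform-convexity} of Lemma~\ref{lemma:equivalent-uniform-convexity} at a fixed $x_0 \in \relint \Omega$ gives $f(x) \ge f(x_0) + s_0^T(x - x_0) + \frac{\lambda}{2}\norm{x - x_0}^\kappa$, whose right-hand side is superlinear in $\norm{x}$ since $\kappa \ge 2$, so $x \mapsto y^Tx - f(x)$ is coercive and upper semicontinuous for every $y$. (ii) $f^*$ is differentiable everywhere with $\nabla f^*(y) = x_y$, the (unique, by strict convexity of $f$, which uniform convexity implies) maximizer in $\sup_x\{y^Tx - f(x)\}$; this is the standard conjugate-differentiability lemma.

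With this in place, I would derive the key monotonicity estimate. Fix $y_1, y_2 \in \R^k$ and set $x_i = \nabla f^*(y_i)$. Optimality of $x_i$ says exactly that $f(x) \ge f(x_i) + y_i^T(x - x_i)$ for all $x$, i.e.\ $y_i \in \partial f(x_i)$. The second part of Lemma~\ref{lemma:equivalent-uniform-convexity} then upgrades each of these to
\[
  f(x_2) \ge f(x_1) + y_1^T(x_2 - x_1) + \tfrac{\lambda}{2}\norm{x_1 - x_2}^\kappa,
  \qquad
  f(x_1) \ge f(x_2) + y_2^T(x_1 - x_2) + \tfrac{\lambda}{2}\norm{x_1 - x_2}^\kappa .
\]
Adding and rearranging yields $(y_1 - y_2)^T(x_1 - x_2) \ge \lambda \norm{x_1 - x_2}^\kappa$.

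Finally I would combine this with the dual-norm inequality $(y_1 - y_2)^T(x_1 - x_2) \le \dnorm{y_1 - y_2}\,\norm{x_1 - x_2}$, obtaining $\lambda \norm{x_1 - x_2}^{\kappa - 1} \le \dnorm{y_1 - y_2}$ (the smoothness bound being trivial when $x_1 = x_2$), hence $\norm{\nabla f^*(y_1) - \nabla f^*(y_2)} = \norm{x_1 - x_2} \le \lambda^{-\frac{1}{\kappa-1}} \dnorm{y_1 - y_2}^{\frac{1}{\kappa-1}}$. Since the norm bidual to $\dnorm{\cdot}$ is $\norm{\cdot}$ itself, this is precisely the assertion that $f^*$ is $(\lambda^{-\frac{1}{\kappa-1}}, \frac{1}{\kappa-1}, \dnorm{\cdot})$-smooth in the sense of Definition~\ref{def:smooth-functions}, and $\dom f^* = \R^k$ as shown. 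The only genuinely delicate step is the first paragraph: one must handle the case where the maximizer $x_y$ lies on $\partial \Omega$, so that $\nabla f^*(y) = x_y$ is justified via attainment plus strict convexity rather than a naive first-order condition, and check that $y_i$ indeed lies in $\partial f(x_i)$ of the extended function so that Lemma~\ref{lemma:equivalent-uniform-convexity} is applicable.
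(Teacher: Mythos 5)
Your proposal is correct and follows essentially the same route as the paper: establish $\dom f^* = \R^k$ via coercivity, get differentiability of $f^*$ from strict convexity, write $y_i \in \partial f(x_i)$ for $x_i = \nabla f^*(y_i)$, add the two instances of the subgradient form~\eqref{eqn:subgrad-uniform-convexity} of Lemma~\ref{lemma:equivalent-uniform-convexity}, and conclude by H\"older's inequality. Your extra care about extending $f$ by $+\infty$ and about maximizers on $\partial\Omega$ is a welcome refinement but does not change the argument.
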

\noindent
See Section~\ref{sec:proof-uniform-to-holder} for a proof of
Lemma~\ref{lemma:uniform-to-holder}.  We also have two results on the
properties of smooth functions, whose proofs we provide in
Sections~\ref{sec:proof-holder-above}
and~\ref{sec:proof-get-zero-gradients}, respectively.
\begin{lemma}
  \label{lemma:holder-above}
  Let $f$ be $(L, \beta, \norm{\cdot})$ smooth. Then
  \begin{equation*}
    f(u_1) \le f(u_0) + \nabla f(u_0)^T(u_1 - u_0) + \frac{L}{\beta + 1}
    \norm{u_0 - u_1}^{\beta + 1}.
  \end{equation*}
\end{lemma}
\begin{lemma}
  \label{lemma:get-zero-gradients}
  Let $f$ be $(L, \beta, \norm{\cdot})$ smooth over $\R^k$ and
  $\inf_x f(x) > -\infty$. If the sequence $u_n$ satisfies
  $\lim_n f(u_n) = \inf_x f(x)$, then
  $\nabla f(u_n) \to 0$.
\end{lemma}

\subsection{Proof of Proposition~\ref{proposition:calibration}}

We state two intermediate lemmas before proving
Proposition~\ref{proposition:calibration}.
\begin{lemma}
  \label{lemma:symmetric-differentiability}
  If $\entropy$ is symmetric, closed and strictly concave, then $(-\entropy)^*$ is continuously
  differentiable on $\R^k$. If
  $\alpha_i \ge \alpha_j$, then
  $p = \nabla (-\entropy)^*(\alpha)$ satisfies $p_i \ge p_j$.
\end{lemma}
\begin{proof}
  As $\entropy$ is strictly concave and $(-\entropy)^*(\alpha) = \sup_{\prior \in
    \simplex_k} \{\prior^T \alpha + \entropy(\prior)\} < \infty$ for all $\alpha$
  (suprema of closed concave functions over compact sets are attained),
  so $(-\entropy)^*$ is continuously differentiable by standard results in
  convex analysis~\cite[Thm~X.4.1.1]{HiriartUrrutyLe93ab}, and
  $\nabla (-\entropy)^*(\alpha) = \argmax_{p \in \simplex_k} \{p^T\alpha +
  \entropy(p)\}$.

  Now let $\alpha$ satisfy $\alpha_i \ge \alpha_j$. As $\nabla
  (-\entropy)^*(\alpha) = \argmax_{p \in \simplex_k} \{p^T\alpha + \entropy(p)\}$, let us
  assume for the sake of contradiction that $p_i < p_j$. Then letting $A$ be
  the permutation matrix swapping entries $i$ and $j$, the vector $p' = A p$
  satisfies $\entropy(p') = \entropy(p)$ but $\entropy(\half(p' + p)) > \half \entropy(p') + \half \entropy(p)
  = \entropy(p) = \entropy(p')$, and
  \begin{equation*}
    \alpha^T p - \alpha^T p' =
    \alpha_i p_i - \alpha_i p_j + \alpha_j p_j - \alpha_j p_i
    = (\alpha_i - \alpha_j) (p_i - p_j) \le 0.
  \end{equation*}
  Thus we have $-\alpha^T p \ge -\alpha^T p'$, and so
  \begin{equation*}
    \half \alpha^T (p + p') + \entropy\left(\half p + \half p'\right)
    \ge -\alpha^T p + \entropy\left(\half p + \half p'\right)
    > \alpha^Tp + \entropy(p),
  \end{equation*}
  a contradiction to the assumed optimality of $p$. We must have
  $p_i \ge p_j$ whenever $\alpha_i \ge \alpha_j$.
\end{proof}

\begin{lemma}
  \label{lemma:attained-infimum}
  If $\entropy$ is symmetric and $\sum_{i=1}^k \prior_i \loss(\alpha\opt,i)
  = \inf_\alpha \sum_{i=1}^k \prior_i \loss(\alpha, i)$,
  then $\prior_i > \prior_j$ implies that
  $\alpha\opt_i \ge \alpha\opt_j$.
  If $\entropy$ is strictly concave, then $\alpha\opt_i > \alpha\opt_j$.
\end{lemma}
\begin{proof}
  Let $\prior$ satisfy $\prior_i > \prior_j$ as assumed in the lemma, and
  suppose that $\alpha\opt_i < \alpha\opt_j$ for the sake of
  contradiction. Let $A$ be the permutation matrix that swaps $\alpha\opt_i$
  and $\alpha\opt_j$. Then $(-\entropy)^*(A \alpha\opt) = (-\entropy)^*(\alpha\opt)$, and
  $(-\entropy)^*$ is also symmetric, and
  \begin{equation*}
    \sum_{i=1}^k \prior_i \loss(A \alpha\opt, i)
    - \sum_{i=1}^k \prior_i \loss(\alpha\opt, i)
    = -\prior^T A \alpha\opt + \prior^T \alpha\opt
    = (\prior_i - \prior_j) (\alpha\opt_i - \alpha\opt_j) < 0,
  \end{equation*}
  a contradiction to the optimality of $\alpha\opt$.
  If $\entropy$ is strictly concave,
  Lemma~\ref{lemma:symmetric-differentiability} implies that for $\alpha\opt$
  minimizing $\sum_{i=1}^k \prior_i \loss(\alpha, i)$, we have $\prior =
  \nabla (-\entropy)^*(\alpha\opt)$. Moreover, by
  Lemma~\ref{lemma:symmetric-differentiability},
  if $\prior_i > \prior_j$ we must have $\alpha\opt_i > \alpha\opt_j$.
\end{proof}

\begin{proof-of-proposition}[\ref{proposition:calibration}]
  If the infimum in $\sum_{i=1}^k \prior_i \loss(\alpha, i)$
  is attained, then Lemma~\ref{lemma:attained-infimum} gives the result.
  Otherwise, recall that $\entropy(\prior) = \inf_\alpha \{\sum_{i=1}^k \prior_i
  \loss(\alpha, i)\} > -\infty$, and let $\prior_i > \prior_j$. Let
  $\alpha^{(m)}$ be any sequence such that $\sum_{i=1}^k \prior_i
  \loss(\alpha^{(m)}, i) \to \entropy(\prior)$.
  
  Using that $\entropy$ is uniformly concave,
  we have $\nabla (-\entropy)^*$ is H\"older continuous over $\R^k$ (recall
  Lemma~\ref{lemma:uniform-to-holder}).  This implies that that $\prior -
  \nabla (-\entropy)^*(\alpha^{(m)}) \to 0$ as $m \to \infty$ (recall
  Lemma~\ref{lemma:get-zero-gradients}), or
  \begin{equation*}
    \lim_{m \to \infty} \nabla (-\entropy)^*(\alpha^{(m)}) = \prior.
  \end{equation*}
  Now, by
  Lemma~\ref{lemma:symmetric-differentiability}, for any $p^{(m)} = \nabla
  (-\entropy)^*(\alpha^{(m)})$, we have that $\alpha_i^{(m)} \ge \alpha_j^{(m)}$
  implies $p^{(m)}_i \ge p^{(m)}_j$. Thus, if $\prior_i > \prior_j$, it must
  be the case that eventually we have $\alpha^{(m)}_i >
  \alpha^{(m)}_j$. Moreover, if $\liminf |\alpha^{(m)}_i - \alpha^{(m)}_j| =
  0$, then we must have $\liminf |p^{(m)}_i - p^{(m)}_j| = 0$, which would
  contradict that $\prior_i > \prior_j$, as we have $p^{(m)}_i - p^{(m)}_j
  \to \prior_i - \prior_j$. We thus find that
  $\liminf_m (\alpha_i^{(m)} - \alpha_j^{(m)}) > 0$ if the sequence
  tends to the infimum $\entropy(\prior)$, which implies that
  \begin{equation*}
    \inf_\alpha \left\{\sum_{i=1}^k \prior_i \loss(\alpha, i)
    : \alpha_i \le \alpha_j \right\}
    > \entropy(\prior).
  \end{equation*}
  The loss~\eqref{eqn:loss-def} is thus classification calibrated
  (Def.~\ref{definition:classification-calibrated}).
\end{proof-of-proposition}

\subsection{Proof of Proposition~\ref{proposition:zo-uncertainty-calibrated}}

Without loss of generality, we assume that $\prior_k < \max_j \prior_j$, so
that restricting to $\alpha_k \ge \max_j \alpha_j$ forces $\alpha$ to have
larger zero-one risk than $1 - \max_j \prior_j$.  We present two lemmas
based on convex duality that imply the result.  In each,
we let $v_{\setminus i} = [v_1 ~ \cdots ~ v_{i-1} ~ v_{i+1} ~ \cdots
  v_k]^T \in \R^{k-1}$ be the vector $v \in \R^k$ without its $i$th element.
\begin{lemma}
  \label{lemma:weird-linear-inf}
  Let $v \in \R^k$. Then
  \begin{align*}
    \inf_{\alpha_k \ge \max_j \alpha_j}
    v^T \alpha
    = \begin{cases} 0 & \mbox{if~} v_k = -\sum_{i=1}^{k-1} v_i,
      v_{\setminus k} \preceq 0 \\
      -\infty & \mbox{otherwise.}
    \end{cases}
  \end{align*}
\end{lemma}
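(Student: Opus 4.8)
The plan is to recognize the infimum as a linear program over the polyhedral cone $K = \{\alpha \in \R^k : \alpha_k \ge \alpha_j \text{ for all } j \in [k-1]\}$ and to reparametrize $K$ so that the analysis becomes transparent. First I would substitute $\beta_j = \alpha_k - \alpha_j \ge 0$ for $j = 1, \ldots, k-1$, keeping $\alpha_k \in \R$ free; the map $(\alpha_k, \beta) \mapsto \alpha_k \ones - \sum_{j=1}^{k-1} \beta_j e_j$ is a bijection from $\R \times \R^{k-1}_+$ onto $K$, with inverse $\alpha \mapsto \big(\alpha_k, (\alpha_k - \alpha_j)_{j=1}^{k-1}\big)$. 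Substituting and expanding,
\[
v^T \alpha = \alpha_k \Big(\textstyle\sum_{i=1}^k v_i\Big) - \sum_{j=1}^{k-1} \beta_j v_j,
\]
so the infimum over $\alpha \in K$ equals the infimum of the right-hand side over $\alpha_k \in \R$ and $\beta \in \R^{k-1}_+$.

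Next I would analyze this decoupled problem in three cases. The term in $\alpha_k$ is linear and unconstrained, so the objective is bounded below only if $\sum_{i=1}^k v_i = 0$, i.e.\ $v_k = -\sum_{i=1}^{k-1} v_i$; otherwise the infimum is $-\infty$. Assuming this holds, the objective reduces to $-\sum_{j=1}^{k-1} \beta_j v_j$ over $\beta \in \R^{k-1}_+$, which is bounded below iff $v_j \le 0$ for every $j \in [k-1]$, i.e.\ $v_{\setminus k} \preceq 0$; if some $v_j > 0$, sending $\beta_j \to \infty$ drives the value to $-\infty$. Finally, when both conditions hold the objective is nonnegative on $\R^{k-1}_+$ and equals $0$ at $\beta = \zeros$, $\alpha_k = 0$ (equivalently $\alpha = \zeros$), so the infimum is exactly $0$. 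Collecting the three cases yields the claimed formula.

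There is essentially no hard step here; the only points that require care are verifying that the reparametrization really is a bijection onto $K$ (immediate from the explicit inverse) and checking that the degenerate cases ($k = 1$, or $v = \zeros$) fall out of the same case analysis, which they do. One could alternatively obtain the condition via LP duality or a Farkas-type argument, but the direct substitution is the cleanest route and I would present it that way.
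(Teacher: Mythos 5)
Your proof is correct. It takes a genuinely different (though closely related) route from the paper: the paper introduces Lagrange multipliers $\beta \in \R^{k-1}_+$ for the constraints $\alpha_k \ge \alpha_j$, computes the dual function $\inf_\alpha \mc{L}(\alpha,\beta)$, and reads off the answer from strong LP duality, whereas you parametrize the feasible cone directly by $(\alpha_k, \beta)$ with $\beta_j = \alpha_k - \alpha_j \ge 0$ and minimize the decoupled objective $\alpha_k \sum_i v_i - \sum_{j<k} v_j \beta_j$ by inspection. Your primal substitution is entirely self-contained and elementary --- in particular it does not need to invoke duality to certify the $-\infty$ cases, since you exhibit explicit unbounded directions --- while the paper's version is shorter on the page but leans on LP duality machinery already in use elsewhere in that section. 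Both arrive at the same characterization, and the algebra in your decomposition checks out.
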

\begin{proof}
  By introducing Lagrange multipliers $\beta \in \R^{k-1}_+$ for the
  constraints $\alpha_k \ge \alpha_j$ for $j \neq k$, we have Lagrangian
  \begin{equation*}
    \mc{L}(\alpha, \beta)
    = \left(v + \left[\begin{matrix} \beta \\ -\beta^T \ones
      \end{matrix}\right]\right)^T \alpha,
    ~~~ \mbox{so} ~~~
    \inf_\alpha \mc{L}(\alpha, \beta)
    = \begin{cases}
      0 & \mbox{if}~ v_k = \beta^T \ones, v_{\setminus k} = -\beta \\
      -\infty & \mbox{otherwise.}
    \end{cases}
  \end{equation*}
  Substuting $\beta = -v_{\setminus k}$ and noting that
  $\beta \succeq 0$ gives the result.
\end{proof}

Thus, if we define the matrix $C = \ones \ones^T - I_{k \times k}$ with
columns $c_l$, we find by strong duality that
\begin{align*}
  \lefteqn{\inf_{\alpha_k \ge \max_j \alpha_j}
    \left\{-\prior^T \alpha
    + (-\entropy_{\zoloss})^*(\alpha) \right\}} \\
  & =
  \inf_{\alpha_k \ge \max_j \alpha_j}
  \left\{-\prior^T \alpha
  + \sup_{p \in \simplex_k} \inf_{q \in \simplex_k}
  \left\{p^T\alpha + q^T C^T p\right\}\right\} \\
  & = \sup_{p \in \Delta_k}
  \inf_{\alpha_k \ge \max_j \alpha_j}
  \left\{\min_l c_l^T p
  + (p - \prior)^T \alpha \right\} \\
  & = \sup_{p \in \simplex_k}
  \left\{1 - \max_l p_l
  \mid 
  \prior_{\setminus k} \succeq p_{\setminus k},
  p_k = \prior_k + \ones^T(\prior_{\setminus k} - p_{\setminus k})
  \right\},
\end{align*}
by Lemma~\ref{lemma:weird-linear-inf}.
The next lemma then immediately implies
Proposition~\ref{proposition:zo-uncertainty-calibrated}
once we note that $\entropy_{\zoloss}(\prior) = 1 - \max_j \prior_j$.
\begin{lemma}
  Let $G_k(\prior) = \inf_{\alpha_k \ge \max_j \alpha_j}
  \{-\prior^T \alpha + (-\entropy_{\zoloss})^*(\alpha)\}$.
  Then
  \begin{equation*}
    G_k(\prior) - (1 - \max_j \prior_j)
    \ge \frac{1}{k} (\max_j \prior_j - \prior_k).
  \end{equation*}
\end{lemma}
\begin{proof}
  \newcommand{\tlow}{t_{\rm low}}
  \newcommand{\thigh}{t_{\rm high}}
  We essentially construct the optimal $p \in \simplex_k$ vector
  for the supremum in the definition of $G_k$. Without
  loss of generality, we assume that $\prior_1 = \max_j \prior_j > \prior_k$,
  as the result is trivial if $\prior_k = \max_j \prior_k$.
  For $t \in \R$, define
  \begin{equation*}
    L(t) = \prior_k + \sum_{j=1}^k \hinge{\prior_j - t}
    ~~ \mbox{and} ~~
    R(t) = t.
  \end{equation*}
  By definining $\tlow = \prior_k$ and
  $\thigh = \prior_1$ we have
  \begin{align*}
    L(\tlow) & = \prior_k + \sum_{j=1}^k \hinge{\prior_j - \prior_k}
    \ge \prior_k + \hinge{\prior_1 - \prior_k} = \prior_1 > \prior_k
    = R(\tlow), \\
    L(\thigh) & = \prior_k < \prior_1 = R(\thigh),
  \end{align*}
  and the fact that $L$ is strictly decreasing in
  $[\tlow, \thigh]$ and $R$ is strictly increasing implies
  that there exists a unique root $t^\star \in (\tlow, \thigh)$ such that
  $L(t^\star) = t^\star = R(t^\star)$.
  Now, we define the vector $p$ by
  \begin{equation*}
    p_j = \min\{t^\star, \prior_j\} ~~ \mbox{for~} j \le k-1,
    ~~
    p_k = t^\star.
  \end{equation*}
  Then we have $\ones^T p = t^\star + \sum_{j=1}^{k-1} t^\star \wedge \prior_j
  = \prior_k + \sum_{j=1}^{k-1} (\hinge{\prior_j - t^\star} + t^\star
  \wedge \prior_j) = 1$, and moreover, we have
  $G_k(\prior) \ge 1 - \max_l p_l = 1 - t^\star$.

  It remains to show that $1 - t^\star - (1 - \prior_1) \ge
  \frac{1}{k} (\prior_1 - \prior_k)$; equivalently, we must show that
  $t^\star \le (1 - \frac{1}{k}) \prior_1
  + \frac{1}{k} \prior_k$. Suppose for the sake of
  contradiction that this does not hold, that is,
  that $t^\star > (1 - \frac{1}{k}) \prior_1 + \frac{1}{k} \prior_k$.
  We know that $\tlow = \prior_k < t^\star < \prior_1 = \thigh$ by assumption.
  With $t^\star$ satisfying these two constraints, we have that
  \begin{align*}
    L(t^\star)
    = \prior_k + \sum_{j=1}^k \hinge{\prior_j - t^\star}
    & \le \prior_k + \sum_{j=1}^{k-1} \hinge{\prior_1 - t^\star}
    = \prior_k + (k - 1) (\prior_1 - t^\star) \\
    & < \prior_k + (k-1)\left(\prior_1 - \frac{k-1}{k}
    \prior_1 - \frac{1}{k} \prior_k\right) \\
    & = \frac{k-1}{k} \prior_1 + \frac{1}{k} \prior_k
    < t^\star,
  \end{align*}
  the two strict inequalities by assumption on $t^\star$. But then
  we would have $L(t^\star) < R(t^\star) = t^\star$, a contradiction
  to our choice of $t^\star$. Because $L$ is decreasing, it is thus
  the case that $t^\star \le (1 - \frac{1}{k}) \prior_1 + \frac{1}{k}
  \prior_k$, giving the result.
\end{proof}

\subsection{Proof of Observation~\ref{observation:weighted-calibration}}
\label{sec:proof-weighted-calibration}

Our proof is essentially a trivial modification of Zhang~\cite[Theorem
  8]{Zhang04a}.  We assume without loss of generality that $\phi(\cdot) \ge
0$. Let $\prior \in \simplex_k$, and recalling that the cost matrix $C =
[c_1 ~ \cdots ~ c_k]$, let
\begin{equation*}
  L(\prior, \alpha) = \sum_{y = 1}^k \prior_y \sum_{i = 1}^k c_{yi}
  \phi(-\alpha_i)
  = \sum_{i = 1}^k \prior^T c_i \, \phi(-\alpha_i),
\end{equation*}
noting that $\prior^T c_i \ge 0$ for each $i$. Without loss of generality,
we may assume that $\prior^T c_1 > \prior^T c_2 = \min_l \prior^T c_l$.  If
we can show that $\inf_\alpha L(\prior, \alpha) < \inf_{\alpha_1 \ge \max_j
  \alpha_j} L(\prior, \alpha)$, then the proof will be complete. Let
$\alpha^{(m)} \in \R^k$ be any sequence satisfying $\ones^T \alpha^{(m)} =
0$ and $\alpha_1^{(m)} \ge \max_j \alpha_j^{(m)}$ such that $L(\prior,
\alpha^{(m)}) \to \inf_{\alpha_1 \ge \max_j \alpha_j} L(\prior, \alpha)$.

We first show that it is no loss of generality to assume that $\alpha^{(m)}$
converges. Suppose for the sake of contradiction that $\limsup_m
\norm{\alpha^{(m)}} = \infty$. Then as $\ones^T \alpha^{(m)} = 0$, we must
have $\limsup_m \alpha_1^{(m)} = \infty$, so that
$\limsup_m \phi(-\alpha_1^{(m)}) = \infty$ because $\phi'(0) < 0$ and $\phi$
is convex. As it must be the case that $\prior^T c_1 \phi(-\alpha_1^{(m)})$
remains bounded (for the convergence of $L(\prior, \alpha^{(m)})$), we would
then have that $\prior^T c_1 = 0$, which is a contradiction because
$\prior^T c_1 > \min_l \prior^T c_l \ge 0$. Thus we must have
$\limsup_m \norm{\alpha^{(m)}} < \infty$, and so there is a subsequence
of $\alpha^{(m)}$ converging; without loss of generality, we assume
that $\alpha^{(m)} \to \alpha\opt$. Then $L(\prior, \alpha\opt) = 
\inf_{\alpha_1 \geq \max{\alpha_j}} L(\prior, \alpha)$ by continuity of $\phi$. 

We show that by swapping the value of $\alpha_1\opt$ with the value
$\alpha_2\opt$ (or increasing the latter slightly), we can always improve
the value $L(\prior, \alpha\opt)$. We consider three cases, noting
in each that $\alpha_1\opt \ge 0$ as $\ones^T \alpha\opt = 0$.
\begin{enumerate}[1.]
\item Let $\alpha_1\opt = \alpha_2\opt \ge 0$. Then $\phi'(-\alpha_1\opt) =
  \phi'(-\alpha_2\opt) \le \phi'(0) < 0$, and additionally $c_1^T \prior
  \phi'(-\alpha_1\opt) - c_2^T \prior \phi'(-\alpha_2\opt) < 0$ because
  $c_2^T \prior < c_1^T \prior$. For sufficently small $\delta > 0$, we
  thus have
  \begin{equation*}
    c_1^T \prior \phi(-\alpha_1\opt + \delta)
    + c_2^T \prior \phi(-\alpha_2\opt - \delta)
    < c_1^T \prior \phi(-\alpha_1\opt) + c_2^T \prior \phi(-\alpha_2\opt),
  \end{equation*}
  whence $L(\prior, \alpha\opt) > L(\prior, \alpha\opt - \delta(e_1 -
  e_2))$.
\item Let $\alpha_1\opt > \alpha_2\opt$ and
  $\phi(-\alpha_1 \opt) > \phi(-\alpha_2\opt)$. Then
  taking $\alpha \in \R^k$ such that $\alpha_1 = \alpha_2\opt$,
  $\alpha_2 = \alpha_1\opt$, and $\alpha_i = \alpha_i\opt$ for $i \ge 3$,
  it is clear that
  $L(\prior, \alpha) < L(\prior, \alpha\opt)$.
\item Let $\alpha_1\opt > \alpha_2\opt$ and $\phi(-\alpha_1\opt) \le
  \phi(-\alpha_2\opt)$. Using that $-\alpha_1\opt < 0$, we see that for
  sufficiently small $\delta > 0$ we have $\phi(-\alpha_1\opt + \delta) <
  \phi(-\alpha_1\opt)$ and $\phi(-\alpha_2\opt) \ge \phi(-\alpha_2\opt -
  \delta)$, because $\phi$ must be non-decreasing at the point
  $-\alpha_2\opt$ as $\phi'(0) < 0$. Thus we have
  $L(\prior, \alpha\opt - \delta(e_1 - e_2)) < L(\prior, \alpha\opt)$.
\end{enumerate}

\subsection{Biconjugates of the zero-one loss}
\label{sec:proof-conjugate-zo-loss}

In this subsection, we calculate the conjugate of the generalized entropy $\entropy(\prior)
= 1 - \max_j \prior_j$ (pointwise Bayes risk) associated with the zero-one
loss, demonstrating equality~\eqref{eqn:conjugate-zo-loss}.  Let
$(-\entropy)^*(\alpha) = \sup_{\prior \in \simplex_k} \{\prior^T\alpha + 1-
\linf{\prior}\}$.  Formulating the Lagrangian for the supremum with dual
variables $\theta \in \R$, $\lambda \in \R^k_+$, we have
\begin{equation*}
  \mc{L}(\prior, \theta, \lambda)
  = \prior^T\alpha - \linf{\prior} - \theta(\ones^T \prior - 1)
  + \lambda^T \prior,
\end{equation*}
which has dual objective
\begin{equation*}
  \sup_\prior \mc{L}(\prior, \theta, \lambda)
  = \begin{cases}
    \theta & \mbox{if}~ \lone{\alpha + \lambda - \theta \ones} \le 1 \\
    -\infty & \mbox{otherwise}.
  \end{cases}
\end{equation*}
As $\inf_{\lambda \ge 0} \lone{\alpha + \lambda - \theta \ones} =
\sum_{j=1}^k \hinge{\alpha_j - \theta}$ and strong duality obtains (the
problems are linear), we have that the supremum in
expression~\eqref{eqn:conjugate-zo-loss} is
\begin{equation*}
  \sup_\prior \left\{\prior^T \alpha - \linf{\prior}
  \mid \prior^T \ones = 1, \prior \ge 0 \right\}
  = \inf \bigg\{\theta \mid \sum_{j=1}^k \hinge{\alpha_j - \theta} \le 1
  \bigg\}.
\end{equation*}
Without loss of generality we may assume that $\alpha_1 \ge \alpha_2
\ge \cdots$ by symmetry, so that over the domain
$\theta \in \openleft{-\infty}{\alpha_1}$, the function
$\theta \mapsto \sum_{j=1}^k \hinge{\alpha_j - \theta}$ is strictly
decreasing. Thus, there is a unique smallest $\theta$ satisfying
$\sum_{j=1}^k \hinge{\alpha_j - \theta} \le 1$ (attaining the equality),
and by inspection, this must be one of
\begin{equation*}
  \theta \in \left\{\alpha_1 - 1,
  \frac{\alpha_1 + \alpha_2}{2} - \half,
  \frac{\alpha_1 + \alpha_2 + \alpha_3}{3} - \frac{1}{3},
  \ldots, \frac{\ones^T\alpha}{k} - \frac{1}{k}
  \right\}.
\end{equation*}
(Any $\theta$ makes some number of the terms $\hinge{\alpha_j - \theta}$
positive; fixing the number of terms and solving for $\theta$ gives the
preceding equality.) Expression~\eqref{eqn:conjugate-zo-loss} follows.

\subsection{Pointwise infimal risks of hinge losses}
\label{sec:proof-conjugate-hinge-loss}

We demonstrate equality~\eqref{eqn:conjugate-hinge-loss} with
$C = [c_1 ~ \cdots ~ c_k]$. We note that
\begin{equation*}
  \entropy(\prior) = \inf_{\alpha^{T}\ones = 0}
  \left\{\sum_{y=1}^{k} \prior_y \sum_{i = 1}^k c_{yi}
  \hinge{1+\alpha_i} \right\}
  = \inf_{\alpha^{T}\ones = 0}
  \left\{\sum_{i=1}^{k} \prior^T c_i
  \hinge{1+\alpha_i} \right\},
\end{equation*}
and formulating the Lagrangian by introducing
dual variable $\theta \in \R$, we have
\begin{equation*}
  \mc{L}(\alpha,\theta) = \sum_{i=1}^{k} \prior^T c_i
  \hinge{1+\alpha_{i}}
  -\theta \ones^T\alpha.
\end{equation*}
The generalized KKT conditions~\cite{Bertsekas99} for this problem 
are given by taking subgradients of the Lagrangian. 
At optimum, we must have
\begin{equation*}
  \nu_i \in \partial \hinge{1 + \alpha_i}
  = \begin{cases}
    0 & \mbox{if}~ \alpha_i < -1 \\
    [0, 1] & \mbox{if}~ \alpha_i = -1 \\
    1 & \mbox{if~} \alpha_i > -1,
  \end{cases}
  ~~ \mbox{and} ~~
  \prior^T c_i \, \nu_i - \theta = 0, ~
  \ones^T\alpha = 0,
  ~ i \in [k].
\end{equation*}
Without loss of generality, assume that
$\prior^T c_1 = \min_j \prior^T c_j$. Set $\alpha\opt \in \R^k$
and $\theta\opt$ via
\begin{align*}
  \alpha\opt_1 = (k - 1),
  ~~ \alpha\opt_i = -1
  ~ \mbox{for~} i \ge 2,
  ~~~ \theta\opt = \min_j \prior^T c_j = \prior^T c_1.
\end{align*}
We have $\ones^T \alpha\opt = (k - 1) - (k - 1) = 0$, and setting $\nu_i =
1$ for $i$ such that $\prior^T c_i = \min_j \prior^T c_j$ and $\nu_i =
\frac{\min_j \prior^T c_j}{ \prior^T c_i} \in [0, 1]$ otherwise, we see
that $\prior^T c_i \nu_i - \theta\opt = \min_j \prior^T c_j - \min_j
\prior^T c_j = 0$ for all $i$; the KKT conditions are satisfied.  Thus
$\alpha\opt$ and $\theta\opt$ are primal-dual optimal, yielding
expression~\eqref{eqn:conjugate-hinge-loss}.

\subsection{Classification calibration of hinge losses}
\label{sec:hinge-loss-calibration}

We provide a quantitative
guarantee on the classification calibration of hinge-like losses.
\begin{lemma}
  \label{lemma:hinge-great-gap}
  Let $C \in \R^{k \times k}_+$ and let $\wzoloss(\alpha, y) = \max_i
  \{c_{yi} \mid \alpha_i = \max_j \alpha_j\}$ be the cost-weighted
  loss~\eqref{eqn:weighted-zero-one}. Let
  $\loss(\alpha, y) = \sum_{i = 1}^k c_{yi} \hinge{1 + \alpha_i}
  + \bindic{\ones^T \alpha = 0}$ or
  $\loss(\alpha, y) = \sum_{i = 1}^k c_{yi} \hinge{1 + \alpha_i - \alpha_y}$.
  Then for any $\prior \in \simplex_k$ and
  $\alpha \in \R^k$,
  \begin{equation*}
    \sum_{i = 1}^k \prior_i
    \loss(\alpha, i)
    - \inf_{\alpha'}
    \sum_{i = 1}^k \prior_i \loss(\alpha',i)
    \ge \sum_{i = 1}^k \prior_i
    \wzoloss(\alpha, i)
    - \inf_{\alpha'}
    \sum_{i = 1}^k \prior_i \wzoloss(\alpha',i).
  \end{equation*}
\end{lemma}
\begin{proof}
  First, recall from Example~\ref{example:hinge-loss}
  that $\inf_\alpha \sum_{i = 1}^k \prior_i \loss(\alpha, i) = k \min_l
  \prior^T c_l$.
  Defining $y(\alpha) = \argmax_j \alpha_j$ (breaking ties in some
  arbitrary deterministic order), it is sufficient to
  argue that
  for $\loss(\alpha, y) = \sum_{i=1}^k c_{yi} \hinge{1 +
    \alpha_i}$, we have
  \begin{equation}
    \sum_y \prior_y \loss(\alpha, y)
    - k \min_l \prior^T c_l
    \ge \left[c_{y(\alpha)}^T \prior - \min_l \prior^T c_l
      \right]
    \label{eqn:cw-loss-pointwise-gap}
  \end{equation}
  for any vector $\prior \in \simplex_k$
  and $\alpha$ with $\ones^T \alpha = 0$.

  To show inequality~\eqref{eqn:cw-loss-pointwise-gap}, assume without loss
  of generality that there exists an index $l\opt < k$ such that $c_1^T
  \prior = c_2^T \prior = \cdots = c_{l\opt}^T \prior = \min_l c_l^T
  \prior$, while $c_l^T \prior > c_1^T \prior$ for $l > l\opt$. (If $l\opt =
  k$, then inequality~\eqref{eqn:cw-loss-pointwise-gap} is trivial.) We
  always have $\sum_y \prior_y \loss(\alpha, y) \ge k\min_l c_l^T \prior$;
  let us suppose that $\alpha_l \ge \max_j \alpha_j$ for some $l > l\opt$;
  without loss of generality take $l = k$. Then we have
  \begin{align}
    \nonumber
    \sum_{y=1}^k \prior_y \loss(\alpha, y)
    & \ge \inf_{\alpha_k \ge \max_j \alpha_j,
      \ones^T \alpha = 0}
    \sum_{y=1}^k \prior_y \sum_{j=1}^k c_{yj} \hinge{1 + \alpha_j} \\
    & = \inf_{\alpha_k \ge \max_j \alpha_j, \ones^T \alpha = 0}
    \sum_{l=1}^k c_l^T \prior \hinge{1 + \alpha_l}.
    \label{eqn:weighted-problem-to-solve}
  \end{align}
  Writing the Lagrangian for this problem and introducing variables
  $\lambda \ge 0$ for the inequality $\alpha_k \ge \max_j \alpha_j$ and
  $\theta \in \R$ for the equality $\ones^T \alpha = 0$, we have
  \begin{equation*}
    \mc{L}(\alpha, \lambda, \theta)
    = \sum_{l=1}^k \prior^T c_l\hinge{1 + \alpha_l}
    + \theta \ones^T \alpha + \lambda \left(\max_j \alpha_j - \alpha_k
    \right).
  \end{equation*}
  Set $\alpha\opt_1 = \cdots = \alpha\opt_{l\opt} = \frac{k - l\opt -
    1}{l\opt + 1}$, $\alpha_k = \frac{k - l\opt - 1}{l\opt + 1}$, and
  $\alpha_l = -1$ for $l > l\opt$, $l \neq k$. We claim that these are
  optimal. Indeed, set $\theta\opt = -(1 - \epsilon) \prior^T c_1 - \epsilon
  \prior^T c_k$ for some $\epsilon \in (0, 1)$, whose value we specify
  later. Taking subgradients of the Lagrangian with respect to $\alpha$, we
  have
  \begin{equation*}
    \partial_\alpha \mc{L}(\alpha\opt, \lambda, \theta\opt)
    = \diag(\nu)
    \left[\begin{matrix} c_1^T \\ \vdots \\ c_k^T \end{matrix}\right]
    \prior
    + \theta\opt \ones + \lambda \left[\conv\{e_1, \ldots, e_{l\opt},
      e_k\} - e_k \right],
  \end{equation*}
  where $\nu_i \in [0, 1]$ are arbitrary scalars satisfying $\nu_i \in
  \partial \hinge{1 + \alpha_i\opt}$, i.e.\ $\nu_1 = \cdots = \nu_{l\opt} =
  \nu_k = 1$, $\nu_l \in [0, 1]$ for $l \in \{l\opt + 1, \ldots, k-1\}$.
  Notably, we have $\prior^T c_l + \theta\opt = \epsilon \prior^T (c_1 -
  c_k) \le 0$ for $l \in \{1, \ldots, l\opt\}$ and $\prior^T c_k +
  \theta\opt = (1 - \epsilon) \prior^T(c_k - c_1) \ge 0$.  Choosing
  $\epsilon$ small enough that $\epsilon \prior^T c_k + (1 - \epsilon)
  \prior^T c_1 < \prior^T c_l$ for $l \in \{l \opt + 1, \ldots, k-1\}$, it
  is clear that we may take $\nu_l = \frac{(1 - \epsilon) \prior^T c_1 +
    \epsilon \prior^T c_k}{\prior^T c_l} \in [0, 1]$ for each $l \not \in
  \{1, \ldots, l\opt, k\}$, and we show how to choose $\lambda\opt \ge 0$ so
  that $\zeros \in \partial_\alpha \mc{L}(\alpha\opt, \lambda,
  \theta\opt)$. Assume without loss of generality (by scaling of $\lambda
  \ge 0$) that $\prior^T(c_k - c_1) = 1$. Eliminating extraneous indices in
  $\partial_\alpha \mc{L}$, we see that we seek a setting of $\lambda$ and a
  vector $v \in \simplex_{l\opt +1}$ such that
  \begin{equation*}
    e_{l\opt + 1} - \epsilon \ones_{l\opt + 1}
    + \lambda (v - e_{l\opt + 1}) = 0.
  \end{equation*}
  This is straightforward: take $v = \frac{1}{l\opt \epsilon + (1 -
    \epsilon)}[\epsilon ~ \cdots ~ \epsilon ~ (1 - \epsilon)]^T \in
  \simplex_{l\opt + 1}$, and set $\lambda\opt = l\opt \epsilon + (1 -
  \epsilon) \ge 0$.

  Summarizing, we find that our preceding choices of $\alpha\opt$ were
  optimal for the problem~\eqref{eqn:weighted-problem-to-solve}, that is,
  $\alpha_1\opt = \cdots = \alpha_{l\opt} = \frac{k - l\opt - 1}{l\opt + 1}
  = \alpha_k$, with $\alpha_l = -1$ for $l\opt < l < k$. We thus have
  \begin{align*}
    \sum_{y = 1}^k \prior_y \loss(\alpha, y)
    & \ge \sum_{l = 1}^{l\opt} \prior^T c_l
    \left(1 + \frac{k - l\opt - 1}{l\opt + 1} \right)
    + \prior^T c_k \left(1 + \frac{k - l\opt - 1}{l\opt + 1}\right) \\
    & = \min_l \prior^T c_l
    \left(l\opt + \frac{l\opt}{l\opt + 1}(k - l\opt - 1)\right)
    + \frac{k}{l\opt + 1} c_k^T \prior.
  \end{align*}
  In particular, we have
  \begin{align*}
    \lefteqn{\sum_{y=1}^k \prior_y \loss(\alpha, y) - k \min_l \prior^T c_l}
    \\
    & \ge \min_l \prior^T c_l
    \left(l\opt + \frac{l\opt}{l\opt + 1}(k - l\opt - 1)\right)
    + \frac{k}{l\opt + 1} c_k^T \prior
    - k \min_l c_l^T \prior \\
    & = \frac{k}{l\opt + 1} \left[c_k^T \prior - \min_l c_l^T \prior
      \right].
  \end{align*}
  Recalling that we must have had $l\opt < k$, we obtain
  inequality~\eqref{eqn:cw-loss-pointwise-gap}.
\end{proof}

\subsection{Proofs of technical lemmas on smoothness}

\subsubsection{Proof of Lemma~\ref{lemma:equivalent-uniform-convexity}}
\label{sec:proof-equivalent-uniform-convexity}

Let $t \in (0, 1)$ and $u^t = t u_1 + (1 - t) u_0$. Assume that
inequality~\eqref{eqn:subgrad-uniform-convexity} holds and let $u_0 \in
\relint \Omega$.  Then
\begin{align*}
  f(u_1) & 
  \ge f(u^t) + s_t^T(u_1 - u^t) + \frac{\lambda}{2} \norm{u^t - u_1}^\kappa \\
  & = f(u^t) + (1 - t) s_t^T(u_1 - u_0) + \frac{\lambda}{2}
  (1 - t)^\kappa \norm{u_0 - u_1}^\kappa,
\end{align*}
where $s_t \in \partial f(u^t)$, which must exist as $u^t \in \relint \Omega$
(see Lemma~\ref{lemma:interior-line-segments} and~\cite[Chapter
  VI]{HiriartUrrutyLe93ab}). Similarly, we have
\begin{equation*}
  f(u_0) \ge f(u^t) + t s_t^T(u_0 - u_1)
  + \frac{\lambda}{2} t^\kappa \norm{u_0 - u_1}^\kappa.
\end{equation*}
Multiplying the preceding inequalities by $t$ and
$(1 - t)$, respectively, gives
\begin{align*}
  t f(u_1) + (1 - t) f(u_0)
  & \ge f(u^t)
  + \frac{\lambda}{2} \norm{u_0 - u_1}^\kappa \left[t(1 - t)^\kappa
    + (1 - t) t^\kappa \right] \\
  & = f(u^t) + \frac{\lambda}{2} t(1 - t) \norm{u_0 - u_1}^\kappa
  \left[(1 - t)^{\kappa - 1} + t^{\kappa - 1}\right]
\end{align*}
for any $u_0 \in \relint \Omega$.

We now consider the case that $u_0 \in \Omega
\setminus \relint \Omega$, as the preceding display is equivalent to the
uniform convexity condition~\eqref{eqn:uniform-convexity}.
Let $u_0 \in \Omega \setminus \relint \Omega$ and $u_0' \in \relint \Omega$. 
For $a \in [0, 1]$ define
the functions $h(a) = t f(u_1) + (1 - t) f((1 - a) u_0 + a u_0')$ and
\begin{align*}
  h_t(a) & =
  f(t u_1 + (1 - t)((1 - a) u_0 + a u_0')) \\
  & \quad ~ 
  + \frac{\lambda}{2} t(1 - t) \norm{(1 - a) u_0 + au_0' - u_1}^\kappa
  \left[(1 - t)^{\kappa - 1} + t^{\kappa - 1}\right].
\end{align*}
Then $h$ and $h_t$
are closed one-dimensional convex functions, which are thus
continuous~\cite[Chapter I]{HiriartUrrutyLe93ab}, and we have $h(a) \ge
h_t(a)$ for all $a \in (0, 1)$ as $(1-a)u_0 + au_0' \in \relint \Omega$.
Thus
\begin{align*}
  \lefteqn{t f(u_0) + (1 - t) f(u_1) = h(0)
    = \lim_{a \to 0} h(a)
    \ge \lim_{a \to 0} h_t(a)
    = h_t(0)} \\
  & \qquad = f(tu_1 + (1 - t) u_0)
  + \frac{\lambda}{2} t(1 - t) \norm{u_0 - u_1}^\kappa
  \left[(1 - t)^{\kappa-1} + t^{\kappa - 1}\right].
\end{align*}
This is equivalent to the uniform convexity
condition~\eqref{eqn:uniform-convexity}.

We now prove the converse. Assume the
uniform convexity condition~\eqref{eqn:uniform-convexity},
which is equivalent to
\begin{equation*}
  \frac{f(u^t) - f(u_0)}{t} + \frac{\lambda}{2} (1 - t)
  \norm{u_0 - u_1}^\kappa [(1 - t)^{\kappa - 1} + t^{\kappa - 1}]
  \le f(u_1) - f(u_0)
\end{equation*}
for all $u_0, u_1 \in \Omega$ and $t \in (0, 1)$. Let $f'(x, d) = \lim_{t
  \downarrow 0} \frac{f(x + t d) - f(x)}{t}$ be the directional derivative
of $f$ in direction $d$, recalling that if $\partial f(x) \neq \varnothing$
then $f'(x, d) = \sup_{g \in \partial f(x)} \<g, d\>$
(see~\cite[Ch.~VI.1]{HiriartUrrutyLe93ab}). Then taking $t \downarrow 0$, we
have
\begin{equation*}
  f'(u_0, u_1 - u_0) + \frac{\lambda}{2} \norm{u_0 - u_1}^\kappa
  \le f(u_1) - f(u_0).
\end{equation*}
Because
$f'(u_0, u_1 - u_0) = \sup_{g \in \partial f(u_0)} \<g, u_1 - u_0\>$,
this implies the subgradient condition~\eqref{eqn:subgrad-uniform-convexity}.

\subsubsection{Proof of Lemma~\ref{lemma:uniform-to-holder}}
\label{sec:proof-uniform-to-holder}

First, we note that as $\dom f = \Omega$ and $f$ is uniformly convex, it is
$1$-coercive, meaning that $\lim_{\norm{u} \to \infty} f(u) / \norm{u} =
\infty$.  Thus $\dom f^* = \R^k$; see~\cite[Proposition
  X.1.3.8]{HiriartUrrutyLe93ab}.

As $f$ is strictly convex by assumption, we have that $f^*$ is
differentiable~\cite[Theorem X.4.1.1]{HiriartUrrutyLe93ab}.  Moreover, as $f$
is closed convex, $f = f^{**}$, and we have for any $s \in \R^k$ that $u_0 =
\nabla f^*(s)$ if and only if $s \in \partial f(u_0)$, meaning that $f$ is
subdifferentiable on the set $\image \nabla f^* = \{\nabla f^*(s) : s \in
\R^k\}$, whence $\Omega \supset \image \nabla f^*$.  Now, let $s_0, s_1 \in \R^k$
and $u_0 = \nabla f^*(s_0)$ and $u_1 = \nabla f^*(s_1)$. We must then have $s_0
\in \partial f(u_0)$ and $s_1 \in \partial f(u_1)$ by standard results in convex
analysis~\cite[Corollary X.1.4.4]{HiriartUrrutyLe93ab}, so that $\partial
f(u_0) \neq \varnothing$ and $\partial f(u_1) \neq \varnothing$.

Now we use the uniform convexity
condition~\eqref{eqn:subgrad-uniform-convexity} of
Lemma~\ref{lemma:equivalent-uniform-convexity} to see
\begin{align*}
  f(u_1) - f(u_0)
  & \ge \<s_0, u_1 - u_0\> + \frac{\lambda}{2} \norm{u_0 - u_1}^\kappa
  ~~ \mbox{and} \\
  f(u_0) - f(u_1) &
  \ge \<s_1, u_0 - u_1\> + \frac{\lambda}{2} \norm{u_0 - u_1}^\kappa.
\end{align*}
Adding these equations, we find that
\begin{equation*}
  \<s_0 - s_1, u_0 - u_1\> \ge \lambda \norm{u_0 - u_1}^\kappa,
  ~~~ \mbox{so} ~~~
  \dnorm{s_0 - s_1} \norm{u_0 - u_1} \ge \lambda \norm{u_0 - u_1}^\kappa
\end{equation*}
by H\"older's inequality. Dividing each side by $\norm{u_0 - u_1}$, we
obtain $\norm{u_0 - u_1} \le \lambda^{-\frac{1}{\kappa-1}} \dnorm{s_0 -
  s_1}^\frac{1}{\kappa-1}$, the desired result
once we note that $\nabla f^*(s_i) = u_i$.

\subsubsection{Proof of Lemma~\ref{lemma:holder-above}}
\label{sec:proof-holder-above}

Define $u_t = u_0 + t (u_1 - u_0)$ for $t \in [0, 1]$. Then
using Taylor's theorem, we have
\begin{align*}
  \lefteqn{f(u_1)
    = f(u_0) + \int_0^1 \<\nabla f(t u_1 + (1 - t) u_0), u_1 - u_0\> dt} \\
  & = f(u_0) + \<\nabla f(u_0), u_1 - u_0\>
  + \int_0^1 \<\nabla f(u_t) - \nabla f(u_0), u_1 - u_0\> dt \\
  & \le f(u_0) + \<\nabla f(u_0), u_1 - u_0\>
  + \int_0^1 \dnorm{\nabla f(u_t) - \nabla f(u_0)}
  \norm{u_0 - u_1} dt \\
  & \le f(u_0) + \<\nabla f(u_0), u_1 - u_0\>
  + \int_0^1 L t^\beta \norm{u_0 - u_1}^{\beta + 1} dt.
\end{align*}
Computing the final integral as $\int_0^1 t^\beta dt = \frac{1}{\beta + 1}$
gives the result.

\subsubsection{Proof of Lemma~\ref{lemma:get-zero-gradients}}
\label{sec:proof-get-zero-gradients}

Let $g_n = \nabla f(u_n)$ and suppose for the sake of contradiction that
$g_n \not \to 0$. Then there is a subsequence, which without loss of
generality we take to be the full sequence, such that $\dnorm{g_n} \ge c >
0$ for all $n$. Fix $\delta > 0$, which we will choose later. By
Lemma~\ref{lemma:holder-above}, defining $y_n = u_n - \delta g_n /
\dnorm{g_n}$, we have
\begin{equation*}
  f(y_n) \le f(u_n) - \delta \norm{g_n} + \frac{L}{\beta + 1}
  L \delta^{\beta + 1}
  \le f(u_n) - \delta \left(c - \frac{L}{\beta+1} \delta^\beta\right).
\end{equation*}
In particular, we see that if
\begin{equation*}
  \delta \le \left(\frac{c}{2} \cdot \frac{\beta + 1}{L}\right)^{1/\beta},
  ~~ \mbox{then} ~~
  f(y_n) \le f(u_n) - \frac{\delta c}{2}
\end{equation*}
for all $n$, which contradicts the fact that $f(u_n) \to \inf_x f(x) >
-\infty$.


\section{Order Equivalence of Convex Functions}
\label{sec:order-equivalence-proofs}

In this appendix, we collect the proofs of our various technical results
on order equivalent functions (Definition~\ref{def:order-equivalent}).

\subsection{Proof of Lemma~\ref{lemma:order-univ}}
\label{sec:proof-order-univ}

Before proving the lemma, we give a matrix characterization of
non-negative vectors with equal sums similar to the characterization of
majorization via doubly stochastic matrices (cf.~\cite{MarshallOlAr11}; we
temporarily defer the proof of the lemma to
Appendix~\ref{sec:proof-grid}).
\begin{lemma}
  \label{lemma:grid}
  Vectors $a, b \in \R^m_+$ satisfy $\ones^T a = \ones^Tb$ if and only if
  there exists a matrix $Z \in \R^{m \times m}_+$ 
  such that $Z \ones = a$ and $Z^T \ones = b$.
\end{lemma}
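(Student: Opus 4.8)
The plan is short because the statement is a standard two-marginal (transportation polytope) fact: the ``if'' direction is a one-line trace computation, and the ``only if'' direction admits an explicit construction, so I do not anticipate any genuine obstacle.

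First I would dispatch the ``if'' direction. If $Z \in \R^{m\times m}_+$ satisfies $Z\ones = a$ and $Z^T\ones = b$, then
\[
  \ones^T a = \ones^T(Z\ones) = (Z^T\ones)^T\ones = b^T\ones = \ones^T b,
\]
so equality of the total masses of $a$ and $b$ is forced.

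For the converse, set $s = \ones^T a = \ones^T b$. If $s = 0$, then nonnegativity forces $a = b = \zeros$ and $Z = 0$ works. If $s > 0$, I would simply take the rank-one matrix $Z = \frac{1}{s}\,a b^T$, i.e.\ $Z_{ij} = a_i b_j / s \ge 0$; then $Z\ones = \frac{1}{s} a (b^T\ones) = a$ and, symmetrically, $Z^T\ones = \frac{1}{s} b (a^T\ones) = b$, which is exactly what is required. This is the analogue, in the present setting, of exhibiting one explicit feasible point rather than invoking any Birkhoff/majorization machinery.

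If one preferred a combinatorial argument paralleling the doubly-stochastic characterization of majorization alluded to in the statement, one could instead induct on the number of nonzero coordinates among $a$ and $b$, repeatedly peeling off mass $\min\{a_i, b_j\}$ into the $(i,j)$ entry of $Z$ and decrementing $a_i, b_j$ accordingly; this terminates because each step zeros out at least one coordinate. The rank-one construction makes this unnecessary, however, and the only point demanding even minimal care is the degenerate case $s = 0$.
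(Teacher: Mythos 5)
Your proof is correct, and the converse direction takes a genuinely different route from the paper's. Where you simply write down the rank-one ``product coupling'' $Z = \frac{1}{s} a b^T$ (handling $s=0$ separately), the paper argues by induction on $m$: it sorts $a$ and $b$, peels off the last coordinate by placing $a_m$ and $b_m - a_m$ in the last row and column, and recurses on an $(m-1)\times(m-1)$ sub-problem --- essentially a northwest-corner-rule construction. Both constructions are valid since the lemma demands nothing of $Z$ beyond nonnegativity and the two prescribed marginals, and nothing downstream (the distributions $P_l((i,j)) = z^l_{ij}$ built in Lemma~\ref{lemma:order-univ}) requires more. Your argument is shorter and avoids the sorting and inductive bookkeeping entirely; the paper's construction yields a sparser $Z$, but that sparsity is never exploited. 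Your observation that the ``if'' direction is just $\ones^T Z \ones$ computed two ways matches the paper exactly.
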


Returning to the proof of Lemma~\ref{lemma:order-univ} proper,
let $f_i = f_{\loss^{(i)}, \prior}$ for shorthand. Note that $\dom f_1 =
\dom f_2 = \R^{k-1}_+$ by the construction~\eqref{eqn:f-from-loss},
because $\min_i \inf_{\alpha \in \R^k} \loss(\alpha,i) > -\infty$. Now,
given matrices $A, B \in \R^{(k-1) \times m}_+$ satisfying $A\ones = B
\ones$ (where $A = [a_1 ~ \cdots ~ a_m]$ and $B = [b_1 ~ \cdots ~ b_m]$),
we construct distributions $P_i$ and quantizers $\quant_1$ and $\quant_2$
such that for any $f$ we have
\begin{align*}
  \fdiv{P_1, \ldots, P_{k-1}}{P_k \mid \quant_1}
  & = C \sum_{j=1}^m f(a_j)
  ~~ \mbox{and} \\
  \fdiv{P_1, \ldots, P_{k-1}}{P_k \mid \quant_2}
  & = C \sum_{j=1}^m f(b_j),
\end{align*}
where $C > 0$ is a constant.  We then use
Definition~\ref{def:equivalent-losses} of loss equivalence to show that
$f_1$ and $f_2$ are order equivalent.

With that in mind, take $M$ to be any
positive integer such that $M > \max\{\linf{A \ones}, m\}$.  We enlarge $A$
and $B$ into matrices $A\extend, B\extend \in \R^{(k-1) \times M}_+$
respectively, adding $M - m$ columns. To construct these matrices, let
$a\extend_{i, m+1} = M - \sum_{j=1}^m a_{ij}$ and $b\extend_{i,m+1} = M
- \sum_{j=1}^m b_{ij}$, for $i = 1, \ldots, k-1$, and set $a\extend_{il}
= b\extend_{il} = 0$ for all $m+1 < l \le M$. The enlarged matrices
$A\extend$ and $B\extend$ thus satisfy $\frac{1}{M} A\extend \ones =
\frac{1}{M} B\extend \ones = \ones$, and
their columns belong to $\dom f_1 = \dom f_2 = \R^{k-1}_+$.

Let the spaces $\mc{X} = \{(i, j) \mid 1\leq i\leq M, 1\leq j \leq
M\}$ and $\mc{Z}= \{1, 2, \ldots, M\}$. Define quantizers $\quant_1,
\quant_2 : \mc{X} \to [M]$ by $\quant_1(i, j) = i$ and $\quant_2(i, j) =
j$.  As $A\extend \ones = B\extend \ones = M \ones$, Lemma
\ref{lemma:grid} guarantees the existence of matrices $Z^l = [z_{ij}^l]
\in \R^{M \times M}_+$ such that
\begin{equation*}
  Z^l \ones =
  \frac{1}{M} [a\extend_{lj}]_{j=1}^M
  \in \R^M_+
  ~~~ \mbox{and} ~~~
  (Z^l)^T \ones = \frac{1}{M}[b\extend_{lj}]_{j=1}^M \in \R^M_+,
\end{equation*}
which implies that for all $l \in [k-1]$, the matrix $Z^l$ satisfies
$\sum_{ij} z^l_{ij} = 1$.  For each $l \in [k-1]$, define the probability
distribution $P_l$ on $\mc{X}$ by $P_l((i, j)) = z_{ij}^{l}$ for $i, j \in
[M]$. Let $P_k$ be the distribution defined by $P_k((i, j)) = M^{-2}$.

Under this quantizer design and choice of distributions $P_l$, we have for
any prior $\prior \in \R^k_+$ (upon which the functions $f_1$ and $f_2$
implicitly depend) and $f = f_1$ or $f = f_2$ that
\begin{align*}
  \fdiv{P_1, \ldots, P_{k-1}}{P_k \mid \quant_1}
  & = \frac{1}{M} \sum_{j=1}^M f(a\extend_j)
  ~~ \mbox{and} \\
  \fdiv{P_1, \ldots, P_{k-1}}{P_k \mid \quant_2}
  & = \frac{1}{M} \sum_{j=1}^M f(b\extend_j),
\end{align*}
because $\sum_{i=1}^M P_l((i,j)) = \sum_{i=1}^M z_{ij}^l = a\extend_{lj} /
M$ and $\sum_{i=1}^M P_k((i,j)) = M^{-1}$, and similarly for $B\extend$.
By the loss equivalence of $\loss^{(1)}$ and $\loss^{(2)}$ (recall
Def.~\ref{def:equivalent-losses}), we obtain that
\begin{equation*}
  \sum_{j=1}^M f_1(a\extend_j) \le \sum_{j=1}^M f_1(b\extend_j)
  ~~ \mbox{if and only if} ~~
  \sum_{j=1}^M f_2(a\extend_j) \le \sum_{j=1}^M f_2(b\extend_j).
\end{equation*}
Note that $a\extend_j = b\extend_j \in \dom f_i$ for each $j > m$ and $i =
1, 2$.  Moreover, $a\extend_j=a_j$ and $b\extend_j=b_j$ for $1 \le j \le
m$, so the preceding display is equivalent to
the order equivalence~\eqref{eqn:equivalent-order-equivalence}.

\subsubsection{Proof of Lemma~\ref{lemma:grid}}
\label{sec:proof-grid}

One direction of the proof is easy: if $Z \in \R^{m \times m}_+$ and
$Z \ones = a$ while $Z^T \ones = b$, then $\ones^T Z \ones = \ones^T a
= b^T \ones$.

We prove the converse using induction. When $m = 1$, the result
is immediate.  We claim that it is no loss of generality to assume that $a_1
\ge a_2 \ge \cdots \ge a_m$ and $b_1 \ge \cdots \ge b_m$; indeed, let $P_a$
and $P_b$ be permutation matrices such that $P_a a$ and $P_b b$ are in
sorted (decreasing) order. Then if we construct $\wt{Z} \in \R^{m \times
  m}_+$ such that $\wt{Z} \ones = P_a a$ and $\wt{Z}^T \ones = P_b b$, we
have $Z = P_a^T \wt{Z} P_b$ satisfies $Z \in \R^{m \times m}_+$ and $Z \ones
= P_a^T \wt{Z} \ones = P_a^T P_a a = a$ and $Z^T \ones = P_b^T \wt{Z} \ones
= b$.

Now, suppose that the statement of the lemma is true for all vectors of
dimension up to $m - 1$; we argue that the result holds for $a, b \in
\R_+^m$. It is no loss of generality to assume that $a_m \leq b_m$.  Let
$z_{m,m} = a_m$, $z_{i,m} = 0$ for $2 \le i \le m-1$,
and set $z_{1, m} = b_m - a_m \ge
0$. Additionally, we set $z_{m, i} = 0$ for $1 \le i \le m -1$.
Now, let $Z_{\rm inner} \in \R^{m-1 \times m-1}_+$ be the matrix
defined by the upper $(m-1) \times (m-1)$ sub-matrix of $Z$, and
define the vectors
\begin{equation*}
  a^{\rm inner} = [a_1 + a_m - b_m ~~ a_2 ~~ a_3 ~ \cdots ~~ a_{m-1}]^T
  ~~ \mbox{and} ~~
  b^{\rm inner} = [b_1 ~ b_2 ~ \cdots ~ b_{m-1}]^T.
\end{equation*}
Then as $a_1 \ge (1/m) \ones^Ta = (1/m) \ones^T b \ge b_m$, we have
$a^{\rm inner} \ge 0$ and $b^{\rm inner} \ge 0$, and moreover, $\ones^T
a^{\rm inner} = \ones^T a - b_m = \ones^T b - b_m = \ones^T b^{\rm
  inner}$. In particular, we have by the inductive hypothesis that we may
choose $Z_{\rm inner}$ such that $Z_{\rm inner} \ones = a^{\rm inner}$ and
$Z_{\rm inner}^T \ones = b^{\rm inner}$. By inspection, setting
\begin{equation*}
  Z = \left[\begin{matrix} \left[ \begin{matrix} & & \\
          & Z_{\rm inner} & \\ & & \end{matrix}\right] &
      \begin{array}{c} b_m - a_m \\ 0 \\ \vdots \\ 0 \end{array} \\
      \begin{array}{ccc} 0 & \cdots & 0 \end{array}
      & a_m
    \end{matrix}
    \right],
\end{equation*}
we have $Z \in \R^{m \times m}_+$ and $Z \ones = a$ and $Z^T \ones = b$.

\subsection{Proof of Lemma~\ref{lemma:equivalence-subset}}
\label{sec:proof-lemma-eq-sub}

As in our discussion in Section~\ref{sec:char}, we prove the lemma in a
sequence of steps and auxiliary lemmas.  The roadmap is as follows: first,
we show that we may assume the set $\Omega$ in
Lemma~\ref{lemma:equivalence-subset} has non-empty interior
(\S~\ref{sec:has-interior}). With this done, we can consider any affinely
independent subset of $k + 1$ points from $\Omega$, whence
Lemma~\ref{lemma:equivalence-on-simplex} holds. With this done, we can cover
$\Omega$ with overlapping simplices (\S~\ref{sec:cover-simplices}),
and extend this to all of $\Omega$ (\S~\ref{sec:extend-to-everything}).

\subsubsection{It is no loss of generality to
  assume $\Omega$ has
  non-empty interior in Lemma~\ref{lemma:equivalence-subset}}
\label{sec:has-interior}

Let $H = \affine{\Omega}$ be the affine hull of $\Omega$, where
$\dim H = l \ge 1$. (If $\dim{H} = 0$, then $\Omega$ is a single point, and
Lemma~\ref{lemma:equivalence-subset} is trivial.) We argue that if
Lemma~\ref{lemma:equivalence-subset} holds for sets $\Omega$ such that
$\interior{\Omega} \neq \varnothing$ it holds generally; thus
we temporarily assume its truth
for convex $\Omega$ with $\interior{\Omega} \neq \varnothing$.

Since $\dim{H} = l$, we have $H = \left\{Av + d \mid v \in \R^l \right\}$
for some full column-rank matrix $A \in \R^{k \times l}$ and $d \in \R^k$.
As $\Omega$ has non-empty interior relative to $H$ (e.g.~\cite[Theorem
  2.1.3]{HiriartUrrutyLe93ab}), we have $\Omega = \{Av + d \mid v \in \Omega_0
\}$ for a convex set $\Omega_0 \subset \R^l$ with $\interior \Omega_0 \neq
\varnothing$. Defining $\wt{f}_1(v) = f_1(A v + d)$ and $\wt{f}_2(v) =
f_2(Av + d)$, where $\wt{f}_i(v) = \infty$ for $v \not \in \Omega_0$, we
have that $\wt{f}_1$ and $\wt{f}_2$ are order equivalent on $\Omega_0
\subset \R^l$. By assumption, Lemma~\ref{lemma:equivalence-subset} holds
for $\Omega_0$, so there exist $a>0, \wt{b}\in \R^l, \wt{c} \in \R$ such that
\begin{equation*}
 f_1(Av + d) = \wt{f}_1(v) = a \wt{f}_2(v) + \langle \wt{b}, v\rangle + c,
 ~~ \mbox{for~}
 v \in \Omega_0.
\end{equation*}

As $A$ is full column rank, for all $t \in \Omega$ there exists a
unique $v_t \in \Omega_0$ such that $t = A v_t + d$, and the mapping $t
\mapsto v_t$ is linear, i.e., $v_t = Et + g$ for some $E \in \R^{l \times
 k}$ and $g \in \R^l$ (we may take $E \in \R^{l \times k}$ to be any
left-inverse of $A$ and $g = -E d$, so $v = Et - Eg$). We obtain
that $f_i(t) = \wt{f}_i(Et + g)$ for $i = 1, 2$, whence
\begin{equation*}
  f_1(t) = \wt{f}_1(Et + g)
  = a \wt{f}_2(Et + g) + \langle\wt{b}, Et + g\rangle + \wt{c}
  = a f_2(t) + \langle E^T \wt{b}, t\rangle + \langle\wt{b}, g\rangle + \wt{c}
\end{equation*}
for all $t \in \Omega$,
which is our desired result.
From this point forward, we thus assume w.l.o.g.\ that $\interior{\Omega}
\neq \varnothing$.

\subsubsection{Covering sets with simplices}
\label{sec:cover-simplices}

With Lemma~\ref{lemma:equivalence-on-simplex}
in hand, we now show that the special case for simplices is sufficient to
show the general Lemma~\ref{lemma:equivalence-subset}.
First, we show that simplices essentially cover convex sets $\Omega$.
\begin{lemma}
  \label{lemma:points-in-simplices}
  Let $v, w$ be arbitrary points in $\interior \Omega \subset \R^k$. Then,
  there exist $u_0, u_1, \ldots, u_k \in \Omega$ such that $v, w \in
  \interior E$, where $E = \conv\{u_0, u_1, \ldots, u_k\}$. For
  any points $u_2, \ldots, u_k$ that make $v, w, u_2, \ldots, u_k$ affinely
  independent (Def.~\ref{def:affine-independent}), there exist $u_0, u_1 \in
  \Omega$ such that $v, w \in \interior \conv\{u_0, u_1, \ldots, u_k\}$.
\end{lemma}
\noindent
Before proving Lemma~\ref{lemma:points-in-simplices}, we state a technical
lemma about interior points of convex sets.
\begin{lemma}[Hiriart-Urruty and Lemar\'echal~\cite{HiriartUrrutyLe93ab},
    Lemma III.2.1.6]
  \label{lemma:interior-line-segments}
  Let $C \subset \R^k$ be a convex set, $u \in \relint C$ and $v \in \cl
  C$. Then for any $\lambda \in \openright{0}{1}$, we have $\lambda u + (1 -
  \lambda) v \in \relint C$.
\end{lemma}
\begin{proof-of-lemma}[\ref{lemma:points-in-simplices}]
  \begin{figure}[ht]
    \begin{center}
      \psfrag{x}{$v$}
      \psfrag{y}{$w$}
      \psfrag{x0}{$u_0$}
      \psfrag{x1}{$u_1$}
      \psfrag{x2}{$u_2$}
      \psfrag{Omega}{{\Large $\Omega$}}
      \includegraphics[width=.6\columnwidth,clip=true]{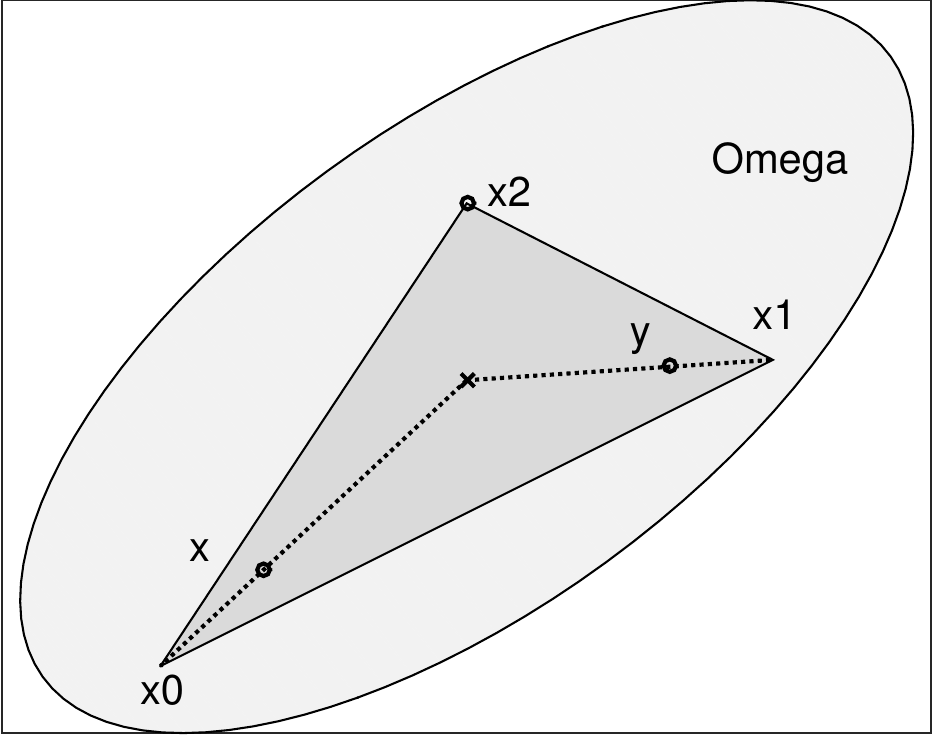}
      \caption{\label{fig:points-in-simplices} The construction in
        Lemma~\ref{lemma:points-in-simplices}.}
    \end{center}
  \end{figure}
  Take $u_2, \ldots, u_k \in \Omega$ arbitrarily but in general position, so
  that the points $v, w, u_2, \ldots, u_k$ and $w, v, u_2, \ldots, u_k$ are
  affinely independent (Def.~\ref{def:affine-independent}). Now, define
  $\ucent = \frac{1}{k+1} (v + w + \sum_{i=2}^k u_i)$, so that $\ucent \in
  \interior \Omega$ (Lemma~\ref{lemma:interior-line-segments}), and choose
  $\epsilon > 0$ small enough that the points $u_0 = v + \epsilon(v -
  \ucent)$ and $u_1 = w + \epsilon(w - \ucent)$ are both in $\Omega$. (This
  is possible as $v, w \in \interior \Omega$; see
  Figure~\ref{fig:points-in-simplices}.)  Then we find that $v = (u_0 +
  \epsilon \ucent) / (1 + \epsilon)$ and $w = (u_1 + \epsilon \ucent) / (1 +
  \epsilon)$, so that
  \begin{equation*}
    \ucent = \frac{1}{k + 1}
    \left(\frac{u_0}{1 + \epsilon} + \frac{u_1}{1 + \epsilon}
    + \frac{2 \epsilon}{1 + \epsilon} \ucent + \sum_{i=2}^k u_i
    \right),
  \end{equation*}
  or by rearranging, $\ucent = \lambda_0 (u_0 + u_1) + \lambda_1
  \sum_{i=2}^k u_i$, where
  \begin{align*}
    \lambda_0 & = \left(1 - \frac{2\epsilon}{(k + 1)(1 + \epsilon)}\right)^{-1}
    \frac{1}{(k + 1)(1 + \epsilon)}
    ~~ \mbox{and} ~~ \\
    \lambda_1 & = \left(1 - \frac{2\epsilon}{(k + 1)(1 + \epsilon)}\right)^{-1}
    \frac{1}{k + 1}.
  \end{align*}
  Noting that $2 \lambda_0 + (k-1) \lambda_1 = 1$ and $\lambda_i > 0$,
  we obtain that
  $\ucent \in \interior \conv\{u_0, \ldots, u_k\}$.
  As the points $u_i$ are in general position,
  and as
  \begin{equation*}
    v = \frac{1}{1 + \epsilon} u_0 + \frac{\epsilon}{1 + \epsilon} \ucent
    ~~ \mbox{and} ~~
    w = \frac{1}{1 + \epsilon} u_1 + \frac{\epsilon}{1 + \epsilon} \ucent,
  \end{equation*}
  we have $v, w \in \interior \conv\{u_0, \ldots, u_k\}$ by
  Lemma~\ref{lemma:interior-line-segments}.
\end{proof-of-lemma}

\subsubsection{Extension from a single simplex to all of $\Omega$}
\label{sec:extend-to-everything}

We use Lemma~\ref{lemma:points-in-simplices} to show the following lemma,
which implies Lemma~\ref{lemma:equivalence-subset}.
\begin{lemma}
  \label{lemma:sub-simplices}
  In addition to the conditions of Lemma~\ref{lemma:equivalence-subset},
  assume that for all simplices $E \subset
  \interior{\Omega}$ there exist $a_E > 0$, $b_E \in \R^k$, and $c_E \in
  \R$ such that $f_1(t) = a_E f_2(t) + b_E^T t + c_E$ for $t \in E$. Then
  there exist $a > 0, b\in \R^k$, and $c \in \R$ such that
  \begin{equation*}
    f_1(t) = af_2(t) + b^T t + c
    ~~ \mbox{for~} t \in \Omega.
  \end{equation*}
\end{lemma}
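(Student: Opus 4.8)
The plan is to establish the identity first on $\interior{\Omega}$ with a single triple $(a,b,c)$, $a>0$, and then upgrade it to all of $\Omega$: since $\interior{\Omega}$ is dense in the convex set $\Omega$ and both $f_1$ and $t\mapsto af_2(t)+b^Tt+c$ are closed convex and finite on $\Omega$, Lemma~\ref{lemma:equal-functions} gives the identity on $\Omega$. Throughout I use the elementary fact that a convex function that is affine on a neighborhood of every point of a connected open set is affine on that set (its gradient then exists and is locally constant, hence constant, and $\interior{\Omega}$ is open and connected). The argument splits according to whether $f_2$ is affine on $\interior{\Omega}$.

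If $f_2$ is affine on $\interior{\Omega}$, say $f_2(t)=p^Tt+q$ there, then for any simplex $E\subset\interior{\Omega}$ the hypothesis expresses $f_1$ as an affine function on $E$; since every $x\in\interior{\Omega}$ lies in the interior of some small full-dimensional simplex contained in $\interior{\Omega}$, $f_1$ is affine on a neighborhood of each point of $\interior{\Omega}$ and hence affine on $\interior{\Omega}$, say $f_1(t)=r^Tt+s$. Then $(a,b,c)=(1,\,r-p,\,s-q)$ works on $\interior{\Omega}$, with $a=1>0$.

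If $f_2$ is not affine on $\interior{\Omega}$, then by the fact above there is a point $x_0\in\interior{\Omega}$ such that $f_2$ is not affine on any neighborhood of $x_0$. Fix a simplex $E_0\subset\interior{\Omega}$ with $x_0\in\interior{E_0}$ (a small simplex around $x_0$), set $(a,b,c):=(a_{E_0},b_{E_0},c_{E_0})$, and note $a>0$. For arbitrary $x\in\interior{\Omega}$ with $x\neq x_0$, Lemma~\ref{lemma:points-in-simplices}---after shrinking the resulting simplex slightly toward its centroid, using Lemma~\ref{lemma:interior-line-segments}, so that the closed simplex lies inside $\interior{\Omega}$---yields a simplex $E\subset\interior{\Omega}$ with $x,x_0\in\interior{E}$. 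On the open neighborhood $\interior{E}\cap\interior{E_0}$ of $x_0$ we have both $f_1(t)=a_Ef_2(t)+b_E^Tt+c_E$ and $f_1(t)=af_2(t)+b^Tt+c$, so $(a_E-a)f_2(t)=(b-b_E)^Tt+(c-c_E)$ there. Were $a_E\neq a$, this would make $f_2$ affine on a neighborhood of $x_0$, contradicting the choice of $x_0$; hence $a_E=a$, and then the affine function on the right vanishes on an open set, forcing $b_E=b$ and $c_E=c$. Therefore $f_1(x)=a_Ef_2(x)+b_E^Tx+c_E=af_2(x)+b^Tx+c$; since $x$ was arbitrary (the case $x=x_0$ being immediate from $x_0\in E_0$), the identity holds on $\interior{\Omega}$, and Lemma~\ref{lemma:equal-functions} finishes the proof.

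The main obstacle is the case split itself: the \emph{local rigidity} of the simplex-dependent constants $(a_E,b_E,c_E)$ that drives the second case depends on exhibiting a point near which $f_2$ genuinely fails to be affine, which in turn rests on the connectedness argument above; without this one cannot pin $(a_E,b_E,c_E)$ down across overlapping simplices. The remaining points---that the covering simplices may be taken inside $\interior{\Omega}$, the passage from ``$f_1$ affine on all small simplices'' to ``$f_1$ affine on $\interior{\Omega}$'', and the final density argument---are routine convex analysis.
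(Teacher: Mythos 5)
Your proof is correct and follows essentially the same strategy as the paper's: a case split on affineness, and in the non-affine case a rigidity argument showing the simplex constants $(a_E,b_E,c_E)$ must agree on overlapping simplices containing a fixed witness of non-affineness, followed by the density/closedness extension via Lemma~\ref{lemma:equal-functions}. The only (harmless) differences are that you anchor at a single point where $f_2$ fails to be locally affine rather than at a pair of points where $\nabla f_1$ disagrees or fails to exist, and you cover $\interior{\Omega}$ pointwise rather than showing a union of simplices is dense.
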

\noindent
Coupled with Lemma~\ref{lemma:equivalence-on-simplex},
Lemma~\ref{lemma:sub-simplices} immediately yields
Lemma~\ref{lemma:equivalence-subset}; indeed,
Lemma~\ref{lemma:equivalence-on-simplex} shows that for
any simplex $E$ the conditions of Lemma~\ref{lemma:sub-simplices}
holds, so that Lemma~\ref{lemma:equivalence-subset}
follows, i.e., $f_1(t) = a f_2(t) + b^Tt + c$ for all $t \in \Omega$.

\begin{proof}
  For $i \in \{1, 2\}$ define the sets
  \begin{align*}
    S_i & = \big\{\{(u, v) \in \interior \Omega \times \interior \Omega
    \mid \nabla f_i(u) \neq \nabla f_i(v)\} \\
    & ~~~~
    \cup \left\{(u, v) \in \interior \Omega \times \interior \Omega
    \mid  \nabla 
    f_i(u) \mbox{ or } \nabla f_i(v) \mbox{ does not exist}\right\}
    \big\} \subset \Omega \times \Omega.
  \end{align*}
  We divide our discussion into two cases.

  \paragraph{Case 1} First we suppose that $S_1 = S_2 = \varnothing$. 
  Then for $i=1, 2$, $f_i$ are differentiable in $\interior \Omega$ in this case, we
  have that $\nabla f_i(u) = \nabla f_i(v)$ for all $u, v \in \interior \Omega$.
  Then by continuity of the $f_i$ on
  $\interior \Omega$, we must have $f_i(t) = b_i^Tt + c_i$ for $i = 1, 2$.  The
  result follows by taking $a = 1$, $b = b_1 - b_2$ and $c = c_1 - c_2$
  and applying Lemma~\ref{lemma:equal-functions}.
  
  \paragraph{Case 2}
  We have that at least one of $S_1$ and $S_2$ is non-empty.
  Without loss of generality, say $S_1 \neq \varnothing$. 
  Choose a pair $(u\opt, v\opt) \in S_1$ and
  consider the collection of sets
  \begin{equation*}
    \mc{M} =
    \left\{E = \conv\{u_0, u_1, \ldots, u_k\} \mid
    u\opt, v\opt \in \interior  E ~ \mbox{and} ~ u_0, u_1, \ldots u_k \in \Omega
    \right\}.
  \end{equation*} 
  We show that for any $E_1, E_2 \in \mc{M}$, we have $a_{E_1} = a_{E_2}$,
  $b_{E_1} = b_{E_2}$ and $c_{E_1} = c_{E_2}$. We know that $\interior E_1
  \cap \interior E_2 \neq \varnothing$, as $u\opt \in \interior E$ for all $E
  \in \mc{M}$. Now, assume for the sake of contradiction that $a_{E_1} \neq
  a_{E_2}$. For all $t \in E_1 \cap E_2$ we have
  \begin{equation}
    \label{eqn:equal-f-on-simplices}
    f_1(t) = a_{E_1}f_2(t) + b_{E_1}^T t + c_{E_1}
    ~~ \mbox{and} ~~
    f_1(t) = a_{E_2}f_2(t) + b_{E_2}^T t + c_{E_2}.
  \end{equation}
  By subtracting the preceding equations from one another after multiplying
  by $a_{E_2}$ and $a_{E_1}$, respectively, one obtains
  that for
  $t \in E_1 \cap E_2$,
  \begin{equation*}
    f_1(t) = \frac{1}{a_{E_2} - a_{E_1}}\left[(a_{E_2} b_{E_1} - 
      a_{E_1} b_{E_2})^T t + (a_{E_2} c_{E_1} - a_{E_1} c_{E_2}) \right].
  \end{equation*}
  This yields a contradiction, since we have assumed that either (i) $\nabla
  f_1(u\opt)$ or $\nabla f_2(v\opt)$ does not exist or (ii) $\nabla
  f_1(u\opt) \neq \nabla f_1(v\opt)$, while $u\opt, v\opt \in \interior E_1
  \cap \interior E_2 =\interior (E_1 \cap E_2)$.  Thus $a_{E_1} =
  a_{E_2}$. To obtain $b_{E_1} = b_{E_2}$, note that $\interior (E_1 \cap
  E_2) \neq \varnothing$, as $u\opt, v\opt \in \interior (E_1 \cap E_2)$.
  Subtracting the equalities~\eqref{eqn:equal-f-on-simplices}, we find $0 =
  f_1(t) - f_1(t) = (b_{E_1} - b_{E_2})^T t + c_{E_1} - c_{E_2}$ for $t \in
  E_1 \cap E_2$, which has non-empty interior.  That $b_{E_1}=b_{E_2}$
  immediately implies $c_{E_1}=c_{E_2}$. Hence, there exist some $a > 0,
  b\in \R^k$, and $c\in \R$ such that for all sets $E\in \mc{M}$, we have
  $a_E = a, b_E = b$, and $c_E = c$.

  We complete the proof by showing that $\bigcup \{E \mid E \in \mc{M}\}$
  is dense in $\Omega$.
  Define $\Omega^\circ \subset \Omega$ as
  \begin{equation*}
    \Omega^\circ = \left\{t \in \Omega
    \mid
    \begin{array}{l}
      \mbox{there~exist~} u_3, \ldots, u_k \\
      \mbox{~~such that~} u\opt, v\opt, t, u_3,
      \ldots, u_k ~ \mbox{are~affinely~independent}
    \end{array}
    \right\}.
  \end{equation*}
  The set $\Omega^\circ$ forms a dense subset of $\Omega$, as $u\opt \in
  \interior \Omega \neq \varnothing$.  For any $t \in \Omega^\circ$,
  Lemma~\ref{lemma:points-in-simplices} guarantees the existence of $E \in
  \mc{M}$ such that $t \in E$ and $u\opt, v\opt \in \interior E$. We thus
  have $\Omega^\circ \subset \bigcup\{E \mid E \in \mc{M}\}$.  As $f_1(t) =
  a f_2(t) + b^Tt + c$ for all $t \in \Omega^\circ$ by the previous
  paragraph, Lemma~\ref{lemma:equal-functions} allows us to
  extend the equality to $f_1(t) = a
  f_2(t) + b^Tt + c$ for all $t \in \Omega$.
\end{proof}

\subsection{Proofs of auxiliary lemmas for
  Lemma~\ref{lemma:equivalence-on-simplex}}
\label{sec:proof-auxiliary-simplex-lemmas}

\subsubsection{Proof of Lemma~\ref{lemma:order-equivalence-rationals}}

For each $i$, let $\lambda_i = \hinge{\alpha_i}$ and $\nu_i =
\hingeneg{\alpha_i}$ be the positive and negative parts of the $\alpha_i$,
and let $\lambda_i = \frac{r_i}{s}$ and $-\nu_i = \frac{q_i}{s}$ where
$q_i, r_i, s \in \N$. Then we have
\begin{equation*}
  \sum_{i=1}^m \lambda_i u_i = v - \sum_{i=1}^m \nu_i u_i
  ~~ \mbox{or} ~~
  \sum_{i=1}^m r_i u_i = s v + \sum_{i=1}^m q_i u_i,
\end{equation*}
where $\ones^T r = s + \ones^T q$ as $\ones^T \alpha = 1$.
Defining the matrices
\begin{equation*}
  A = [\underbrace{u_1 ~ \cdots ~ u_1}_{r_1~{\rm times}}
    \cdots \underbrace{u_m ~ \cdots ~ u_m}_{r_m~{\rm times}}
  ]
  ~ \mbox{and} ~
  B = [\underbrace{v ~ \cdots ~ v}_{s~{\rm times}}
    ~ \underbrace{u_1 ~ \cdots ~ u_1}_{q_1~{\rm times}}
    \cdots \underbrace{u_m ~ \cdots ~ u_m}_{q_m~{\rm times}}
  ],
\end{equation*}
we have $A\ones = B\ones$ with columns in $\Omega$.
Then order equivalence~\eqref{eqn:equivalent-order-equivalence}
implies
\begin{equation*}
  \sum_{i=1}^m r_i f_1(u_i)
  \le s f_1(v) + \sum_{i=1}^m q_i f_1(u_i)
  ~~ \mbox{iff} ~~
  \sum_{i=1}^m r_i f_2(u_i)
  \le s f_2(v) + \sum_{i=1}^m q_i f_2(u_i).
\end{equation*}
Each of these is equivalent to
$\sum_{i=1}^m \alpha_i f_j(u_i) \le f_j(v)$ for $j = 1, 2$.

\subsubsection{Proof of Lemma~\ref{lemma:make-same-on-basis}}

We assume that $f_1$ and $f_2$ are non-linear over $\conv\{u_0, \ldots,
u_k\}$, as otherwise the result is trivial.
Let the vectors $g_1$ and $g_2$ and matrix $\entropy \in \R^{k \times k}$ be
defined by
\begin{equation*}
  \entropy = \left[\begin{matrix} u_1^T - u_0^T \\ 
      \vdots \\ u_k^T - u_0^T \end{matrix}\right],
  ~~
  g_1 = \left[\begin{matrix} f_1(u_1) - f_1(u_0) \\ \vdots
      \\ f_1(u_k) - f_1(u_0) \end{matrix}\right],
  ~~
  g_2 = \left[\begin{matrix} f_2(u_1) - f_2(u_0) \\ \vdots
      \\ f_2(u_k) - f_2(u_0) \end{matrix}\right].
\end{equation*}
For any $a > 0$, define the vector $b(a) \in \R^k$ so that
\begin{equation*}
  b(a)^T(u_i - u_0) = f_1(u_i) - f_1(u_0) - a(f_2(u_i) - f_2(u_0))
  = g_{1,i} - a g_{2,i},
\end{equation*}
that is, as $b(a) = \entropy^{-1} (g_1 - a g_2)$, which is possible as $\entropy$ is full
rank. By choosing $c(a) = f_1(u_0) - a f_2(u_0) - b(a)^T u_0$, we then
obtain that
\begin{align*}
  f_1(u_i) & = a f_2(u_i) + b(a)^T u_i + c(a)
\end{align*}
by algebraic manipulations. We now consider $\ucent$. We have
\begin{align*}
  b(a)^T \ucent
  & = \frac{1}{k + 1} \sum_{i=0}^k b(a)^T u_i
  = b(a)^T u_0
  + \frac{1}{k+1} \ones^T \entropy b(a) \\
  & = b(a)^T u_0 + \frac{1}{k+1} \ones^T(g_1 - a g_2),
\end{align*}
so that
\begin{align*}
  \lefteqn{a f_2(\ucent) + b(a)^T\ucent + c(a)} \\
  & = a f_2(\ucent) + 
  \sum_{i=1}^k
  \frac{f_1(u_i) - f_1(u_0) + a f_2(u_0) - a f_2(u_i)}{k+1}
  + f_1(u_0) - a f_2(u_0) \\ 
  & = a f_2(\ucent) + \frac{1}{k+1} \sum_{i=0}^k f_1(u_i)
  - a \frac{1}{k+1} \sum_{i=0}^k f_2(u_i).
\end{align*}
Thus we may choose an $a > 0$ such that our desired equalities hold if and
only if there exists $a > 0$ such that
\begin{equation*}
  f_1(\ucent) - \frac{1}{k+1} \sum_{i=0}^k f_1(u_i)
  = a f_2(\ucent) - a \frac{1}{k + 1} \sum_{i=0}^k f_2(u_i).
\end{equation*}
By the assumption that $f_1$ and $f_2$ are non-linear, we have that (by
Lemma~\ref{lemma:center-makes-linear}) $f_1(\ucent) < \frac{1}{k+1}
\sum_{i=0}^k f_1(u_i)$ and $f_2(\ucent) < \frac{1}{k+1} \sum_{i=0}^k
f_2(u_i)$. Thus, setting
\begin{equation*}
  a = \frac{
    f_1(\ucent) - \frac{1}{k+1} \sum_{i=0}^k f_1(u_i)}{
    f_2(\ucent) - \frac{1}{k+1} \sum_{i=0}^k f_2(u_i)}
  > 0
\end{equation*}
gives the desired result.

\subsubsection{Proof of Lemma~\ref{lemma:center-makes-linear}}
Without loss of generality, we assume that $\lambda_1 = \linf{\lambda}$
and that $\lambda_1 > 1/m$. Then we have
\begin{equation}
  \label{eqn:inequality-to-center-linear}
  \begin{split}
    f\left(\sum_{i=1}^m \lambda_i u_i\right)
    \le \sum_{i=1}^m \lambda_i f(u_i)
    & = \lambda_1 \sum_{i=1}^m f(u_i) + \sum_{i=1}^m (\lambda_i - \lambda_1)
    f(u_i) \\
    & = m \lambda_1 f(\ucent) + \sum_{i=2}^m (\lambda_1 - \lambda_i)
    f(u_i).
  \end{split}
\end{equation}
Let $\Lambda = \sum_{i=1}^m (\lambda_1 - \lambda_i)
= m \lambda_1 - 1 > 0$. Then
\begin{equation*}
  \ucent
  = \frac{1}{m \lambda_1} \sum_{i=1}^m \lambda_i u_i
  + \frac{\Lambda}{m \lambda_1} \sum_{i=1}^m \frac{\lambda_1 - \lambda_i}{
    \Lambda} u_i,
\end{equation*}
and we have
\begin{align*}
  m \lambda_1 f(\ucent)
  & \le n\lambda_1 \left[
    \frac{1}{m \lambda_1} f\left(\sum_{i=1}^m \lambda_i u_i\right)
    + \frac{\Lambda}{m \lambda_1}
    f\left(\sum_{i=2}^m \frac{\lambda_1 - \lambda_i}{\Lambda} u_i\right)
    \right] \\
  & \le f\left(\sum_{i=1}^m \lambda_i u_i\right)
  + \Lambda \sum_{i=2}^m \frac{\lambda_1 - \lambda_i}{\Lambda} f(u_i).
\end{align*}
The inequalities~\eqref{eqn:inequality-to-center-linear}
must have been equalities, giving the result.

\comment{
As outlined above, we first show Lemma~\ref{lemma:equivalence-subset} holds
for simplices. To do this, we require the definition of affine independence.
\begin{definition}
  \label{def:affine-independent}
  Points $u_0, u_1, \ldots u_m \in \R^k$, $m \le k + 1$, are \emph{affinely
    independent} if the vectors
  \begin{equation*}
    u_1 - u_0, u_2 - u_0, \ldots, u_m - u_0,
  \end{equation*}
  are linearly independent. A set $E \subset \R^k$ is a \emph{simplex} if $E
  = \conv\{u_0, u_1, \ldots, u_k\}$ where $u_0, \ldots, u_k$ are affinely
  independent.
\end{definition}
\noindent
Note that the simplex $E = \conv\{u_0, u_1, \ldots, u_k\} \subset \R^k$
has non-empty interior.

We now state and prove four technical lemmas that we use
to prove Lemma~\ref{lemma:equivalence-on-simplex}, which states
that Lemma~\ref{lemma:equivalence-subset} holds on simplices.
\begin{lemma}
  \label{lemma:order-equivalence-rationals}
  Let $u_1, \ldots, u_m \in \Omega$, $\alpha \in \Q^m$ satisfy
  $\ones^T\alpha = 1$, and $y \in
  \Omega$ with $y = \sum_{i=1}^m \alpha_i u_i$. If $f_1, f_2 : \Omega \to \R$
  are order equivalent, then
  \begin{equation*}
    \sum_{i=1}^m \alpha_i f_1(u_i) \le f_1(y)
    ~~ \mbox{if and only if} ~~
    \sum_{i=1}^m \alpha_i f_2(u_i) \le f_2(y).
  \end{equation*}
\end{lemma}
\begin{proof}
  For each $i$, let $\lambda_i = \hinge{\alpha_i}$ and $\nu_i =
  \hingeneg{\alpha_i}$ be the positive and negative parts of the $\alpha_i$,
  and let $\lambda_i = \frac{r_i}{s}$ and $-\nu_i = \frac{q_i}{s}$ where
  $q_i, r_i, s \in \N$. Then we have
  \begin{equation*}
    \sum_{i=1}^m \lambda_i u_i = y - \sum_{i=1}^m \nu_i u_i
    ~~ \mbox{or} ~~
    \sum_{i=1}^m r_i u_i = s y + \sum_{i=1}^m q_i u_i,
  \end{equation*}
  where $\ones^T r = s + \ones^T q$ as $\ones^T \alpha = 1$.
  Defining the matrices
  \begin{equation*}
    A = [\underbrace{u_1 ~ \cdots ~ u_1}_{r_1~{\rm times}}
      \cdots \underbrace{u_m ~ \cdots ~ u_m}_{r_m~{\rm times}}
    ]
    ~ \mbox{and} ~
    B = [\underbrace{y ~ \cdots ~ y}_{s~{\rm times}}
      ~ \underbrace{u_1 ~ \cdots ~ u_1}_{q_1~{\rm times}}
      \cdots \underbrace{u_m ~ \cdots ~ u_m}_{q_m~{\rm times}}
    ],
  \end{equation*}
  we have $A\ones = B\ones$ with columns in $\Omega$.
  Then order equivalence~\eqref{eqn:equivalent-order-equivalence}
  implies
  \begin{equation*}
    \sum_{i=1}^m r_i f_1(u_i)
    \le s f_1(y) + \sum_{i=1}^m q_i f_1(u_i)
    ~~ \mbox{iff} ~~
    \sum_{i=1}^m r_i f_2(u_i)
    \le s f_2(y) + \sum_{i=1}^m q_i f_2(u_i).
  \end{equation*}
  Each of these is equivalent to
  $\sum_{i=1}^m \alpha_i f_j(u_i) \le f_j(y)$ for $j = 1, 2$.
\end{proof}

\noindent
As an immediate consequence
of Lemma~\ref{lemma:order-equivalence-rationals}, we see that if $\alpha \in
\Q^n_+$ satisfies $\ones^T\alpha = 1$ and $u_1, \ldots, u_n \in \R^k_+$,
then
\begin{equation}
  \label{eqn:order-equivalent-equality}
  f_1\left(\sum_{i=1}^n \alpha_i u_i \right)
  = \sum_{i=1}^n \alpha_i f_1(u_i)
  ~~ \mbox{iff} ~~
  f_2\left(\sum_{i=1}^n \alpha_i u_i\right)
  = \sum_{i=1}^n \alpha_i f_2(u_i).
\end{equation}

Closed convex functions equal on a dense subset of a convex set $\Omega$ are
equal on $\Omega$ (e.g.~\cite[Proposition IV.1.2.5]{HiriartUrrutyLe93ab});
this means we need only prove equivalence results for convex
functions on dense subsets of their domains.
\begin{lemma}
  \label{lemma:equal-functions}
  Let $f_1, f_2 : \Omega \to \R$ be closed convex
  functions satisfying $f_1(t)
  = f_2(t)$ for all $t$ in a dense subset of $\Omega$. Then
  $f_1(t) = f_2(t)$ for $t \in \Omega$.
\end{lemma}

The next lemma shows that we can make the
equality~\eqref{eqn:relation-f-order-equivalent} hold for the extreme points
and centroid of any simplex.
\begin{lemma}
  \label{lemma:make-same-on-basis}
  Let $f_1, f_2 : \Omega \to \R$
  be closed convex and let $u_0, u_1, \ldots, u_k \in \Omega \subset \R^k$ be
  affinely independent. Then there exist $a > 0, b \in \R^k$, and $c$ such
  that $f_1(x) = a f_2(x) + b^T x + c$ for $x \in \{u_0, \ldots, u_k,
  \ucent\}$, where $\ucent = \frac{1}{k+1} \sum_{i=0}^k u_i$.
\end{lemma}

\begin{proof}
We assume that $f_1$ and $f_2$ are non-linear over $\conv\{u_0, \ldots,
u_k\}$, as otherwise the result is trivial.
Let the vectors $g_1$ and $g_2$ and matrix $X \in \R^{k \times k}$ be
defined by
\begin{equation*}
  X = \left[\begin{matrix} u_1^T - u_0^T \\ 
      \vdots \\ u_k^T - u_0^T \end{matrix}\right],
  ~~
  g_1 = \left[\begin{matrix} f_1(u_1) - f_1(u_0) \\ \vdots
      \\ f_1(u_k) - f_1(u_0) \end{matrix}\right],
  ~~
  g_2 = \left[\begin{matrix} f_2(u_1) - f_2(u_0) \\ \vdots
      \\ f_2(u_k) - f_2(u_0) \end{matrix}\right].
\end{equation*}
For any $a > 0$, define the vector $b(a) \in \R^k$ so that
\begin{equation*}
  b(a)^T(u_i - u_0) = f_1(u_i) - f_1(u_0) - a(f_2(u_i) - f_2(u_0))
  = g_{1,i} - a g_{2,i},
\end{equation*}
that is, as $b(a) = X^{-1} (g_1 - a g_2)$, which is possible as $X$ is full
rank. By choosing $c(a) = f_1(u_0) - a f_2(u_0) - b(a)^T u_0$, we then
obtain that
\begin{align*}
  f_1(u_i) & = a f_2(u_i) + b(a)^T u_i + c(a)
\end{align*}
by algebraic manipulations. We now consider $\ucent$. We have
\begin{align*}
  b(a)^T \ucent
  & = \frac{1}{k + 1} \sum_{i=0}^k b(a)^T u_i
  = b(a)^T u_0
  + \frac{1}{k+1} \ones^T X b(a)
  = b(a)^T u_0 + \frac{1}{k+1} \ones^T(g_1 - a g_2),
\end{align*}
so that
\begin{align*}
  \lefteqn{a f_2(\ucent) + b(a)^T\ucent + c(a)} \\
  & = a f_2(\ucent) + \frac{1}{k+1}
  \sum_{i=1}^k \left[f_1(u_i) - f_1(u_0) + a f_2(u_0) - a f_2(u_i)\right]
  + f_1(u_0) - a f_2(u_0) \\ 
  & = a f_2(\ucent) + \frac{1}{k+1} \sum_{i=0}^k f_1(u_i)
  - a \frac{1}{k+1} \sum_{i=0}^k f_2(u_i).
\end{align*}
Thus we may choose an $a > 0$ such that our desired equalities hold if and
only if there exists $a > 0$ such that
\begin{equation*}
  f_1(\ucent) - \frac{1}{k+1} \sum_{i=0}^k f_1(u_i)
  = a f_2(\ucent) - a \frac{1}{k + 1} \sum_{i=0}^k f_2(u_i).
\end{equation*}
By the assumption that $f_1$ and $f_2$ are non-linear, we have that (by
Lemma~\ref{lemma:center-makes-linear}) $f_1(\ucent) < \frac{1}{k+1}
\sum_{i=0}^k f_1(u_i)$ and $f_2(\ucent) < \frac{1}{k+1} \sum_{i=0}^k
f_2(u_i)$. Thus, setting
\begin{equation*}
  a = \frac{
    f_1(\ucent) - \frac{1}{k+1} \sum_{i=0}^k f_1(u_i)}{
    f_2(\ucent) - \frac{1}{k+1} \sum_{i=0}^k f_2(u_i)}
  > 0
\end{equation*}
gives the desired result.
\end{proof}

\noindent
Lastly, we characterize the linearity of convex functions over convex
hulls of finite collections of points.
\begin{lemma}
  \label{lemma:center-makes-linear}
  Let $f : \Omega \to \R$ be convex with $u_1, \ldots, u_m \in \Omega$ and
  $\ucent = \frac{1}{m} \sum_{i=1}^m u_i$. If
  $f(\ucent) = \frac{1}{m} \sum_{i=1}^m f(u_i)$, then
  for all $\lambda \in \R^m_+$ such that $\ones^T \lambda = 1$,
  \begin{equation*}
    f\left(\sum_{i=1}^m \lambda_i u_i\right) = \sum_{i=1}^m \lambda_i f(u_i).
  \end{equation*}
\end{lemma}
\begin{proof}
Without loss of generality, we assume that $\lambda_1 = \linf{\lambda}$
and that $\lambda_1 > 1/m$. Then we have
\begin{equation}
  \label{eqn:inequality-to-center-linear}
  \begin{split}
    f\left(\sum_{i=1}^m \lambda_i u_i\right)
    \le \sum_{i=1}^m \lambda_i f(u_i)
    & = \lambda_1 \sum_{i=1}^m f(u_i) + \sum_{i=1}^m (\lambda_i - \lambda_1)
    f(u_i) \\
    & = m \lambda_1 f(\ucent) + \sum_{i=2}^m (\lambda_1 - \lambda_i)
    f(u_i).
  \end{split}
\end{equation}
Let $\Lambda = \sum_{i=1}^m (\lambda_1 - \lambda_i)
= m \lambda_1 - 1 > 0$. Then
\begin{equation*}
  \ucent = \frac{1}{m \lambda_1} \sum_{i=1}^m \lambda_i u_i
  + \frac{1}{m \lambda_1} \sum_{i=1}^m (\lambda_1 - \lambda_i) u_i
  = \frac{1}{m \lambda_1} \sum_{i=1}^m \lambda_i u_i
  + \frac{\Lambda}{m \lambda_1} \sum_{i=1}^m \frac{\lambda_1 - \lambda_i}{
    \Lambda} u_i,
\end{equation*}
and we have
\begin{align*}
  m \lambda_1 f(\ucent)
  & \le n\lambda_1 \left[
    \frac{1}{m \lambda_1} f\left(\sum_{i=1}^m \lambda_i u_i\right)
    + \frac{\Lambda}{m \lambda_1}
    f\left(\sum_{i=2}^m \frac{\lambda_1 - \lambda_i}{\Lambda} u_i\right)
    \right] \\
  & \le f\left(\sum_{i=1}^m \lambda_i u_i\right)
  + \Lambda \sum_{i=2}^m \frac{\lambda_1 - \lambda_i}{\Lambda} f(u_i).
\end{align*}
In particular, the inequalities~\eqref{eqn:inequality-to-center-linear}
must have been equalities, giving the result.
\end{proof}

We now state and prove Lemma~\ref{lemma:equivalence-subset} for simplices
using the preceding four lemmas, assuming that
$\interior{\Omega} \neq \varnothing$.
\begin{lemma}
  \label{lemma:equivalence-on-simplex}
  Let $E = \conv \{u_0, \ldots, u_k\} \subset \Omega$ where $u_0, \ldots, u_k$
  are affinely independent. If $f_1$ and $f_2$ are order equivalent,
  then there exist $a > 0$, $b \in \R^k$, and $c \in \R$ such that
  \begin{equation*}
    f_1(t) = a f_2(t) + b^Tt + c
    ~~ \mbox{for~all~} t \in E.
  \end{equation*}
\end{lemma}
\begin{proof}
  We reduce the problem to applying in the standard simplex for notational
  simplicity.
  Let $\delta_i = u_i - u_0$, $1 \leq i \leq k$,
  be a linearly independent basis for $\R^k$ and let $D =
  [\delta_1 ~ \cdots ~ \delta_k] \in \R^{k \times k}$. Define the two
  convex functions $g_1 : \R^k_+ \to \R$ and $g_2 : \R^k_+ \to
  \R$ via
  \begin{equation*}
    g_j(y) = f_j(u_0 + D y) = f_j\left(u_0 + \sum_{i=1}^k \delta_i y_i \right)
    = f_j\left((1 - \ones^Ty) u_0 + \sum_{i=1}^k u_i y_i \right)
  \end{equation*}
  for $j = 1, 2$.  If we define $Y = \{y \in \R^k_+ \mid \ones^T y \le 1\}$,
  then $g_1$ and $g_2$ are continuous, defined, convex, and
  order equivalent on $Y$. We make one further reduction.
  Let $e_i \in \R^k$ for $1 \leq i \leq k$ be the standard basis for $\R^k$
  and $e_0 = \zeros$ be shorthand for the all-zeros vector. Further, let
  $\ecenter = \frac{1}{k+1} \sum_{i=0}^k e_i$ be the centroid of $Y$ (so $Y
  = \conv\{e_0, \ldots, e_k\}$).  Lemma~\ref{lemma:make-same-on-basis}
  guarantees the existence of $a > 0, b\in \R^k$, and $c \in \R$ such that
  \begin{equation*}
    g_1(y) = a g_2(y) + b^T y + c
    ~~ \mbox{for} ~ y \in \{e_0, e_1, \ldots, e_k, \ecenter\}.
  \end{equation*}
  Now, let $h_1(y) = g_1(y)$ and $h_2(y) = a g_2(y) + b^T y + c$, so $h_1$
  and $h_2$ are convex, order equivalent on $Y$, and satisfy $h_1(y) =
  h_2(y)$ for $y \in \{e_0, \ldots, e_k ,\ecenter\}$. We have reduced
  Lemma~\ref{lemma:equivalence-on-simplex} to showing that
  \begin{equation}
    h_1(y) = h_2(y) ~~ \mbox{for~} y \in Y = \{y \in \R^k_+ \mid
    \ones^Ty \le 1\}.
    \label{eqn:h-equal-goody}
  \end{equation}

  We divide our discussion into two cases.
  
  \textbf{Linear case:} Suppose that $h_1(\ecenter) =
  \frac{1}{k+1}\sum_{i=0}^k h_1(e_i)$.
  Then by order equivalence of $h_1$ and
  $h_2$ (Eq.~\eqref{eqn:order-equivalent-equality}) we have
  $h_2(\ecenter) = \frac{1}{k+1} \sum_{i=0}^{k} h_2(e_i)$.
  Lemma~\ref{lemma:center-makes-linear} thus implies that $h_1$ and $h_2$ are
  linear on $Y = \conv\{e_0, \ldots, e_k\}$, equal on
  the vertices of $Y$, and hence equal on its interior.

  \textbf{Nonlinear case:} By convexity we have
  $h_1(\ecenter) < \frac{1}{k+1} \sum_{i=0}^k h_1(e_i)$, and order equivalence
  (Lemma~\ref{lemma:order-equivalence-rationals}) implies
  $h_2(\ecenter) < \frac{1}{k+1} \sum_{i=0}^k h_2(e_i)$.  For $y \in Y =
  \conv\{e_0, \ldots, e_k\}$, we use $y_0 = 1 - \ones^T y$ for shorthand,
  so we may write $y = \sum_{i=0}^k y_i e_i$ and have $\sum_{i=0}^k y_i =
  1$, so $[y_0 ~ y_1 ~ \cdots ~ y_k]^T \in \simplex_{k+1}$.  Now,
  fix an
  arbitrary $y \in Y \cap \Q^k$. We would like to show that $h_1(y) = h_2(y)$.
  To that end we consider consider the gaps due convexity of $h_j$ from
  $\ecenter$ to the values of $h_j$ at $e_i$ relative to that from $h_j(y)$ to
  $h_j(e_i)$, defining the linear functions $\varphi_j : [0, 1] \to \R$,
  $j = 1, 2$,
  by
  \begin{equation*}
    \varphi_j(r)
    \defeq
    (1 - r)\left[h_j(\ecenter)
      - \frac{1}{k + 1} \sum_{i = 0}^k h_j(e_i)\right]
    + r \left[\sum_{i = 0}^k y_i h_j(e_i)
      - h_j(y) \right].
  \end{equation*}
  Then we have
  \begin{equation*}
    \varphi_j(0) = h_j(\ecenter) - \frac{1}{k+1} \sum_{i=0}^k h_j(e_i) < 0
  \end{equation*}
  by assumption, and by convexity,
  \begin{equation*}
    \varphi_j(1) = \sum_{i = 0}^k y_i h_j(e_i) - h_j(y) \ge 0.
  \end{equation*}
  The key is that the order equivalence of $h_1$ and $h_2$ on
  $Y$ implies that
  \begin{equation}
    \label{eqn:equal-signs-super}
    \sign(\varphi_1(r)) = \sign(\varphi_2(r))
    ~~ \mbox{for~} r \in [0, 1],
  \end{equation}
  so that $\varphi_1$ and $\varphi_2$ have the same zero crossing
  $r\opt > 0$, i.e.\
  there exists $0 < r\opt \le 1$ with $\varphi_1(r\opt) = \varphi_2(r\opt)
  = 0$. (We prove equality~\eqref{eqn:equal-signs-super} presently.)
  At this $r\opt > 0$, we find
  \begin{align*}
    0 & = \varphi_1(r\opt) - \varphi_2(r\opt)
    = -r\opt h_1(y) + r\opt h_2(y),
  \end{align*}
  where we have used that $h_1(e_i) = h_2(e_i)$ for $i = 0, \ldots, k$
  and $h_1(\ecenter) = h_2(\ecenter)$. 
  That is, $h_1(y) = h_2(y)$, and as $y \in Y \cap \Q^k$ is arbitrary and
  $\Q^k$ is dense in $\R^k$,
  Lemma~\ref{lemma:equal-functions} extends the equality $h_1 \equiv h_2$
  to all of $Y$.
  That is, expression~\eqref{eqn:h-equal-goody} holds.

  Returning to the sign equivalence~\eqref{eqn:equal-signs-super},
  for $r > 0$, we may divide $\varphi_j(r)$ by $r$
  and
  $\varphi_j(r) \le 0$ if and only if
  \begin{equation*}
    \frac{1 - r}{r} \left[h_j(\ecenter)
      - \frac{1}{k+1}
      \sum_{i = 0}^k h_j(e_i)\right]
    + \sum_{i = 0}^k y_i h_j(e_i) \le h_j(y).
  \end{equation*}
  Defining
  $\alpha_i = y_i - \frac{1 - r}{r(k + 1)} \in \Q$
  for $i = 0, \ldots, k$ and $\alpha_{k+1} = \frac{1-r}{r}$,
  the inequality $\varphi_j(r) \le 0$ is evidently equivalent to
  $\sum_{i=0}^k \alpha_i h_j(e_i) + \alpha_{k+1} h_j(\ecenter) \le h_j(y)$.
  A calculation shows that $\ones^T\alpha = 1$ and
  $\sum_{i = 0}^k \alpha_i e_i + \alpha_{k + 1} \ecenter
  = y$.
  Applying Lemma~\ref{lemma:order-equivalence-rationals} immediately yields
  that $\varphi_1(r) \le 0$ if and only if $\varphi_2(r) \le 0$ for all $r \in
  \openleft{0}{1} \cap \Q$.  Noting that $\varphi_1(0) < 0$ and $\varphi_2(0) <
  0$, we obtain the sign equality~\eqref{eqn:equal-signs-super}.
\end{proof}
}


\section{Proofs for Bayes Consistency of Empirical Risk Minimization}

\subsection{Proof of Lemma~\ref{lemma:fisher-gap}}
\label{appendix:proof-fisher-gap}

With the definition $\risk\opt(\quant) \defeq
\inf_{\discfunc} \risk(\discfunc \mid \quant)$,
we have
\begin{equation}
  \label{eqn:expansion-iad}
  \risk(\discfunc \mid \quant)
  - \inf_{\discfunc \in \Discfuncs, \quant \in \quantfam}
  \risk(\discfunc \mid \quant)
  = \risk(\discfunc \mid \quant)
  - \risk\opt(\quant) + \risk\opt(\quant)
  - \inf_{\quant \in \quantfam} \risk\opt(\quant).
\end{equation}
For the second two terms, we note that by
Theorem~\ref{theorem:loss-equivalent-entropy}, we have
\begin{equation*}
  \risk\opt(\quant) =
  \E[\entropy_{\wzoloss}(\wt{\prior}(\quant(X)))] = a
  \E[\entropy_\loss(\wt{\prior}(\quant(X)))] + b^T \prior + c,
\end{equation*}
and similarly for $\inf_\quant \risk\opt(\quant)$, so that
\begin{align*}
  \risk\opt(\quant) - \inf_{\quant \in \quantfam} \risk\opt(\quant)
  & = a \left[\surrrisk\opt(\quant) - \inf_{\quant \in \quantfam}
    \surrrisk\opt(\quant)\right] \\
  & \le a \left[\surrrisk(\discfunc \mid \quant) -
    \inf_{\discfunc \in \Discfuncs, \quant \in \quantfam}
    \surrrisk(\discfunc \mid \quant)\right].
\end{align*}
Clearly $t \mapsto a t$ is concave, so that it remains
to bound $\risk(\discfunc \mid \quant) - \risk\opt(\quant)$
in expression~\eqref{eqn:expansion-iad}.
To that end, let $\loss(\alpha) \in \R^k$ denote the
vector of losses $\loss(\alpha, y)$ and define the function
\begin{equation*}
  H(\epsilon)
  = \inf_{\prior \in \simplex_k, \alpha}
  \left\{\sup_{\alpha'}
  \prior^T\left(\loss(\alpha) - \loss(\alpha')\right)
  \mid \sup_{\alpha'}
  \prior^T \left(\wzoloss(\alpha)
  - \wzoloss(\alpha')\right)
  \ge \epsilon \right\}
\end{equation*}
and let $H^{**}$ be its Fenchel biconjugate. Then (see \cite[Proposition 25
  and Corollary 26]{Zhang04a}, as well as the papers~\cite[Proposition
  1]{TewariBa07,Steinwart07,DuchiMaJo13}) we have that $H^{**}(\epsilon) >
0$ for all $\epsilon > 0$, and defining $g(\epsilon) = \sup\{\delta : \delta
\ge 0, H^{**}(\delta) \le \epsilon\}$ yields the desired concave function by
taking $h(t) = g(t) + at$. In passing, we note that we may w.l.o.g.\ replace
$h(t)$ with $\min\{h(t), \max_{ij} c_{ij}\}$.

Now we give the second result. Without loss of generality, we may assume
that the vector $\discfunc(z) \in \R^k$ has a unique maximal coordinate;
we may otherwise assume a deterministic rule for breaking ties. Let
$y(\discfunc(z)) = \argmax_j \discfunc_j(z)$, assumed w.l.o.g.\ to be
unique. Consider that
\begin{align*}
  \risk(\discfunc \mid \quant) - \risk\opt(\quant)
  & = \int \sup_\alpha \sum_{i = 1}^k \left[\wt{\prior}_i(z)
  \wzoloss(\discfunc(z), i)
  - \wzoloss(\alpha, i)\right] d\wb{P}^\quant(z)
\end{align*}
where $\bar{P}^\quant(A) = \sum_{i=1}^k \prior_i P_i(\quant^{-1}(A))$
(for measurable $A \subset \mc{Z}$)
is the push-forward of $\quant$ and
$\wt{\prior}(z) = [\prior_i dP_i^\quant(z)]_{i=1}^k / d\wb{P}^\quant(z)$
is shorthand for the posterior of $Y$ conditional on observing
$z \in \quant(\mc{X})$.  Lemma~\ref{lemma:hinge-great-gap}
immediately implies
\begin{align*}
  \risk(\discfunc \mid \quant) - \risk\opt(\quant)
  & \le 
  \int \sup_\alpha \sum_{i = 1}^k \left[\wt{\prior}_i(z)
  \loss(\discfunc(z), i)
  - \loss(\alpha, i)\right] d\wb{P}^\quant(z) \\
  & = \risk_\loss(\discfunc \mid \quant)
  - \risk_\loss\opt(\quant)
\end{align*}
for either of the hinge-type losses, e.g.\ $\loss(\alpha, y) = \sum_{i =
  1}^k c_{yi} \hinge{1 + \alpha_i - \alpha_y}$.
In the notation of the previous (general) case, we have
$a = 1/k$ because $\entropy_\loss(\prior) = \frac{1}{k}
\entropy_{\wzoloss}(\prior)$, whence we may take $h(t) = (1 + 1/k) t$.

\subsection{Proof of Theorem~\ref{theorem:bayes-consistency}}
\label{appendix:proof-bayes-consistency}

The proof of Theorem~\ref{theorem:bayes-consistency} is almost immediate
from Lemma~\ref{lemma:fisher-gap}.
Indeed,
by
Lemma~\ref{lemma:fisher-gap}, we have
\begin{equation*}
  \risk(\discfunc_n \mid \quant_n)
  - \risk\opt(\quantfam)
  \le h\left(\risk_\loss(\discfunc_n \mid \quant_n)
  - \risk_\loss\opt(\quantfam)\right)
\end{equation*}
for some $h$ concave, bounded, and satisfying $h(0) = 0$.  It is thus
sufficient to show that $\E[\risk_\loss(\discfunc_n \mid \quant_n) -
  \risk_\loss\opt(\quantfam)] \to 0$.  Now, if we let $\discfunc_n\opt$
and $\quant_n\opt$ minimize $\risk_\loss(\discfunc \mid \quant)$ over the
set $\Discfuncs_n \times \quantfam_n$ (or be arbitrarily close to
minimizing the $\loss$-risk), we have
\begin{align*}
  \lefteqn{\risk_\loss(\discfunc_n \mid \quant_n)
    - \risk_\loss(\discfunc_n\opt \mid \quant_n\opt)} \\
  & = \risk_\loss(\discfunc_n \mid \quant_n)
  - \what{\risk}_{\loss,n}(\discfunc_n \mid \quant_n)
  + \what{\risk}_{\loss,n}(\discfunc_n \mid \quant_n)
  - \risk_\loss(\discfunc_n\opt \mid \quant_n \opt) \\
  & \le
  \underbrace{\risk_\loss(\discfunc_n \mid \quant_n)
    - \what{\risk}_{\loss,n}(\discfunc_n \mid \quant_n)
    + \what{\risk}_{\loss,n}(\discfunc_n\opt \mid \quant_n \opt)
    - \risk_\loss(\discfunc_n\opt \mid \quant_n \opt)}_{
    \le 2 \sup_{\discfunc \in \Discfuncs_n, \quant \in \quantfam_n}
    |\what{\risk}_{\loss,n}(\discfunc \mid \quant)
    - \risk_\loss(\discfunc \mid \quant)|} \\
  & \qquad ~  + \what{\risk}_{\loss,n}(\discfunc_n \mid \quant_n)
  - \inf_{\discfunc \in \Discfuncs_n,
    \quant \in \quantfam_n} \what{\risk}_{\loss,n}(\discfunc
  \mid \quant).
\end{align*}
Consequently, we have the expectation bound
\begin{align*}
  \lefteqn{\E\left[\risk_\loss(\discfunc_n \mid \quant_n)
      - \risk_\loss\opt(\quantfam)\right]} \\
  & \le 
  \E\left[\risk_\loss(\discfunc_n \mid \quant_n)
    - \risk_\loss(\discfunc_n\opt \mid \quant_n \opt)\right]
  + \risk_\loss(\discfunc_n\opt \mid \quant_n \opt)
  - \risk_\loss\opt(\quantfam)
  \le 2\epsestimate_n + \epsopt_n
  + \epsapprox_n,
\end{align*}
which converges to zero as desired.


\section{Proofs of basic properties of $f$-divergences}
\label{sec:fdiv-proofs}

In this section, we collect the proofs of the characterizations of
generalized $f$-divergences (Def.~\ref{def:general-fdiv}).

\subsection{Proof of Lemma ~\ref{lemma:fdiv-mu-fine}}
\label{sec:proof-fdiv-mu-fine}

Let $\mu_1$ and $\mu_2$ be
dominating measures; then $\mu = \mu_1 + \mu_2$ also dominates
$P_1, \ldots, P_k$ as well as $\mu_1$ and $\mu_2$. We have
for $\nu = \mu_1$ or $\nu = \mu_2$ that
\begin{equation*}
  \frac{dP_i / d\nu}{
    dP_k / d\nu}
  = \frac{dP_i / d \nu}{dP_k / d\nu} \frac{d\nu / d\mu}{d\nu / d\mu}
  = \frac{dP_i / d\mu}{dP_k / d \mu}
  ~~ \mbox{and} ~~
  \frac{dP_i}{d\nu} \frac{d\nu}{d\mu}
  = \frac{dP_i}{d\mu},
\end{equation*}
the latter two equalities holding $\mu$-almost surely by definition of the
Radon-Nikodym derivative. Thus we obtain for $\nu = \mu_1$ or $\nu = \mu_2$
that
\begin{align*}
  \int f\left(\frac{dP_{1:k-1} / d\nu}{dP_k / d\nu}\right)
  \frac{dP_k}{d\nu} d\nu
  & = \int f\left(\frac{dP_1 / d\nu}{dP_k / d\nu},
  \ldots, \frac{dP_{k-1} / d\nu}{dP_k / d\nu}
  \right) \frac{dP_k}{d\nu} \frac{d\nu}{d\mu} d\mu \\
  & = \int f\left(\frac{dP_1 / d\mu}{dP_k / d\mu},
  \ldots, \frac{dP_{k-1} / d\mu}{dP_k / d\mu}\right)
  \frac{dP_k}{d\mu} d\mu,
\end{align*}
again by definition of the Radon-Nikodym derivative. We have
$\frac{p_i}{p_k} = \frac{dP_i / d\mu}{dP_k / d\mu}$ a.s.-$\mu$,
which shows that the base measure $\mu$ does not affect the integral.

To see the positivity, we may take $\mu = \frac{1}{k} \sum_{i=1}^k P_i$, in
which case Jensen's inequality implies (the
perspective function~\eqref{eqn:closed-perspective} is convex) that
\begin{align*}
  \fdiv{P_1, \ldots, P_{k-1}}{P_k}
  & = \E \left[f\left(\frac{dP_1}{dP_k}(X),
    \ldots, \frac{dP_{k-1}}{dP_k}(X) \right) dP_k(X)\right] \\
  & \ge f\left(\frac{\E[dP_1]}{\E[dP_k]},
  \ldots, \frac{\E[dP_{k-1}]}{\E[dP_k]}\right) \E[dP_k]
  = f(\ones) = 0,
\end{align*}
where the expectation is taken under the distribution $\mu$. The
inequality is strict for $f$ strictly convex at $\ones$ as long as $dP_i /
dP_k$ is non-constant for some $i$, meaning that there exists an $i$ such
that $P_i \neq P_k$.

\subsection{Proof of Proposition~\ref{proposition:fdiv-sup}}
\label{sec:proof-fdiv-sup}

Before proving the proposition, we first establish a 
more general continuity result for $f$-divergences. 
This result is a direct generalization of
results of \cite[Thm.~5]{Vajda72}.
Given a sub-$\sigma$-algebra $\mc{G} \subset \mc{F}$,
we let $P^{\mc{G}}$ denote the restriction of the measure $P$, defined
on $\mc{F}$, to $\mc{G}$. 
\begin{lemma}
  \label{lemma:filtration-limits}
  Let $\mc{F}_1 \subset \mc{F}_2 \subset \ldots$ be a sequence of
  sub-$\sigma$-algebras of $\mc{F}$ and let $\mc{F}_\infty
  = \sigma(\cup_{n \ge 1} \mc{F}_n)$. Then
  $\fdiv{P_1^{\mc{F}_n}, \ldots, P_{k-1}^{\mc{F}_n}}{P_k^{\mc{F}_n}}$
  is non-decreasing in $n$ and
  \begin{equation*}
    \lim_{n \to \infty} \fdiv{P_1^{\mc{F}_n}, \ldots, P_{k-1}^{\mc{F}_n}}{
      P_k^{\mc{F}_n}}
    =  \fdiv{P_1^{\mc{F}_\infty}, \ldots,
      P_{k-1}^{\mc{F}_\infty}}{P_k^{\mc{F}_\infty}}.
  \end{equation*}
\end{lemma}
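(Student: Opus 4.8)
The plan is to reduce everything to the classical martingale convergence theorem applied to likelihood-ratio processes, exactly as in the binary case of \citet[Theorem 5]{Vajda72}. First I would fix a convenient dominating measure. Since all the $P_i$ are defined on $\mc{F}$ and we only ever restrict them, I take $\mu = \frac{1}{k}\sum_{i=1}^k P_i$ on $\mc{F}$; by Lemma~\ref{lemma:fdiv-mu-fine} the value of each divergence in the statement is unchanged if we compute it with the restriction $\mu^{\mc{F}_n}$ (resp.\ $\mu^{\mc{F}_\infty}$) as the base measure. Write $p_i^{(n)} = dP_i^{\mc{F}_n}/d\mu^{\mc{F}_n}$, which is a version of the conditional expectation $\E_\mu[dP_i/d\mu \mid \mc{F}_n]$, and note $\sum_i p_i^{(n)} = k$ a.s. The key observation is that for each fixed $i$ the process $(p_i^{(n)})_{n\ge 1}$ is a nonnegative $(\mc{F}_n,\mu)$-martingale, bounded by $k$, hence uniformly integrable, so $p_i^{(n)} \to p_i^{(\infty)} = dP_i^{\mc{F}_\infty}/d\mu^{\mc{F}_\infty}$ both $\mu$-a.s.\ and in $L^1(\mu)$.

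For the monotonicity chain, I would invoke the data-processing inequality already proved: restriction to a coarser sub-$\sigma$-algebra $\mc{F}_n \subset \mc{F}_{n+1}$ is realized by a deterministic Markov kernel (the identity viewed as a map between the two $\sigma$-algebras), so Proposition~\ref{proposition:data-processing} gives
\[
  \fdiv{P_1^{\mc{F}_n},\ldots,P_{k-1}^{\mc{F}_n}}{P_k^{\mc{F}_n}}
  \le \fdiv{P_1^{\mc{F}_{n+1}},\ldots,P_{k-1}^{\mc{F}_{n+1}}}{P_k^{\mc{F}_{n+1}}},
\]
and likewise each term is dominated by the $\mc{F}_\infty$ divergence. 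In particular $\lim_n$ of the left side exists in $[0,\infty]$ and is $\le$ the $\mc{F}_\infty$ value; what remains is the reverse inequality, i.e.\ lower semicontinuity cannot lose mass in the limit.

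For that I would split according to whether $\fdiv{P_1^{\mc{F}_\infty},\ldots}{P_k^{\mc{F}_\infty}}$ is finite. Write $\wt f$ for the closed perspective~\eqref{eqn:closed-perspective} of $f$, so that (using the sum-normalization $\sum_i p_i^{(n)} = k$) each divergence equals $\int \wt f(p_1^{(n)},\ldots,p_{k-1}^{(n)},p_k^{(n)})\,d\mu$, an integral of a fixed closed convex function composed with a vector martingale. Since $\wt f$ is closed convex, $(\wt f(p_1^{(n)},\ldots,p_k^{(n)}))_n$ is itself a submartingale (by the conditional Jensen inequality for the perspective, exactly the argument used in Proposition~\ref{proposition:data-processing}); combined with a.s.\ convergence $p_i^{(n)}\to p_i^{(\infty)}$ and lower semicontinuity of $\wt f$, Fatou's lemma gives
\[
  \fdiv{P_1^{\mc{F}_\infty},\ldots}{P_k^{\mc{F}_\infty}}
  = \int \wt f(p_1^{(\infty)},\ldots,p_k^{(\infty)})\,d\mu
  \le \liminf_n \int \wt f(p_1^{(n)},\ldots,p_k^{(n)})\,d\mu,
\]
which is the missing inequality; together with the monotone chain this forces the monotone limit to equal the $\mc{F}_\infty$ value and completes the proof. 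The one genuine subtlety — which I expect to be the main obstacle — is handling the perspective function on the boundary $\{p_k = 0\}$ and the passage to $\liminf$ there: one must check that the a.s.\ limit of $p_i^{(n)}/p_k^{(n)}$ times $p_k^{(n)}$ behaves correctly even when $p_k^{(\infty)}$ vanishes on a set of positive $\mu$-measure, which is precisely why working with the jointly-closed-convex perspective $\wt f(t,u)$ rather than with the ratio form $uf(t/u)$ is essential; this is also the place where the care about measurability alluded to in the main text is needed, and where I would mirror Vajda's treatment most closely.
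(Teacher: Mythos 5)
Your proposal is correct and rests on the same two pillars as the paper's proof: the vector of restricted densities $p^{(n)}=dP^{\mc{F}_n}/d\mu^{\mc{F}_n}$ is a bounded martingale under $\mu=\frac{1}{k}\sum_i P_i$, and the divergence is the $\mu$-integral of a closed convex perspective applied to it. The packaging differs in two places. For monotonicity, the paper observes directly that $g(V^n)$ is a submartingale (conditional Jensen applied to the convex perspective $g$), whereas you invoke Proposition~\ref{proposition:data-processing}; note that, as stated, that proposition concerns Markov kernels from $\mc{X}$ to $\mc{Z}$, not restriction to a sub-$\sigma$-algebra, so a literal citation does not quite apply---either argue via conditional Jensen as the paper does, or note that by Proposition~\ref{proposition:fdiv-sup} the $\mc{F}_n$-divergence is a supremum over finite $\mc{F}_n$-measurable partitions, and these supremum sets are nested in $n$. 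For the limit, the paper appeals to Doob's submartingale convergence theorem for the scalar process $g(V^n)-\inf g$, while you use a.s.\ convergence of the vector martingale (L\'evy upward), lower semicontinuity of the closed perspective, and Fatou (with the affine-minorant lower bound guaranteeing integrability from below), sandwiched against the monotone upper bound by the $\mc{F}_\infty$ divergence. Your route is arguably the more transparent one for the identification $\lim_n\E[g(V^n)]=\E[g(V^\infty)]$, since it makes explicit both the identification of $V^\infty$ with the $\mc{F}_\infty$-densities and the two inequalities that force equality; the paper's Doob-based statement compresses these steps. Your closing remark about handling $\{p_k^{(\infty)}=0\}$ through the jointly closed perspective rather than the ratio form is exactly the right caution and matches the paper's convention~\eqref{eqn:closed-perspective}.
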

\begin{proof}
  Define the measure
  $\nu = \frac{1}{k} \sum_{i=1}^k P_i$ and vectors
  $V^n$ via the Radon-Nikodym derivatives
  \begin{equation*}
    V^n = \frac{1}{k} \cdot \left( \frac{dP_1^{\mc{F}_n}}{d\nu^{\mc{F}_n}},
    \frac{dP_2^{\mc{F}_n}}{d\nu^{\mc{F}_n}},
    \ldots, \frac{dP_{k-1}^{\mc{F}_n}}{d\nu^{\mc{F}_n}}\right).
  \end{equation*}
  Then $(1 - \ones^T V^n) d\nu^{\mc{F}_n} = \frac{1}{k}dP_k^{\mc{F}_n}$, and
  $V^n$ is a martingale adapted to the filtration
  $\mc{F}_n$ by standard properties of conditional expectation (under
  the measure $\nu$).
  Letting  $C_k = \{v \in \R^{k-1}_+ \mid \ones^Tv \le 1\}$,
  we define $g : C_k \to \R$ by
  \begin{equation*}
    g(v) = f\left(\frac{v}{1 - \ones^T v}\right) (1 - \ones^Tv).
  \end{equation*}
  We see that $g$ is convex (it is a perspective function), and we have
  \begin{equation*}
    \E_\nu[g(V^n)] =
    \int f\left(\frac{dP_{1:k-1}^{\mc{F}_n}}{
      dP_k^{\mc{F}_n}}
    \right)\left(1 - \frac{1}{k}\sum_{i=1}^{k-1} \frac{dP_i^{\mc{F}_n}}{
      d\nu^{\mc{F}_n}}\right) d\nu
    = \frac{1}{k}
    \fdiv{P_{1:k-1}^{\mc{F}_n}}{P_k^{\mc{F}_n}}.
  \end{equation*}
  Because $V^n \in C_k$ for all $n$, we see that $g(V^n)$ is a
  submartingale. This gives the first result of the lemma,
  that $\fdivs{P_{1:k-1}^{\mc{F}_n}}{P_k^{\mc{F}_n}}$ is non-decreasing in $n$.

  Now,
  assume that the limit in the second statement is finite, as otherwise the
  result is trivial. Using $f(\ones) = 0$, we have by convexity that
  for $v \in C_k$,
  \begin{align*}
    g(v) = f\left(\frac{v}{1 - \ones^Tv}\right)(1 - \ones^Tv)
    + f(1) \ones^Tv
    & \ge f\left(v + 1 (\ones^Tv)\right) \\
    & \ge \inf_{v \in C_k} f(v + 1(\ones^T v)) > -\infty,
  \end{align*}
  the final inequality a consequence of the fact that $f$ is closed
  and hence attains its infimum. In particular,
  the sequence $g(V^n) - \inf_{v \in C_k} g(v)$ is a non-negative
  submartingale, and thus
  \begin{equation*}
    \sup_n \E_\nu\left[ |g(V^n)  - \inf_{v \in C_k} g(v)|\right]
    = \lim_n \E_\nu\left[ g(V^n) - \inf_{v \in C_k} g(v)\right]
    < \infty.
  \end{equation*}
  With this integrability guarantee, Doob's second martingale
  convergence theorem~\cite[Thm.~35.5]{Billingsley86} yields the
  existence of a vector $V^\infty \in \mc{F}_\infty$ such that
  \begin{equation*}
    0 \le \lim_n \E_\nu\left[g(V^n) - \inf_{v \in C_k} g(v)\right]
    = \E_\nu[g(V^\infty)] - \inf_{v \in C_k} g(v)
    < \infty.
  \end{equation*}
  That $\inf_{v \in C_k} g(v) > -\infty$ implies
  $\E_\nu[|g(V^\infty)|] < \infty$, giving the lemma.
\end{proof}

We now give the proof of Proposition~\ref{proposition:fdiv-sup} proper.
Let the the base measure $\mu = \frac{1}{k} \sum_{i=1}^k P_i$ and let $p_i
= \frac{dP_i}{d\mu}$ be the associated densities of the $P_i$. Define the
increasing sequence of partitions $\mc{P}^n$ of $\mc{X}$ by sets
$A_{\alpha n}$ for vectors $\alpha = (\alpha_1, \ldots, \alpha_{k-1})$ and
$B$ with
\begin{equation*}
  A_{\alpha n} = \left\{x \in \mc{X} \mid
  \frac{\alpha_j - 1}{2^n} \le \frac{p_j(x)}{p_k(x)}
  < \frac{\alpha_j}{2^n} ~ \mbox{for~} j = 1, \ldots, k-1\right\}
\end{equation*}
where we let each $\alpha_j$ range over $\{-n 2^n, -n 2^n + 1, \ldots, n
2^n\}$, and define and $B = (\cup_\alpha A_{\alpha n})^c = \mc{X}
\setminus \cup_\alpha A_{\alpha n}$.  Then we
have
\begin{equation*}
  \left(\frac{p_1(x)}{p_k(x)},
  \ldots, \frac{p_{k-1}(x)}{p_k(x)}\right)
  = \lim_{n \to \infty} \sum_{\alpha \in \{-n2^n, \ldots, n2^n\}^{k-1}}
  \frac{\alpha}{2^n}
  \characteristic{A_{\alpha n}}(x) + (n, \ldots, n) \characteristic{B}(x),
\end{equation*}
where $\characteristic{A}$ denotes the $\{0,1\}$-valued characteristic
function of the set $A$.  Each term on the right-hand-side of the previous
display is $\mc{F}_n$-measurable, where $\mc{F}_n$ denotes the
sub-$\sigma$-field generated by the partition $\mc{P}^n$. Defining
$\mc{F}_\infty = \sigma(\cup_{n \ge 1} \mc{F}_n)$, we have
\begin{align*}
  \fdiv{P_{1:k-1}}{P_k \mid \mc{P}^n}
  & = \fdiv{P_1^{\mc{F}_n}, \ldots, P_{k-1}^{\mc{F}_n}}{P_k^{\mc{F}_n}} \\
  & \mathop{\rightarrow}_{n} 
  \fdiv{P_1^{\mc{F}_\infty}, \ldots, P_{k-1}^{\mc{F}_\infty}}{
    P_k^{\mc{F}_\infty}}
  = \fdiv{P_1, \ldots, P_{k-1}}{P_k},
\end{align*}
where the limiting operation follows by Lemma~\ref{lemma:filtration-limits}
and the final equality because of the measurability containment
$(\frac{p_1}{p_k}, \ldots, \frac{p_{k-1}}{p_k}) \in \mc{F}_\infty$.

\subsection{Proof of Proposition~\ref{proposition:data-processing}}
\label{sec:proof-data-processing}

Before proving the proposition, we state an inequality that generalizes the
classical log-sum inequality (cf.~\cite[Theorem 2.7.1]{CoverTh06}).
\begin{lemma}
  \label{lemma:general-log-sum}
  Let $f: \R^k_+ \to \extendedR$ be convex. Let $a : \mc{X} \to \R_+$ and
  $b_{i} : \mc{X} \to \R_+$ for $1 \leq i \leq k$ be nonnegative measurable
  functions. Then for any finite measure $\mu$ defined on $\mc{X}$, we have
  \begin{equation*}
    \left(\int a d\mu \right)
    f\left(\frac{\int b_1 d\mu}{\int a d\mu},
    \ldots, \frac{\int b_k d\mu}{\int a d\mu} \right)
    \le
    \int a(x) f\left(\frac{b_{1}(x)}{a(x)}, \ldots,
    \frac{b_{k}(x)}{a(x)}\right) d\mu(x).
  \end{equation*}
\end{lemma}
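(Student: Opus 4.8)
The plan is to reduce the claim to Jensen's inequality applied to the \emph{perspective function} of $f$. Define $g : \R_+^{k+1} \to \extendedR$ by $g(u, t_1, \ldots, t_k) = u\, f(t_1/u, \ldots, t_k/u)$ for $u > 0$, extended to $u = 0$ by its closure exactly as in~\eqref{eqn:closed-perspective}. As recalled in the excerpt (see~\citet[Proposition IV.2.2.2]{HiriartUrrutyLe93}), $g$ is jointly convex; it is moreover positively homogeneous of degree one, since $g(\lambda u, \lambda t) = \lambda u\, f(t/u) = \lambda g(u, t)$ for $\lambda > 0$. Both sides of the claimed inequality may be written through $g$: the right-hand side is $\int g(a(x), b_1(x), \ldots, b_k(x))\, d\mu(x)$, and the left-hand side equals $g\big(\int a\,d\mu, \int b_1\,d\mu, \ldots, \int b_k\,d\mu\big)$ by the definition of $g$ (when $\int a\,d\mu > 0$; the case $\int a\,d\mu = 0$ is governed by the same closure convention).

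First I would dispose of degenerate cases: we may assume $0 < \int a\,d\mu < \infty$ and $\int b_i\,d\mu < \infty$ for each $i$, since otherwise the inequality is immediate from the perspective-closure conventions (and if the right-hand side is $+\infty$ there is nothing to prove). Next I normalize: set $S = \mu(\mc{X}) \in (0,\infty)$, let $\bar\mu = \mu/S$ be a probability measure, and view $W = (a, b_1, \ldots, b_k)$ as a random vector on $(\mc{X}, \bar\mu)$ with finite mean $\E_{\bar\mu}[W] = S^{-1}\big(\int a\,d\mu, \int b_1\,d\mu, \ldots, \int b_k\,d\mu\big)$. Jensen's inequality for the closed convex function $g$ then gives
\[
\frac1S \int a(x)\, f\Big(\frac{b_1(x)}{a(x)}, \ldots, \frac{b_k(x)}{a(x)}\Big)\,d\mu(x) = \E_{\bar\mu}[g(W)] \ge g\big(\E_{\bar\mu}[W]\big) = \frac1S\, g\Big(\int a\,d\mu, \ldots, \int b_k\,d\mu\Big),
\]
where the last equality uses positive homogeneity of $g$. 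Multiplying through by $S$ and rewriting $g\big(\int a\,d\mu, \ldots, \int b_k\,d\mu\big) = \big(\int a\,d\mu\big)\, f\big((\int b_1\,d\mu)/(\int a\,d\mu), \ldots, (\int b_k\,d\mu)/(\int a\,d\mu)\big)$ yields exactly the stated inequality.

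The only real subtlety — and the step I would be most careful about — is matching the convention used for the integrand when $a(x) = 0$ (where $a(x)\, f(b(x)/a(x))$ must be read via the closed perspective) with the value of $g$ on the hyperplane $\{u = 0\}$, and likewise confirming that Jensen's inequality applies to the extended-real-valued lower semicontinuous convex function $g$ given that $\mu$ is finite and $W$ is integrable. Both points are routine given the discussion of perspective closures around~\eqref{eqn:closed-perspective}, so I do not expect a genuine obstacle; the argument is essentially the classical log-sum inequality (\citet[Theorem 2.7.1]{CoverTh06}) with the scalar convex function replaced by the $k$-dimensional $f$ and its perspective.
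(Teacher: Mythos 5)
Your argument is correct and follows essentially the same route as the paper's proof: the perspective function of $f$ is jointly convex, $\mu/\mu(\mc{X})$ is a probability measure, and Jensen's inequality together with positive homogeneity gives the claim. Your additional care with the degenerate cases and the closure convention at $a(x)=0$ is a welcome refinement, but the underlying idea is identical.
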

\begin{proof}
  Recall that the perspective
  function $g(y, t)$ defined by $g(y, t) = t f(y / t)$ for $t > 0$ is
  jointly convex in $y$ and $t$. The measure $\nu = \mu / \mu(\mc{X})$
  defines a probability measure, so that for $X \sim \nu$,
  Jensen's inequality implies
  \begin{align*}
    \lefteqn{
      \left(\int a d\nu \right)
      f\left(\frac{\int b_1 d\nu}{\int a d\nu},
      \ldots, \frac{\int b_k d\nu}{\int a d\nu} \right)
      \le \E_\nu\left[a(X) f \left(\frac{b_1(X)}{a(X)},
        \ldots, \frac{b_k(X)}{a(X)}\right)\right]} \\
    & \qquad\qquad\qquad\qquad\qquad\qquad~ 
    = \frac{1}{\mu(\mc{X})}
    \int a(x) f\left(\frac{b_{1}(x)}{a(x)}, \ldots,
    \frac{b_{k}(x)}{a(x)}\right) d\mu(x).
  \end{align*}
  Noting that $\int b_i d\nu / \int a d\nu = \int b_i d\mu / \int a d\mu$
  gives the result.
\end{proof}

We use this lemma and Proposition~\ref{proposition:fdiv-sup} to give
Proposition~\ref{proposition:data-processing}.
Proposition~\ref{proposition:fdiv-sup} implies
\begin{align*}
  \lefteqn{\fdiv{Q_{P_1}, \ldots, Q_{P_{k-1}}}{Q_{P_k}}} \\
  & = \sup_{\mc{P}}
  \left\{\fdiv{Q_{P_1}, \ldots, Q_{P_{k-1}}}{Q_{P_k} \mid \mc{P}}
  : \mc{P} ~ \mbox{is~a~finite~partition~of~} \mc{Z}\right\}.
\end{align*}
It is consequently no loss of generality to assume that $\mc{Z}$ is
finite and $\mc{Z} = \{1,2,\ldots,m\}$. Let $\mu = \sum_{i=1}^k P_i$
be a dominating measure and let $p_i = dP_i / d\mu$. Letting
$q(j \mid x) = Q(\{j\} \mid x)$ and
$q_{P_i}(j) = Q_{P_i}(\{j\})$, we obtain
\begin{align*}
  \lefteqn{\fdiv{Q_{P_1}, \ldots, Q_{P_{k-1}}}{Q_{P_k}}
    = \sum_{i = 1}^m q_{P_k}(i) f\left(\frac{q_{P_1}(i)}{q_{P_k}(i)},
    \ldots, \frac{q_{P_{k-1}}(i)}{q_{P_k}(i)}\right)} \\
  & = \sum_{i=1}^{m} \left(\int_{\mc{X}} q(i \mid x) p_k(x) d\mu(x)\right)
  f\left(\frac{\int_{\mc{X}} q(i \mid x)
    [p_1(x), ~ \cdots, ~ p_{k-1}(x)] d\mu(x)}{
    \int_{\mc{X}} q(i \mid x)p_k(x) d\mu(x)}\right) \\
  & \le \sum_{i=1}^{m}
  \int_{\mc{X}} q(i \mid x) f\left(
  \frac{p_1(x)}{p_k(x)}, \ldots, \frac{p_{k-1}(x)}{p_k(x)}
  \right) p_k(x) d\mu(x)
\end{align*}
by Lemma~\ref{lemma:general-log-sum}. Noting that
$\sum_{i=1}^m q(i \mid x) = 1$, we obtain
our desired result.


\fi

\end{document}